\newtheorem{Thm}{Theorem}
\newtheorem{Prop}[Thm]{Proposition}
\newtheorem{Def}[Thm]{Definition}
\newtheorem{Def/Thm}[Thm]{Definition/Theorem}
\newtheorem{Cor}[Thm]{Corollary}
\newtheorem{Lemma}[Thm]{Lemma}
\newtheorem{Rmk}[Thm]{Remark}
\newcommand{\F}{{\mathsf{M}}}
\newcommand{\ot }{\otimes}
\newcommand{\ra }{\rightarrow}
\newcommand{\lra }{\longrightarrow}
\newcommand{\Spec}{{\mathrm{Spec}}}
\newcommand{\lann}{\langle\langle}
\newcommand{\rann}{\rangle\rangle}
\newcommand{\lannn}{\left\langle\left\langle}
\newcommand{\rannn}{\right\rangle\right\rangle}
\newcommand{\G}{{\bf G}}
\newcommand{\PP }{{\mathbb P}}
\newcommand{\QQ }{{\mathbb Q}}
\newcommand{\CC }{{\mathbb C}}
\newcommand{\ZZ }{{\mathbb Z}}
\newcommand{\vir}{\mathrm{vir}}
\newcommand{\DD}{\mathsf{D}}
\newcommand{\T}{{\mathsf{T}}}
\newcommand{\lan}{\langle}
\newcommand{\ran}{\rangle}
\newcommand{\pP}{\mathsf{P}}
\newcommand{\ppl}{{\mathsf{P}}\left[}
\newcommand{\ppr}{\right]}
\def \proj {{\mathbb{P}}}
\newcommand{\com}{{\mathbb{C}}}
\begin{document}

\title[Holomorphic anomaly equations for the formal quintic]
{Holomorphic anomaly equations for \\the formal quintic}

\author{Hyenho Lho}
\address{Department of Mathematics, ETH Z\"urich}
\email {hyenho.lho@math.ethz.ch}
\author{Rahul Pandharipande}
\address{Department of Mathematics, ETH Z\"urich}
\email {rahul@math.ethz.ch}
\date{November 2018}.

\begin{abstract} 
We define a formal Gromov-Witten theory of the quintic 3-fold
via localization on $\proj^4$.
Our main result  is a direct geometric proof of holomorphic anomaly equations
for the formal quintic
in precisely the same form as predicted by B-model physics for the true 
Gromov-Witten theory of the quintic 3-fold. The results suggest that the
 formal quintic and the true quintic theories should be related by transformations which respect the holomorphic anomaly equations. Such a relationship has been
 recently found by Q. Chen, S. Guo, F. Janda, and Y. Ruan via the geometry
 of new moduli spaces.

 \end{abstract}
\maketitle

\setcounter{tocdepth}{1} 
\tableofcontents

\setcounter{section}{-1}

\section{Introduction}

\subsection{GW/SQ}\label{holp} 
%To the A-model Gromov-Witten theory of a Calabi-Yau 3-fold $X$ is 
%conjecturally associated
%the B-model theory of a mirror Calabi-Yau 3-fold ${Y}$.
%At the genus 0 level, much is known about the geometry underlying the
%B-model: Hodge theory, period integrals, and the linear sigma model.
%In higher genus, mathematical techniques have been
%less successful. In toric Calabi-Yau geometries,
%the topological
%recursion
%of Eynard and Orantin provides
%a link to
% the B-model \cite{BKMP,EMO}.
%Another mathematical approach to
%the B-model (without the
%toric hypothesis) 
%has been proposed by Costello and Li \cite{CosLi} 
%following paths suggested by string theory. A very different mathematical view of %the geometry of B-model
%invariants is pursued here.

Let 
$X_5\subset \mathbb{P}^4$
be a nonsingular quintic Calabi-Yau 3-fold.
The moduli space
of stable maps to the quintic
of genus $g$ and degree $d$,
$$\overline{M}_g(X_5,d)\subset \overline{M}_g(\mathbb{P}^4,d)\, ,$$
has virtual dimension 0.
The Gromov-Witten invariants,
%{\footnote{In degree 0, the moduli 
%spaces of maps and stable quotients are both empty for genus 0 and 1,
%so the invariants in these cases vanish.}},
\begin{equation}\label{fredfredgw}
N^{\mathsf{GW}}_{g,d}\, =\,
\langle 1\rangle_{g,d}^{\mathsf{GW}}\,=\, \int_{[\overline{M}_g(X_5,d)]^{vir}} 1\, ,
\end{equation}
have been studied for more than 20 years, 
see \cite{CKatz,FP,Kon} for an
introduction to the subject.

The theory of stable quotients developed in \cite{MOP}
was partially inspired by the question of finding a geometric approach
to a higher genus linear sigma model. 
The moduli space
of stable quotients for the quintic,
$$\overline{Q}_g(X_5,d)\subset \overline{Q}_g(\mathbb{P}^4,d)\, ,$$
was defined in \cite[Section 9]{MOP}.
%The developments since \cite{MOP} have been very encouraging.
The existence of a natural obstruction theory on
$\overline{Q}_g(X_5,d)$
and a virtual
fundamental class ${[\overline{Q}_g(X_5,d)]^{vir}}$ is easily seen{\footnote{For stability,
marked points are required in genus 0 and
positive degree is required in genus 1.}}
in genus 0 and 1.
A proposal in higher genus for the obstruction theory and
virtual class was made in \cite{MOP} and was carried out in
significantly greater generality in the setting of quasimaps in
\cite{CKM}. The associated integral theory is defined by
\begin{equation}
\label{fredfred}
N^{\mathsf{SQ}}_{g,d}
\,=\, \langle 1 \rangle_{g,d}^{\mathsf{SQ}}
\, =\, \int_{
[\overline{Q}_g(X_5,d)]^{vir}} 1\, .
\end{equation}
In genus 0 and 1, the invariants \eqref{fredfred}
 were calculated 
%equal the B-model invariants of the mirror of the quintic 
in \cite{CZ} and \cite{KL} respectively.
The answers on the stable quotient side {\em exactly}
match the string theoretic B-model for the quintic in
genus 0 and 1.

A relationship in every genus between the
Gromov-Witten and stable quotient invariants of
the quintic has been proven by Ciocan-Fontanine
and Kim \cite{CKw}.{\footnote{A second proof (in most cases) can
be found in  \cite{CJR}.}}
Let $H\in H^2(X_5,\mathbb{Z})$ be the hyperplane class of
the quintic,
and let
$$\mathcal{F}_{g,n}^{\mathsf{GW}}(Q)\, =\, \langle \, \underbrace{H,\ldots,H}_{n}  \, \rangle_{g,n}^{\mathsf{GW}} \, =\, 
\sum_{d=0}^\infty  Q^d
\int_{[\overline{M}_{g,n}(X_5,d)]^{vir}} \prod_{i=1}^n
\text{ev}_i^*(H)\,  ,$$
$$\mathcal{F}_{g,n}^{\mathsf{SQ}}(q) \, =\, \langle \, \underbrace{H,\ldots,H}_{n}  \, \rangle_{g,n}^{\mathsf{SQ}} \, =\, 
\sum_{d=0}^\infty  q^d
\int_{[\overline{Q}_{g,n}(X_5,d)]^{vir}} \prod_{i=1}^n
\text{ev}_i^*(H)\, $$
be the Gromov-Witten and stable quotient series
respectively (involving the pointed moduli spaces and the
evaluation morphisms at the markings).
Let 
$$I_0(q)=\sum_{d=0}^\infty q^d \frac{(5d)!}{(d!)^5}\, ,  \ \ \ 
I_1(q)= \log(q)I_0(q) + 5 \sum_{d=1}^\infty q^d \frac{(5d)!}{(d!)^5} 
\left( \sum_{r=d+1}^{5d} \frac{1}{r}\right)\, .
$$
The mirror map is defined by
$$Q(q) = \exp\left(\frac{I_1(q)}{I_0(q)}\right) =
q\cdot \exp\left( \frac{5 \sum_{d=1}^\infty q^d \frac{(5d)!}{(d!)^5} 
\left( \sum_{r=d+1}^{5d} \frac{1}{r}\right)}{\sum_{d=0}^\infty q^d \frac{(5d)!}{(d!)^5}} \right)
\, .$$
The relationship  between the Gromov-Witten and stable quotient
invariants of the quintic in case $$2g-2+n>0$$ is given by the following result
\cite{CKw}:
\begin{equation}\label{345}
\mathcal{F}_{g,n}^{\mathsf{GW}}(Q(q)) =
I_0(q)^{2g-2+n} \cdot \mathcal{F}_{g,n}^{\mathsf{SQ}}(q)\, .
\end{equation}
The transformation \eqref{345} shows the stable quotient
theory matches the string theoretic
B-model series for the quintic $X_5$.

%Based on the above investigation of the quintic, 
%a fundamental relationship between
%the stable quotient invariants and the B-model for all
%local and complete intersection Calabi-Yau 3-folds $X$
%is natural to propose: 
%{\it the stable quotient invariants of 
%$X$ exactly  
%equal the B-model invariants of the mirror $Y$}.

\subsection{Formal quintic invariants}\label{ham}
The (conjectural) holomorphic anomaly equation is a beautiful property
of the string theoretic B-model series which has been used
effectively since \cite{BCOV}. Since the stable quotients
invariants provide a geometric proposal for the B-model series,
we should look for the geometry of the holomorphic
anomaly equation in the moduli space of stable quotients.

 A particular twisted theory
 on $\PP^4$ is
 related to the quintic
 3-fold. Let the algebraic torus
$$\mathsf{T}=(\com^*)^5$$
 act with the standard linearization
 on $\PP^4$ with weights
 $\lambda_0,\ldots,\lambda_4$
 on the vector space  $H^0(\PP^4,\mathcal{O}_{\PP^4}(1))$.
%A formal quintic theory can be defined 
%in all genera
%from the
%hyperplane perspective. 
%via torus equivariant geometry.
%Let the 5-dimensional torus
%$$\mathsf{T}= (\com^*)^5\, $$
%act on the vector space 
%$\mathsf{V}=\com^5$
%with weights $\lambda_0,\ldots, \lambda_4$. The
%induced $\mathsf{T}$-action on 
%$$\mathbb{P}^4=\proj(\mathsf{V})\, $$
%lifts canonically to 
Let
\begin{equation}\label{gred}
\mathsf{C} \rightarrow \overline{M}_g(\mathbb{P}^4,d)
\, , \ \ \ 
f:\mathsf{C}\rightarrow \PP^4
\, , \ \ \ 
\mathsf{S}=f^*\mathcal{O}_{\PP^4}(-1) \rightarrow \mathsf{C}\, 
\end{equation}
be the universal curve,
the universal map,
and the universal bundle over
the moduli space of stable maps
--- all
equipped with canonical $\T$-actions.
%and let
%$$\mathsf{L}=\mathsf{S}^\star\, .$$
%The $\mathsf{T}$-equivariant integrals 
We define the {\em formal quintic invariants} following
\cite{LP1} by{\footnote{The negative
exponent denotes the dual:
$\mathsf{S}$ is a line bundle and $\mathsf{S}^{-5}=(\mathsf{S}^\star)^{\otimes 5}$.}}
\begin{equation}\label{fredfredfred}
\widetilde{N}_{g,d}^{\mathsf{GW}}= \int_{[\overline{M}_g(\mathbb{P}^4,d)]^{vir}} e(R\pi_*(\mathsf{S}^{-5}))\, , \   %[[\lambda_0,\lambda_1, \lambda_2,\lambda_3,\lambda_4]]
\end{equation}
where $e(R\pi_*(\mathsf{S}^{-5}))$
is the equivariant Euler class defined
{\em after} localization. More precisely, on each $\T$-fixed locus of
$\overline{M}_g(\mathbb{P}^4,d)$, both
$$R^0\pi_*(\mathsf{S}^{-5}) \ \ \ \text{and} \ \ \
R^1\pi_*(\mathsf{S}^{-5})$$
are vector bundles with moving weights, so
$$e(R\pi_*(\mathsf{S}^{-5})) = \frac{c_{\text{top}}(R^0\pi_*(\mathsf{S}^{-5}))}
{c_{\text{top}}(R^1\pi_*(\mathsf{S}^{-5}))}$$
is well-defined.
The integral \eqref{fredfredfred} is
homogeneous of degree 0 in localized
equivariant cohomology,
$$\int_{[\overline{M}_g(\mathbb{P}^4,d)]^{vir}} e(R\pi_*(\mathsf{S}^{- 5}))
\, \in \,{\mathbb{Q}}(\lambda_0,\ldots,\lambda_4)\, ,$$
and defines a rational number 
 $\widetilde{N}_{g,d}^{\mathsf{GW}}\in {\mathbb{Q}}$
after the
specialization{\footnote{Since the formal quintic theory is
homogeneous of degreee 0, the specialization $\lambda_i = \zeta^i\lambda_0$ could
also be taken as in \cite{LP1}. However, for other specializations,
we expect the finite generation of Theorem \ref{ooo5} and the holomorphic
anomaly equation of Theorem \ref{ttt5} to take different forms. A study of the
dependence on specialization will appear
in \cite{Lho999}. 
For local $\mathbb{P}^2$
considered in \cite{LP1} and $\mathbb{C}^3/\mathbb{Z}_3$ considered in \cite{LPC3Z3},
the theories are independent of specialization.}}
$$\lambda_i = \zeta^i$$
for a primitive fifth root of unity
$\zeta^5=1$.

%$$\widetilde{N}_{g,d}^{\mathsf{SQ}}\in}
%$\overline{Q}_g(\mathbb{P}^4,d)$ is %%$5d-g+1$ and
%$$
%%c_{\rm{top}}(R\pi_*(\mathsf{L}^5)) = c_{5d-g+1}(R\pi_*(\mathsf{L}^5))\, . $$
%We define{\footnote{The substitution ${\left\{\lambda_k=\exp\left(\frac{2\pi i}{5}k\right)\right\}}$
%is shown to be well-defined in Section \ref{}.}}
%the {\em formal quintic invariants} by
%\begin{equation}\label{fredfredfred}
%\widetilde{N}_{g,d}= \widetilde{N}^{\mathsf{T}}_{g,d} \, \Big|_{\left\{\lambda_k=\exp\left(\frac{2\pi %i}{5}k\right)\right\}}\, .
%\end{equation}
 
Our main result here is  that the   holomorphic anomaly equations
conjectured for the
true quintic theory \eqref{fredfredgw}
are satisfied by the formal 
quintic theory
\eqref{fredfredfred}.
%
%We expect that the formal invariants \eqref{fredfredfred}
%%will be related to the true
%%invariants  \eqref{fredfred}
%of the quintic 3-fold via (at the moment unknown) tranformations.
In particular,
the formal quintic theory  and the true quintic theory
should be related by transformations which respect the
holomorphic anomaly equations. In a recent breakthrough
by Q. Chen, S. Guo, F. Janda, and Y. Ruan, precisely
such a transformation was found via the virtual geometry of
new moduli spaces intertwining the formal and true theories
of the quintic.

\subsection{Holomorphic anomaly for the
formal quintic}

We state here the precise form of the holomorphic anomaly equations for the formal quintic.

Let $H\in H^2(\PP^4,\ZZ)$ be the hyperplane class on $\PP^4$, and let
%\begin{eqnarray*}
%\widetilde{\mathcal{F}}_{g,n}^{\mathsf{GW}}(Q)&=&\lan\lan H^{a_1},\dots,H^{a_n}\ran\ran^{\mathsf{GW}}_{g,n}\\ &=&\sum_{d=0}^{\infty}Q^d \int_{[\overline{M}(\PP^4,d)]^{vir}}e(R\pi_*(\mathsf{S}^{-5}))\prod_{i=1}^{n}\text{ev}^*_i(H^{a_i})\,, \\
%\widetilde{\mathcal{F}}_{g,n}^{\mathsf{SQ}}(Q)&=&\lan\lan H^{a_1},\dots,H^{a_n}\ran\ran^{\mathsf{SQ}}_{g,n}\\&=&\sum_{d=0}^{\infty}Q^d \int_{[\overline{Q}(\PP^4,d)]^{vir}}e(R\pi_*(\mathsf{S}^{-5}))\prod_{i=1}^{n}\text{ev}^*_i(H^{a_i})\,
%\end{eqnarray*}
\begin{eqnarray*}
\widetilde{\mathcal{F}}_{g}^{\mathsf{GW}}(Q)&=& \sum_{d=0}^{\infty}Q^d \int_{[\overline{M}_g(\PP^4,d)]^{vir}}e(R\pi_*(\mathsf{S}^{-5}))\,, \\
\widetilde{\mathcal{F}}_{g}^{\mathsf{SQ}}(Q)&=&\sum_{d=0}^{\infty}Q^d \int_{[\overline{Q}_g(\PP^4,d)]^{vir}}e(R\pi_*(\mathsf{S}^{-5}))\,
\end{eqnarray*}
%\begin{align*}
%    \widetilde{\mathcal{F}}_{g,n}^{\mathsf{GW}}(Q)=\lan\lan H^{a_1},\dots,H^{a_n}\ran\ran^{\mathsf{GW}}_{g,n}=\sum_{d=0}^{\infty}Q^d \int_{[\overline{M}(\PP^4,d)]^{vir}}e(R\pi_*(\mathsf{S}^{-5}))\prod_{i=1}^{n}\text{ev}^*_i(H^{a_i})\,, \\
%    \widetilde{\mathcal{F}}_{g,n}^{\mathsf{SQ}}(Q)=\lan\lan H^{a_1},\dots,H^{a_n}\ran\ran^{\mathsf{SQ}}_{g,n}=\sum_{d=0}^{\infty}Q^d \int_{[\overline{Q}(\PP^4,d)]^{vir}}e(R\pi_*(\mathsf{S}^{-5}))\prod_{i=1}^{n}\text{ev}^*_i(H^{a_i})\,.
%\end{align*}
be the formal Gromov-Witten and formal stable quotient series respectively (involving the evaluation morphisms at the markings). The relationship between the formal Gromov-Witten and formal stable quotient invariants of quintic in case of $2g-2+n > 0$ follows from \cite{CKg}:
\begin{equation}\label{jjed}
\widetilde{\mathcal{F}}_{g}^{\mathsf{GW}}(Q(q)) =
I_0(q)^{2g-2} \cdot \widetilde{\mathcal{F}}_{g}^{\mathsf{SQ}}(q)
%=\mathcal{F}_g^B(q)
\end{equation}
with respect to the true quintic mirror map
$$Q(q) = \exp\left(\frac{I_1(q)}
{I_0(q)}\right)\, = \, 
q\cdot \exp\left( \frac{5 \sum_{d=1}^\infty q^d \frac{(5d)!}{(d!)^5} 
\left( \sum_{r=d+1}^{5d} \frac{1}{r}\right)}{\sum_{d=0}^\infty q^d\frac{(5d)!}{(d!)^5}} \right)
\, .$$

In order to state the holomorphic anomaly equations,
we require several series in $q$. First, let
$$L(q) \, =\,   (1-5^5 q)^{-\frac{1}{5}}\,  = \, 1+625q+117185 q^2 +\ldots\, .$$
Let $\mathsf{D}=q\frac{d}{dq}$, and let
$$C_0(q)= I_0\, , \ \ \ 
C_1(q)= \mathsf{D} \left( \frac{I_1}
{I_0}\right)\, ,$$
where $I_0$ and $I_1$ are the hypergeometric series
appearing in the mirror map for the true quintic theory.
We define
\begin{eqnarray*}
%\, ,\\
%I_0(q)&=&
%1 +  \sum_{d=1}^\infty q^d \frac{(5d)!}{(d!)^5} \, , \\
%I_1(q)&=&
%I_0 \log (q) + 5 \sum_{d=1}^\infty q^d \frac{(5d)!}{(d!)^5} 
%\left( \sum_{r=d+1}^{5d} \frac{1}{r}\right)\, , \\
%%C_0(q)&=& I_0\,  ,\\
%C_1(q)&=& q \frac{d}{dq} \frac{I_1}{I_0}\, ,
%\\
K_2(q)&=& -\frac{1}{L^5} \frac{\DD C_0}{C_0}\,  ,
\\
A_2(q)&=& \frac{1}{L^5}\left( -\frac{1}{5}\frac{\DD C_1}{C_1}-\frac{2}{5}\frac{\DD C_0}{C_0}-\frac{3}{25}\right)\, ,\\
A_4(q) &=& \frac{1}{L^{10}} \Bigg(-\frac{1}{25}\left(
\frac{\DD C_0}{C_0}\right)^2-\frac{1}{25}
\left(\frac{\DD C_0}{C_0}\right)\left(\frac{\DD C_1}{C_1}\right)
\, \\
& &
+\frac{1}{25}\DD\left(\frac{\DD C_0}{C_0}\right)+\frac{2}{25^2} \Bigg)\,  ,\\
A_6(q) &=&\frac{1}{31250L^{15} }\Bigg( 4+125 \DD\left(\frac{\DD C_0}{C_0}\right)
+50\left(\frac{\DD C_0}{C_0}\right)
\left(1+10 \DD \left(\frac{\DD C_0}{C_0}
\right)\right)\,    \\
& & -5L^5\Bigg(1+10\left(\frac{\DD C_0}{C_0}\right)+25
\left(\frac{\DD C_0}{C_0}\right)^2+25\DD
\left(\frac{q\frac{d}{dq}C_0}{C_0}\right)\Bigg)\,  \\
& &+125\DD^2\left(\frac{\DD C_0}{C_0}\right)-125\left(\frac{\DD C_0}{C_0}\right)^2\Bigg(\left(\frac{\DD C_1}{C_1}\right)-1\Bigg) \Bigg)\, .
\end{eqnarray*}
Let $T$ be the standard coordinate mirror to $t=\log(q)$,
\begin{align*} T= \frac{I_1(q)}{I_0(q)}\, .
\end{align*}
Then $Q(q)=\exp(T)$ is the mirror map.

%Denote the subspace of homogeneous elements of $\text{deg}^\star=k$ by $$ \mathsf{V}^\star_k=\mathbb{C}[L^{\pm1}][A_2,A_4,A_6,C_0^{\pm 1},C_1^{-1},K_2]_k\, .$$ 
Define a new series 
$$\widetilde{\mathcal{F}}_{g}^{\mathsf{B}}=I_0^{2g-2}\cdot\widetilde{\mathcal{F}}^{\mathsf{SQ}}_g$$
motivated by \eqref{jjed}. The superscript $\mathsf{B}$ here is for the $B$-model.
Let $$\mathbb{C}[L^{\pm1}][A_2,A_4,A_6,C_0^{\pm 1},C_1^{-1},K_2]$$ 
be the free
polynomial ring over $\mathbb{C}[L^{\pm1}]$.

\begin{Thm} \label{ooo5} For the series 
$\widetilde{\mathcal{F}}_g^{\mathsf{B}}$
associated to the formal quintic, 
\begin{enumerate}
\item[(i)]
$\widetilde{\mathcal{F}}_g^{\mathsf{B}} (q) \in \mathbb{C}[L^{\pm1}][A_2,A_4,A_6,C_0^{\pm 1},C_1^{-1},K_2]$
for $g\geq 2$, \vspace{5pt}

\item[(ii)]
$\frac{\partial^k \widetilde{\mathcal{F}}_g^{\mathsf{B}}}{\partial T^k}(q) \in \mathbb{C}[L^{\pm1}][A_2,A_4,A_6,C_0^{\pm 1},C_1^{-1},K_2]$ for  $g\geq 1$, $k\geq 1$,
\vspace{5pt}

\item[(iii)]
${\frac{\partial^k \widetilde{\mathcal{F}}_g^{\mathsf{B}}}{\partial T^k}}$ is homogeneous 
with respect  to $C_1^{-1}$ 
of degree $k$.
\end{enumerate}
\end{Thm}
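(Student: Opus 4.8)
The plan is to compute $\widetilde{\mathcal{F}}_g^{\mathsf{SQ}}$, and hence $\widetilde{\mathcal{F}}_g^{\mathsf{B}}=I_0^{2g-2}\cdot\widetilde{\mathcal{F}}_g^{\mathsf{SQ}}$ as in \eqref{jjed}, through the Givental--Teleman reconstruction of the cohomological field theory underlying the formal theory of $\PP^4$. After passing to the $\mathsf{T}$-fixed loci and the specialization $\lambda_i=\zeta^i$ the theory is semisimple, so Teleman's classification expresses each series as a \emph{finite} sum over stable graphs $\Gamma$ of genus $g$,
$$\widetilde{\mathcal{F}}_g^{\mathsf{SQ}} \;=\; \sum_{\Gamma}\frac{1}{|\mathrm{Aut}\,\Gamma|}\,\mathrm{Cont}_\Gamma\, ,$$
where each $\mathrm{Cont}_\Gamma$ is assembled from vertex, edge (propagator), and leg factors built out of a fundamental solution $S(z)$ of the quantum differential equation and its associated $R$-matrix $R(z)=\mathrm{Id}+R_1 z+R_2 z^2+\cdots$. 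First I would pin down $S$ and $R$ explicitly: the flat sections are governed by the order-four Picard--Fuchs operator annihilating the hypergeometric series $I_0,I_1$ of the mirror map, so the coefficients $R_i$ and the normalized edge bilinear can be written in closed form in terms of $I_0$, $I_1$, their $\mathsf{D}$-derivatives, and $L=(1-5^5q)^{-1/5}$.

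The heart of the argument is to show that every building block of the graph sum lies in the ring $\mathcal{R}:=\mathbb{C}[L^{\pm1}][A_2,A_4,A_6,C_0^{\pm1},C_1^{-1},K_2]$. A priori the coefficients lie only in the differential ring generated over $\mathbb{C}[L^{\pm1}]$ by $I_0,I_1$ and all their $\mathsf{D}$-derivatives; the content is that they reorganize into the smaller ring $\mathcal{R}$. I would first record the defining identities as substitutions, namely $\frac{\mathsf{D}C_0}{C_0}=-L^5 K_2$ and $\frac{\mathsf{D}C_1}{C_1}=-5L^5A_2+2L^5K_2-\tfrac35$, and observe from $L^{-5}=1-5^5q$ that $\mathsf{D}L=\tfrac15(L^6-L)$. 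Next I would prove that $\mathcal{R}$ is closed under $\mathsf{D}=q\frac{d}{dq}$ by deriving, from the Picard--Fuchs equation, a closed first-order system expressing $\mathsf{D}A_2,\mathsf{D}A_4,\mathsf{D}A_6,\mathsf{D}K_2$ as polynomials in the generators; the specific shapes of $A_4$ and $A_6$ are precisely what is needed for the second- and third-order pieces of the recursion to close. Granting this, the explicit forms of $S$, $R$, and the propagator show that all vertex, edge, and leg factors lie in $\mathcal{R}$, and since for fixed $g\ge2$ only finitely many graphs contribute and each contribution is a finite product of such factors, part (i) follows; one moreover checks that the total $C_1^{-1}$-degree of $\widetilde{\mathcal{F}}_g^{\mathsf{B}}$ is $0$.

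For parts (ii) and (iii) the key observation is that the mirror coordinate satisfies $\mathsf{D}T=\mathsf{D}(I_1/I_0)=C_1$, so that
$$\frac{\partial}{\partial T}\;=\;\frac{1}{C_1}\,\mathsf{D}\, .$$
Because $\mathcal{R}$ is closed under $\mathsf{D}$ and contains $C_1^{-1}$, every $T$-derivative of an element of $\mathcal{R}$ again lies in $\mathcal{R}$; applied to (i) this gives (ii) for $g\ge2$, while for $g=1$ one checks directly from the known genus-one formula that, although $\widetilde{\mathcal{F}}_1^{\mathsf{B}}$ need not itself lie in $\mathcal{R}$, its first and hence all higher $T$-derivatives do. For (iii) I would put the $C_1^{-1}$-grading on $\mathcal{R}$ in which $C_1^{-1}$ has degree $1$ and all other generators degree $0$, and verify that $\mathsf{D}$ preserves it: the generators $L^{\pm1},A_2,A_4,A_6,C_0^{\pm1},K_2$ and, by the closed system above, their $\mathsf{D}$-images have degree $0$, while $\mathsf{D}(C_1^{-1})=-C_1^{-1}\cdot\frac{\mathsf{D}C_1}{C_1}$ stays in degree $1$. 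Hence each application of $\frac{\partial}{\partial T}=C_1^{-1}\mathsf{D}$ raises the $C_1^{-1}$-degree by exactly one, and starting from the degree-$0$ element $\widetilde{\mathcal{F}}_g^{\mathsf{B}}$ yields homogeneity of degree $k$ for $\frac{\partial^k}{\partial T^k}\widetilde{\mathcal{F}}_g^{\mathsf{B}}$.

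The main obstacle I anticipate is the explicit determination of the $R$-matrix and the normalized propagator together with the verification that, after the substitutions above, their coefficients actually land in $\mathcal{R}$ rather than in the larger differential ring generated by $I_0,I_1$. Concretely, one must show that the potentially non-polynomial contributions coming from the fixed-point asymptotic expansions and from the inverse powers of $I_0$ organize into $A_2,A_4,A_6$ exactly; this is where the precise normalizations in their definitions (the powers of $L$ and the rational constants $-\tfrac{3}{25}$, $\tfrac{2}{25^2}$, $\tfrac{4}{31250}$, and so on) are forced rather than chosen. A secondary technical point is establishing closure of $\mathcal{R}$ under $\mathsf{D}$ in genus-independent form, i.e. deriving the full first-order differential system for $A_2,A_4,A_6,K_2$ from the order-four Picard--Fuchs equation.
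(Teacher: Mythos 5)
Your proposal is correct and follows essentially the same route as the paper: a Givental-type fixed-point graph sum whose vertex, edge, and leg factors are controlled by the asymptotic expansion of the $I$-function (the content of Propositions \ref{RPoly}--\ref{RPoly3} and Lemmas \ref{L1}--\ref{L3}), closure of the generator ring under $\mathsf{D}$ via the Picard--Fuchs relations \eqref{drule}--\eqref{drule2}, and the identity $\partial/\partial T=C_1^{-1}\mathsf{D}$ together with the $C_1^{-1}$-grading for parts (ii) and (iii). The only minor discrepancies are that the paper obtains the graph sum by direct virtual localization on $\overline{Q}_g(\PP^4,d)$ combined with the Ciocan-Fontanine--Kim wall-crossing rather than by invoking Teleman's classification, and that the relevant Picard--Fuchs operator for $\overline{\mathds{I}}$ is order five rather than four; the computational verification you defer to the end is exactly what those cited lemmas supply.
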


%\begin{Thm} \label{ooo5} For all %$g\geq 1$,
%$$\frac{\partial %%\widetilde{\mathcal{F}}_g^{\maths%f{SQ}}}{\partial T}(q) \in %\mathbb{C}[L^{\pm1}][A_2,A_4,A_6]\%, , $$
%and for all $g\geq 2$,
%$$\widetilde{\mathcal{F}}_g^{\math%sf{SQ}} (q) \in %%\mathbb{C}[L^{\pm1}][A_2,A_4,A_6]%%\, . $$
%\end{Thm}

We follow here the {\em canonical lift} convention of \cite[Section 0.4]{LP1}.
When we write
$$\widetilde{\mathcal{F}}_g^{\mathsf{B}} (q) \in \mathbb{C}[L^{\pm1}][A_2,A_4,A_6,C_0^{\pm 1},C_1^{-1},K_2]\, , $$
we mean that the series $\widetilde{\mathcal{F}}_g^{\mathsf{B}} (q)$
has a canonical lift to the
free algebra. The question of
uniqueness of the lift has to do
with the algebraic independence of
the series 
$$A_2(q)\, , \  A_4(q)\,, \   A_6(q)\, ,\  
C_0^\pm(q)\,,  \ C_1^{-1}(q)\,,  \ K_2(q)$$
which we do not address nor require.

\begin{Thm} \label{ttt5} The holomorphic anomaly equations
for the series $\widetilde{\mathcal{F}}^{\mathsf{B}}_g$
associated to the formal quintic hold for $g\geq 2$:

\begin{multline*}
\frac{1}{C_0^2C_1^2}\frac{\partial \widetilde{\mathcal{F}}_g^{\mathsf{B}}}{\partial{A_2}}-\frac{1}{5C_0^2C_1^2}\frac{\partial \widetilde{\mathcal{F}}_g^{\mathsf{B}}}{\partial{A_4}}K_2+\frac{1}{50C_0^2C_1^2}\frac{\partial \widetilde{\mathcal{F}}_g^{\mathsf{B}}}{\partial{A_6}}K_2^2
=
\frac{1}{2}\sum_{i=1}^{g-1} 
\frac{\partial \widetilde{\mathcal{F}}_{g-i}^{\mathsf{B}}}{\partial{T}}
\frac{\partial \widetilde{\mathcal{F}}_i^{\mathsf{B}}}{\partial{T}}
+
\frac{1}{2}\,
\frac{\partial^2 \widetilde{\mathcal{F}}_{g-1}^{\mathsf{B}}}{\partial{T}^2}\,
,
\end{multline*}

\begin{align*}
\frac{\partial \widetilde{\mathcal{F}}_g^{\mathsf{B}}}{\partial K_2}=0\,.
\end{align*}
\end{Thm}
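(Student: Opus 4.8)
The plan is to derive the holomorphic anomaly equations as a consequence of the structural result in Theorem \ref{ooo5} together with an explicit computation of the stable quotient series $\widetilde{\mathcal{F}}_g^{\mathsf{SQ}}$ via torus localization on $\overline{Q}_g(\PP^4,d)$. The starting point is the same localization machinery used to define the formal invariants \eqref{fredfredfred}: the $\T$-fixed loci of the quasimap space are indexed by decorated graphs, and the Euler class $e(R\pi_*(\mathsf{S}^{-5}))$ contributes vertex and edge factors that can be organized into the hypergeometric series $I_0, I_1$ and the auxiliary series $L, C_0, C_1, A_{2k}, K_2$ appearing in the statement. The first step is therefore to express $\widetilde{\mathcal{F}}_g^{\mathsf{B}}$ through the $R$-matrix of the associated cohomological field theory (following \cite{LP1}), so that the generating series is encoded by a Givental-style graph sum whose building blocks are exactly the series in the polynomial ring $\mathbb{C}[L^{\pm 1}][A_2,A_4,A_6,C_0^{\pm 1},C_1^{-1},K_2]$.

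Next I would track how the operator $\partial/\partial T = C_0^{-1}C_1^{-1}\DD$ acts on these building blocks. The key computational input is a closed system of differential equations, under $\DD = q\frac{d}{dq}$, satisfied by $L$, $C_0$, $C_1$, and the $A_{2k}$; these follow from the Picard--Fuchs equation governing the $I$-functions and from the definitions of $A_2, A_4, A_6$ in terms of $\DD C_0/C_0$ and $\DD C_1/C_1$. Differentiating the graph sum with respect to $T$ inserts an extra marked point carrying the series $C_1^{-1}$, which accounts for part (iii) of Theorem \ref{ooo5}, while differentiation with respect to the generators $A_2, A_4, A_6$ has the geometric interpretation of breaking or splitting an edge in the graph sum. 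The equation $\partial \widetilde{\mathcal{F}}_g^{\mathsf{B}}/\partial K_2 = 0$ should fall out of the observation that $K_2$ does not actually appear in the canonical lift of the graph sum, i.e.\ the propagator responsible for the holomorphic anomaly is built from the $A_{2k}$ and not from $K_2$; this is the easier of the two assertions and I would dispatch it first.

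The heart of the argument is matching the left-hand side, a specific first-order combination of the edge-breaking operators $\partial/\partial A_2, \partial/\partial A_4, \partial/\partial A_6$ weighted by $1$, $K_2$, $K_2^2$, against the right-hand side, which is the genus-degeneration contribution $\frac{1}{2}\sum_{i=1}^{g-1}(\partial_T\widetilde{\mathcal{F}}_{g-i}^{\mathsf{B}})(\partial_T\widetilde{\mathcal{F}}_i^{\mathsf{B}}) + \frac{1}{2}\partial_T^2\widetilde{\mathcal{F}}_{g-1}^{\mathsf{B}}$. Geometrically this is precisely the statement that cutting one edge of a genus $g$ graph either disconnects it into two lower-genus pieces (the sum term) or lowers the genus by one while creating a self-node (the $\partial_T^2$ term), the standard origin of the BCOV holomorphic anomaly in Givental--Teleman formalism. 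The main obstacle will be getting the \emph{precise} numerical coefficients $-\tfrac{1}{5}$, $\tfrac{1}{50}$ and the powers of $K_2$ to come out correctly: this requires carefully identifying how the quadratic propagator of the theory decomposes along the filtration by $A_2, A_4, A_6$, and confirming that the specific polynomial combination on the left reproduces exactly the propagator insertion that glues two half-edges. I expect this coefficient-matching step, rather than the structural graph-sum setup, to absorb essentially all of the real work, and it is where a direct computation of the $R$-matrix entries and their $\DD$-derivatives becomes unavoidable.
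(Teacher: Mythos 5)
Your outline for the \emph{first} equation follows the paper's actual route: a Givental-style graph sum (Proposition \ref{VE}), an edge-cutting identity that produces the degeneration terms $\frac{1}{2}\sum_i \partial_T\widetilde{\mathcal{F}}^{\mathsf{B}}_{g-i}\,\partial_T\widetilde{\mathcal{F}}^{\mathsf{B}}_{i} + \frac{1}{2}\partial_T^2\widetilde{\mathcal{F}}^{\mathsf{B}}_{g-1}$, and a coefficient-matching step to produce the weights $1, -\tfrac{1}{5}K_2, \tfrac{1}{50}K_2^2$. What you leave as ``expected work'' is exactly where the paper's content lies: the edge contribution is shown (Lemma \ref{L2}) to be \emph{linear} in the generator $\mathcal{Y}=\DD C_1/C_1$, its $\mathcal{Y}$-coefficient factors as a product of two $H$-leg contributions, and the operator actually applied is $\partial/\partial\mathcal{Y}$ --- the combination $\frac{1}{C_1^2}\partial_{A_2}-\frac{K_2}{5C_1^2}\partial_{A_4}+\frac{K_2^2}{50C_1^2}\partial_{A_6}$ is then checked to equal $-5L^5\,\partial/\partial\mathcal{Y}$ on the ring generated by $\mathcal{X},\mathcal{X}_1,\mathcal{X}_2,\mathcal{Y}$. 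One further step you omit entirely is the conversion of the resulting one- and two-pointed stable quotient series $\lan H\ran^{\mathsf{SQ}}_{h,1}$, $\lan H,H\ran^{\mathsf{SQ}}_{h,2}$ into $T$-derivatives of $\widetilde{\mathcal{F}}^{\mathsf{B}}_h$, which requires the wall-crossing of Ciocan-Fontanine--Kim together with the divisor equation on the Gromov--Witten side, and the final lift of the identity from $\CC[[q]]$ to the free polynomial ring.

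The genuine gap is in your treatment of the second equation. You assert that $\partial\widetilde{\mathcal{F}}^{\mathsf{B}}_g/\partial K_2=0$ ``should fall out of the observation that $K_2$ does not actually appear in the canonical lift of the graph sum.'' This is circular: whether $K_2$ appears in the lift is precisely what is to be proved, and in fact the individual edge contributions \emph{do} depend on $\mathcal{X}=-L^5K_2$ --- Lemma \ref{L3} shows each edge term is degree $1$ in $\mathcal{X}$ with an explicit nonzero coefficient built from $R_{0\,k-1}R_{2\,l-1}$. The correct argument runs exactly parallel to the first equation: differentiating the graph sum with respect to $\mathcal{X}$ cuts an edge and produces the combination $\sum_{i}\lan 1\ran^{\mathsf{SQ}}_{g-i,1}\lan H^2\ran^{\mathsf{SQ}}_{i,1}+\lan 1,H^2\ran^{\mathsf{SQ}}_{g-1,2}$, and it is the \emph{string equation} (vanishing of correlators with a unit insertion) that forces this to be zero; the lift to the polynomial ring then uses the unit axiom of the CohFT. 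The authors themselves remark that they initially expected this equation to \emph{fail} for the formal theory, which underlines that the vanishing is not a structural triviality of the propagator but a nontrivial cancellation after summing over all graphs.
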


\noindent 
The equality of Theorem \ref{ttt5} holds in 
the ring
$$\mathbb{C}[L^{\pm1}][A_2,A_4,A_6,C_0^{\pm1},C_1^{-1},K_2]\, .$$
Theorem \ref{ttt5} exactly matches{\footnote{Our functions
$K_2$ and $A_{2k}$ 
 are normalized differently with respect to $C_0$ and $C_1$.
The dictionary to exactly match the notation of \cite[(2.52)]{ASYZ} is to 
multiply our $K_2$ by $(C_0C_1)^2$ and our $A_{2k}$ by $(C_0C_1)^{2k}$.}}
%{\footnote{To match
%the factors of $C_0$, the B-model series in %\cite{ASYZ}
%should be taken to be
%$$\mathcal{F}_g^B= C_0^{2g-2} %\mathcal{F}_g^{\mathsf{SQ}}\, .$$}}
the
conjectural holomorphic anomaly equations
 \cite[(2.52)]{ASYZ} for
the true quintic theory $I_0^{2g-2}\cdot\mathcal{F}_g^{\mathsf{SQ}}$.

The first holomorphic anomaly equation of
Theorem \ref{ttt5} was announced in our 
paper \cite{LP1} in February 2017
where a parallel study of the toric Calabi-Yau $K\proj^2$ was developed.
In January 2018 at the {\em Workshop on higher genus} at ETH Z\"urich,
Shuai Guo of Peking University informed us that our same argument also yields the
second holomorphic anomaly equation 
\begin{equation}\label{gg559}
\frac{\partial \widetilde{\mathcal{F}}_g^{\mathsf{B}}}{\partial K_2}=0\,.
\end{equation}
In fact, we had incorrectly thought \eqref{gg559}
would fail in the formal theory and would {\em not} have included \eqref{gg559}
without communication with Guo, so Guo should
be credited with the first proof of \eqref{gg559}.

\subsection{Constants}
Theorem \ref{ttt5} and
a few observations
determine   $\widetilde{\mathcal{F}}_g^{\mathsf{B}}$
from the lower genus data
$$\Big\{\, h<g \,\Big| \, \widetilde{\mathcal{F}}_h^{\mathsf{B}}\, \Big\}$$
and finitely many constants of integration.
The additional 
observations{\footnote{See Remark \ref{C0C1}.}}
required are:
\begin{enumerate}
    \item [(i)]
    The proof of part (i) of
Theorem \ref{ooo5} shows that 
$$\widetilde{\mathcal{F}}^{\mathsf{B}}_g \in \mathbb{C}[L^{\pm1}][A_2,A_4,A_6,C_0^{\pm1},C_1^{-1},K_2]\,$$ 
does {\em not} depend on $C_1^{-1}$. Hence, every term (both on the left and
right) 
in the first holomorphic 
anomaly equation of Theorem \ref{ttt5} is of degree 2 in $C_1^{-1}$. After multiplying by $C_1^2$,
no $C_1$ dependence remains.
\item[(ii)]
 The proof of Theorem \ref{ooo5} shows that all terms
 in the first equation are homogeneous of degree $2g-4$ with respect to $C_0$.
 After dividing by $C_0^{2g-4}$,
 no $C_0$ dependence remains.
 \end{enumerate}
 Therefore, the first holomorphic anomaly equation of Theorem \ref{ttt5} may be viewed as holding in $\CC[L^{\pm 1}][A_2,A_4,A_6,K_2]$. 
 
Since the second holomorphic anomaly equation \eqref{gg559} implies
$\widetilde{\mathcal{F}}_g^{\mathsf{B}}$ has no $K_2$ dependence,
the first holomorphic anomaly equation determines the each of the three derivatives
$$\frac{\partial \widetilde{\mathcal{F}}_g^{\mathsf{B}}}{\partial{A_2}}\, , \ \ \  \frac{\partial \widetilde{\mathcal{F}}_g^{\mathsf{B}}}{\partial{A_4}}\, ,\ \ \ \frac{\partial \widetilde{\mathcal{F}}_g^{\mathsf{B}}}{\partial{A_6}}\, .$$
Hence, Theorem \ref{ttt5} determines 
$$C_0^{2-2g}\cdot\widetilde{\mathcal{F}}^{\mathsf{B}}_g$$ uniquely as a polynomial in $A_2,A_4,A_6$ up to a constant term in $\CC[L^{\pm 1}]$. In fact, the degree of the constant term can be bounded (as will be seen in Section \ref{Btd}). So Theorem \ref{ttt5} determines $\widetilde{\mathcal{F}}^{\mathsf{B}}_g$ from lower genus data together with finitely many constants of integration.

The constants of integration for the formal quintic can be effectively
computed via the localization formula, but whether there exists a
closed formula determining the constants
is an interesting open question.

\subsection{Acknowledgments} 
We thank I.~Ciocan-Fontanine, E. Clader, Y. Cooper, B.~Kim, 
A.~Klemm, Y.-P. Lee, A.~Marian, M.~Mari\~no, D.~Maulik, D.~Oprea,  E.~Scheidegger, Y. Toda, and A.~Zinger
for discussions over the years 
about the moduli space of stable quotients and the invariants of Calabi-Yau geometries. The work of Q. Chen, F. Janda,
S. Guo, and Y. Ruan as presented at the 
{\em Workshop on higher genus}  is crucial for the
wider study of the formal (and true) quintic. We are very grateful
to them for sharing their ideas with us.

R.P. was partially supported by 
% SNF-200021143\-274, 
SNF-200020182181, ERC-2012-AdG-320368-MCSK, 
ERC-2017-AdG-786580-MACI,
SwissMAP, and
the Einstein Stiftung. 
H.L. was supported by the grant ERC-2012-AdG-320368-MCSK and
ERC-2017-AdG-786580-MACI.

This project has received funding from the European Research Council (ERC) under the European Union’s
Horizon 2020 research and innovation program (grant agreement No 786580).

\section{Localization graphs}

\label{locq}
\subsection{Torus action}
Let $\mathsf{T}=(\com^*)^{m+1}$ act diagonally on the vector space $\mathbb{C}^{m+1}$
with weights
$$-\lambda_0, \ldots, -\lambda_m\, .$$
Denote the $\mathsf{T}$-fixed points of 
the induced $\mathsf{T}$-action on $\proj^m$ by
$$p_0, \ldots, p_{m}\, . $$
The weights of $\mathsf{T}$ on the tangent space $T_{p_j}(\proj^m)$ are
$$\lambda_j-\lambda_0, \ldots, \widehat{\lambda_j-\lambda_j}  ,\ldots, \lambda_j-\lambda_{m}\, .$$

There is an induced $\mathsf{T}$-action on 
the moduli space 
%of quasimaps{\footnote{We will also use the terminology
%of quasimaps for stable quotients since we require  results from \cite{CKg}.
%In fact, all the
%quasimap moduli spaces considered here will be 
%moduli spaces of stable quotients on projective spaces or local projective %spaces.}} 
$\overline{Q}_{g,n}(\proj^m,d)$.
The localization formula of \cite{GP} applied to the  virtual fundamental class 
$[\overline{Q}_{g,n}(\proj^m,d)]^{vir}$ will play a fundamental role our paper.
The $\mathsf{T}$-fixed loci are represented in terms of dual graphs,
and the contributions of the $\mathsf{T}$-fixed loci are given by
tautological classes. The formulas here are
standard. We precisely follow the
notation of \cite[Section 2]{LP1}.

\subsection{Graphs}\label{grgr}
Let the genus $g$ and the number of markings $n$ for the moduli
space be in
the stable range
\begin{equation}\label{dmdm}
2g-2+n>0\, .
\end{equation}
We can organize the $\mathsf{T}$-fixed loci 
of $\overline{Q}_{g,n}(\proj^m,d)$
according to decorated graphs.
A {\em decorated graph} $\Gamma \in \mathsf{G}_{g,n}(\proj^m)$ consists 
of the data $(\mathsf{V}, \mathsf{E}, 
\mathsf{N}, \mathsf{g}, \mathsf{p} )$ where
\begin{enumerate}
 \item[(i)] $\mathsf{V}$ is the vertex set, 
 \item[(ii)] $\mathsf{E}$ is the edge set (including
 possible self-edges),
 \item[(iii)] $\mathsf{N} : \{1,2,..., n\} \rightarrow \mathsf{V}$ is the marking
 assignment,
   \item[(iv)] $\mathsf{g}: \mathsf{V} \rightarrow \ZZ_{\geq 0}$ is a genus
 assignment satisfying
 $$g=\sum_{v \in V} \mathsf{g}(v)+h^1(\Gamma)\, $$
and for which $(\mathsf{V},\mathsf{E},\mathsf{N},\mathsf{g})$ is stable graph{\footnote
{Corresponding to a stratum of the moduli space
of stable curves $\overline{M}_{g,n}$.}}, 
 \item[(v)] $\mathsf{p} : \mathsf{V} \rightarrow ({\PP ^m})^{\mathsf{T}}$ is an assignment of a $\mathsf{T}$-fixed point $\mathsf{p} (v)$ to each vertex $v \in \mathsf{V}$.
\end{enumerate}
The markings $\mathsf{L}=\{1,\ldots,n\}$ are often called {\em legs}.

To each decorated graph $\Gamma\in \mathsf{G}_{g,n}(\proj^m)$, we associate the set of ${\mathsf{T}}$-fixed loci of  
$$\sum_{d\geq 0} \left[\overline{Q}_{g, n} (\proj^m, d)\right]^{\vir} q^d$$
with elements described as follows:
\begin{enumerate}
 
 \item[(a)] If $\{v_{i_1},\ldots,v_{i_k}\}=\{v\, |\, \mathsf{p}(v)=p_i\}$, then $f^{-1}(p_i)$ is a disjoint union of connected stable curves of genera $\mathsf{g}(v_{i_1}),\ldots, \mathsf{g}(v_{i_k})$ and finitely many points.
 
 %\item[(a)] If $\{v_{i_1},\ldots,v_{i_k}\}=\{v\, |\, \mathsf{p}(v)=p_i\}$, then $f^{-1}(p_i)$ is a disjoint union of connected stable curves of genera $\mathsf{g}(v_{i_1}),\ldots, \mathsf{g}(v_{i_k})$.
 
 \item[(b)] There is a bijective
  correspondence between the connected components of $C \setminus D$ and the set of edges{\footnote{Self-edges correspond to loops of ${\mathsf{T}}$-invariant
  rational curves.}} and legs of $\Gamma$ respecting  vertex incidence where $C$ is domain curve and $D$ is union of all subcurves of $C$ which appear in (a). 
  
  %\item[(b)] There is a bijective correspondence between the connected components of $f^{-1}(\PP^m/\{p_0, \ldots, p_m\})$ and the edges of $\Gamma$ respecting  vertex incidence. 
\end{enumerate}
We write the localization formula as
$$\sum_{d\geq 0} \left[\overline{Q}_{g, n} (\PP^m, d)\right]^{\vir} q^d =
\sum_{\Gamma\in \mathsf{G}_{g,n}(\PP^m)} \text{Cont}_\Gamma\, .$$
While $\mathsf{G}_{g,n}(\proj^m)$ is a finite set,
each contribution $\text{Cont}_\Gamma$ is
a series in $q$ obtained from
an infinite sum over all edge possibilities (b).

\subsection{Unstable graphs}
The moduli spaces of stable quotients $$\overline{Q}_{0,2}(\proj^m,d) \ \ \
\text{and} \ \ \ \overline{Q}_{1,0}(\proj^m,d)$$
for $d>0$
are the only{\footnote{The moduli spaces
$\overline{Q}_{0,0}(\proj^m,d)$ and
$\overline{Q}_{0,1}(\proj^m,d)$
are empty by the definition of a stable
quotient.}}
cases where
the pair $(g,n)$ does 
{\em not} satisfy the Deligne-Mumford stability condition 
\eqref{dmdm}. 

An appropriate set of decorated graphs $\mathsf{G}_{0,2}(\PP^m)$
 is easily defined: The graphs
$\Gamma \in \mathsf{G}_{0,2}(\PP^m)$ all have 2 vertices
connected by a single edge. Each vertex carries a marking.
All  of the  conditions (i)-(v)
of Section \ref{grgr} are satisfied
except for the stability of $(\mathsf{V},\mathsf{E}, \mathsf{N},\gamma)$.
%and
%$\mathsf{G}_{1,0}(\PP^m)$ 
The  localization formula holds,
\begin{eqnarray}\label{ddgg}
\sum_{d\geq 1} \left[\overline{Q}_{0, 2} (\PP^m, d)\right]^{\vir} q^d &=&
\sum_{\Gamma\in \mathsf{G}_{0,2}(\proj^m)} \text{Cont}_\Gamma\,,
\end{eqnarray}
For $\overline{Q}_{1,0}(\proj^m,d)$, the matter
is more problematic --- usually a marking
is introduced to break the symmetry.
%\\ \nonumber
%\sum_{d\geq 1} \left[\overline{Q}_{1, 0} (\PP^m, %d)\right]^{\vir} q^d &=&
%\sum_{\Gamma\in \mathsf{G}_{1,0}(\proj^m)} \text{Cont}_\Gamma
%\end{eqnarray}
%are valid.

\section{Basic correlators}\label{bcbcbc}
\subsection{Overview}
We review here basic generating series in $q$ which 
arise in  the genus 0 theory of quasimap invariants. The series
will play a fundamental role in
the calculations of Sections \ref{hgi} - \ref{hafp}
related
to the holomorphic anomaly equation for formal quintic invariants.

We fix a torus action $\mathsf{T}=(\CC^*)^5$ on $\PP^4$ with
weights{\footnote{The associated weights on
$H^0(\PP^4,\mathcal{O}_{\PP^4}(1))$ are
$\lambda_0,\lambda_1,\lambda_2,\lambda_3,\lambda_4$
and so match the conventions of
Section \ref{ham}.}}
$$-\lambda_0, -\lambda_1, -\lambda_2, -\lambda_3, -\lambda_4$$
on the vector space $\mathbb{C}^5$.
The $\T$-weight on the fiber over
$p_i$ of the canonical
bundle 
\begin{equation}\label{pqq9}
\mathcal{O}_{\PP^4}(5) \rightarrow \PP^4
\end{equation}
is $5\lambda_i$.

For our formal quintic theory, we will use the specialization
\begin{equation}
\label{spez}
\lambda_i=\zeta^i
\end{equation}
where $\zeta$ is the primitive fifth root of unity.
Of course, we then have
\begin{eqnarray*}
    \lambda_0+\lambda_1+\lambda_2+\lambda_3+\lambda_4&=&0\, , \\
    \sum_{i \ne j}\lambda_i \lambda_j&=&0\, , \\
    \sum_{i \ne j \ne k} \lambda_i \lambda_j \lambda_k&=&0\,  , \\
    \sum_{i \ne j \ne k \ne l} \lambda_i \lambda_j \lambda_k \lambda_l&=&0\,.
\end{eqnarray*}

\subsection{First correlators}
We will require several correlators defined 
via  the Euler class{\footnote{Equation
\eqref{34gg34} is the definition
of $e(\text{Obs})$. The
right side of \eqref{34gg34}
is defined after localization as
explained in Section \ref{ham}.},
\begin{equation}\label{34gg34}
e(\text{Obs})= e(R\pi_* (\mathsf{S}^{-5}))\, ,
\end{equation}
associated to the formal quintic geometry
on the moduli space $\overline{Q}_{g,n}(\PP^4,d)$.
The first two are obtained from
standard stable quotient invariants.
For $\gamma_i \in H^*_{\T} (\PP^4)$, let
\begin{eqnarray*}
    \Big \langle \gamma_1\psi^{a_1}, ...,\gamma_n\psi^{a_n} \Big\rangle_{g,n,d}^{\mathsf{SQ}}&=& 
    \int_{[\overline{Q}_{g,n}(\PP^4,d)]^{\vir}} 
    e(\text{Obs})\cdot
    \prod_{i=1}^n \text{ev}_i^*(\gamma_i)\psi_i^{a_i},\\
      \Big\langle \Big\langle \gamma _1\psi  ^{a_1} , ..., \gamma _n\psi  ^{a_n} \Big\rangle \Big\rangle _{0, n}^{\mathsf{SQ}} 
    &=& \sum _{d\geq 0}\sum_{k\geq 0} \frac{q^{d}}{k!}
 \Big\lan    \gamma _1\psi  ^{a_1} , ..., \gamma _n\psi  ^{a_n} , t, ..., t  \Big\ran_{0, n+k, d}^{\mathsf{SQ}} , 
 \end{eqnarray*}
 where, in the second series,
 $t \in H_{\T}^* (\PP^4)$.
 We will systematically use the quasimap notation $0+$
for stable quotients,
\begin{eqnarray*}
    \Big \langle \gamma_1\psi^{a_1}, ...,\gamma_n\psi^{a_n} \Big\rangle_{g,n,d}^{0+}&=&
    \Big \langle \gamma_1\psi^{a_1}, ...,\gamma_n\psi^{a_n} \Big\rangle_{g,n,d}^{\mathsf{SQ}} \\
      \Big\langle \Big\langle \gamma _1\psi  ^{a_1} , ..., \gamma _n\psi  ^{a_n} \Big\rangle \Big\rangle _{0, n}^{0+} 
    &=& 
    \Big\langle \Big\langle \gamma _1\psi  ^{a_1} , ..., \gamma _n\psi  ^{a_n} \Big\rangle \Big\rangle _{0, n}^{\mathsf{SQ}}\, . 
 \end{eqnarray*}

\subsection{Light markings}\label{lightm}
Moduli of quasimaps can be considered with $n$ ordinary (weight 1) markings and $k$ light 
(weight $\epsilon$) markings{\footnote{See Sections 2 and 5 of \cite{BigI}.}},
$$\overline{Q}^{0+,0+}_{g,n|k}(\PP^4,d)\, .$$
Let $\gamma_i \in H^*_{\T} (\PP^4)$ be
equivariant cohomology classes, and
%\ot \QQ (\lambda )$, and
let
$$\delta _j \in H^*_{\T} ([\mathbb{C}^5/\com^* ])$$ 
be classes on the stack quotient. 
Following the notation of \cite{KL}, 
we define series for the formal quintic geometry,
%For $\gamma_i \in H^*_{\T} (Y)\ot \QQ (\lambda )$, $\tilde{t}, \delta _j \in H^*_{\T} ([V/\G ], \QQ )$ denote

\begin{multline*}
    \Big\lan \gamma _1\psi  ^{a_1} , \ldots, \gamma _n\psi  ^{a_n} ;  \delta _1, \ldots, \delta _k \Big\ran _{g, n|k, d}^{0+, 0+}  = \\
\int _{[\overline{Q}^{0+, 0+}_{g, n|k} (\PP^4, d)]^{\vir}} 
e(\text{Obs})\cdot
\prod _{i=1}^n \text{ev}_i^*(\gamma _i)\psi _i ^{a_i} 
\cdot \prod _{j=1}^k \widehat{\text{ev}}_j ^* (\delta _j)\, , 
\end{multline*}
\begin{multline*}
\Big \langle \Big\langle \gamma _1\psi  ^{a_1} , \ldots, \gamma _n\psi  ^{a_n} \Big\rangle\Big\rangle _{0, n}^{0+, 0+} 
= \\ \sum _{d\geq 0}\sum_{k\geq 0} \frac{q^{d}}{k!}
 \Big\lan    \gamma _1\psi  ^{a_1} , \ldots, \gamma _n\psi  ^{a_n} ; {t}, \ldots, {t}  
 \Big\ran_{0, n|k, d}^{0+, 0+} \, ,
 \end{multline*}
 where, in the second series,
 ${t} \in H_{\T}^* ([\mathbb{C}^5/\com^* ])$.
 
 For each $\T$-fixed point $p_i\in \PP^4$, let 
 $$e_i= \frac{e(T_{p_i}(\PP^4))}{5\lambda_i}$$
 be the equivariant Euler class of
 the tangent space of $\PP^4$ at $p_i$ with twist by $\mathcal{O}_{\PP^4}(5)$. Let
 $$ \phi_i=\frac{\prod_{j \ne i}(H-\lambda_j)}{5\lambda_i e_i}, \ \ \phi^i=e_i \phi_i\ \ \in H^*_{\T}(\PP^4)\, $$ be cycle classes. Crucial for us are the series
 \begin{align*}
\mathds{S}_i(\gamma) & = e_i \Big\langle \Big\langle  \frac{\phi _i}{z-\psi} , \gamma 
\Big\rangle \Big\rangle _{0, 2}^{0+,0+}\ \ ,  \\
\mathds{V}_{ij}  & =  
\Big\langle \Big\langle  \frac{\phi _i}{x- \psi } ,  \frac{\phi _j}{y - \psi } 
\Big\rangle \Big\rangle  _{0, 2}^{0+,0+}  \ \ . 
 \end{align*}
Unstable degree 0 terms are included by hand in the
above formulas. For $\mathds{S}_i(\gamma)$, the unstable degree 0 term is
$\gamma|_{p_i}$. For $\mathds{V}_{ij}$, the unstable degree 0 term is
$\frac{\delta_{ij}}{e_i(x+y)}$.

 We also write $$\mathds{S}(\gamma)=\sum_{i=0}^4 {\phi_i} \mathds{S}_i(\gamma)\, .$$ 
The series $\mathds{S}_i$ and $\mathds{V}_{ij}$
 satisfy the basic relation
\begin{equation}  \label{wdvv} e_i\mathds{V}_{ij} (x, y)e_j   = 
\frac{\sum _{k=0}^4 \mathds{S}_i (\phi_k)|_{z=x} \, \mathds{S}_j(\phi ^k )|_{z=y}}{x+ y}\,   \end{equation}
 proven{\footnote{In Gromov-Witten
 theory, a parallel relation is
 obtained immediately from the
 WDDV equation and the string equation.
 Since the map forgetting a point
 is not always well-defined for
 quasimaps, a different argument 
 is needed here \cite{CKg}}} in \cite{CKg}.
 
 Associated to each $\T$-fixed point $p_i\in \PP^4$,
 there is a special $\T$-fixed point locus, 
\begin{equation}\label{ppqq}
\overline{Q}^{0+, 0+}_{0, k|m} (\PP^4,d) ^{\T, p_i} \subset
\overline{Q}^{0+, 0+}_{0, k|m}(\PP^4, d)\, ,
\end{equation}
where all markings lie on a single connected
genus 0 domain component contracted to $p_i$.
Let $\text{Nor}$ denote the equivariant
normal bundle 
of $Q^{0+, 0+}_{0, n|k} (\PP^4,d) ^{\T, p_i}$
with respect to the embedding \eqref{ppqq}.
Define 
\begin{multline*}
\Big\lan \gamma _1\psi  ^{a_1} , \ldots, \gamma _n\psi  ^{a_n} ;  \delta _1, ..., \delta _k \Big\ran _{0, n|k, d}^{0+, 0+, p_i}  
=\\
\int _{[\overline{Q}^{0+, 0+}_{0, n|k} (\PP^4, d) ^{\T, p_i}]} 
\frac{e(\text{Obs})}{e(\text{Nor})}\cdot
\prod _{i=1}^n \text{ev}_i^*(\gamma _i)\psi _i ^{a_i} \cdot
\prod _{j=1}^k \widehat{\text{ev}}_j ^* (\delta _j) \, ,
\end{multline*}

\begin{multline*}
  \Big\langle \Big\langle
  \gamma _1\psi  ^{a_1} ,\ldots, \gamma _n\psi  ^{a_n} 
  \Big\rangle \Big\rangle  _{0, n}^{0+, 0+, p_i} =\\
 \sum _{d\geq 0}\sum_{k\geq 0} \frac{q^{d}}{k!}
 \Big\lan    \gamma _1\psi  ^{a_1} , \ldots, \gamma _n\psi  ^{a_n} ; {t}, \ldots, {t}  \Big \ran_{0, n|k, \beta}^{0+, 0+, p_i} \, .
\end{multline*}

\subsection{Graph spaces and I-functions}
\subsubsection{Graph spaces}
The {big I-function} is defined in \cite{BigI}
via the geometry of weighted quasimap graph spaces. 
We briefly summarize the constructions of \cite{BigI}
in the special case of 
%
%(In \cite{BigI}, they defined $(\theta,\epsilon)$-stable quasimap graph space. We summarize the results only for 
%
$(0+,0+)$-stability. The more general weightings
discussed in \cite{BigI} will not be
needed here.

As in Section \ref{lightm}, we consider the quotient
$$\com^5/\com^*$$
associated to $\PP^4$.
Following \cite{BigI},
there is a $(0+,0+)$-{\em stable quasimap graph space}
 \begin{equation}\label{xmmx}
     \mathsf{QG}_{g, n|k, d }^{0+,0+} ([\com^5/\com^*] ) \, .
     %=  
 %\mathsf{Q}_{g, m|k} ^{0+,0+} ([\CC^3\times\CC^2/ %\CC^* \times\CC^*], (d , 1)). 
 \end{equation}
A $\CC$-point of the graph space is described by data 
$$((C, {\bf x}, {\bf y}), (f,\varphi):C\lra [\CC^5/\CC^*]\times [\CC^2/\CC^*]).$$ 
By the definition of stability, $\varphi$ is a regular map to $$\PP^1=\CC^2/\!\!/\CC^*\, $$ of class $1$.
Hence, the domain curve $C$ has a distinguished irreducible component $C_0$ canonically isomorphic to $\PP ^1$ via $\varphi$. 
The {\em standard} $\CC ^*$-action, 
\begin{equation}\label{tt44}
t\cdot [\xi _0, \xi _1] = [t\xi _0, \xi _1], \, \, \text{ for } t\in \CC ^*, \, [\xi _0, \xi _1]\in \PP ^1,
\end{equation}
induces  a $\CC ^*$-action on the graph space.

The $\CC^*$-equivariant cohomology of a point is
a free algebra with generator $z$,
$$H^*_{\CC ^*} (\Spec (\CC )) = \QQ [z]\, .$$
Our convention is to define
$z$ as the $\CC^*$-equivariant first Chern class of the tangent line $T_0\PP ^1$ at $0\in\PP^1$ with respect to the
action \eqref{tt44},
$$z=c_1(T_0\PP ^1)\, .$$

The $\T$-action on $\com^5$ lifts to a $\T$-action on
the graph space \eqref{xmmx} which commutes with the
$\CC^*$-action obtained from the distinguished domain component.
As a result, we have a $\T\times \CC^*$-action on the graph space
and
 $\T\times\CC ^*$-equivariant evaluation morphisms
%\begin{align}\nonumber &\hat{\widetilde{ev}}_j :  QG_{g, m|k, \beta }^{\theta, %\ke} ([W/\G] ) \ra       [W/\G]\times \PP^1 , & j=1,\dots,k, \\ 
%        \nonumber     & \widetilde{ev} _i : QG_{g, m|k, \beta }^{\theta, \ke} ([W/\G] ) \ra       \WmodtG\times \PP^1 ,  & i=1,\dots,m, \end{align}   
%and
\begin{align}
\nonumber     &\text{ev}_i: \mathsf{QG}_{g, n|k, \beta }^{0+, 0+} ([\CC^5/\CC^*] ) \ra       \PP^4 ,  & i=1,\dots,n\, ,\\
\nonumber &\widehat{\text{ev}}_j:  \mathsf{QG}_{g, n|k, \beta }^{0+,0+} 
([\CC^5/\CC^*] ) \ra       [\CC^5/\CC^*] , & j=1,\dots,k\, .
\end{align}
Since a morphism $$f: C \ra [\CC^5/\CC^*]$$ 
is equivalent to the data of a 
principal $\G$-bundle $P$ on $C$ and a section $u$ of $P\times _{\CC^*} 
\CC^5$, 
there is a natural morphism $$C\ra E\CC^* \times _{\CC^*} \CC^5$$ and hence a pull-back map
 \[ f^*:  H^*_{\CC^*}([\CC^5/\CC^*])  \ra H^*(C). \] 
 The above construction applied
 to the universal curve over the moduli space 
 and the universal morphism to $[\CC^5/\CC^*]$ is $\T$-equivariant.
 Hence,
 %The evaluation maps are the compositions of the universal morphism with the %sections of the
%universal curve giving the markings and are $\T\times\CC ^*$-equivariant.
 we obtain a pull-back map 
 \[ \widehat{\text{ev}}_j^*: H^*_{\T}(\CC^5, \QQ)\otimes_\QQ \QQ[z] \ra H^*_{\T\times \CC^*} (\mathsf{QG}_{g, n|k, \beta }^{0+, 0+} 
 ([\CC^5/\CC^*] ) , \QQ ) \]  
 associated to the evaluation map $\widehat{\text{ev}}_j$.
 
% We identify as usual $H^*_{\T} ([W/\G], \QQ):= H^*_{\G\times \T}(W, \QQ) $.

%Now fix $(\theta,\ke)$ (including the cases $\theta=(0+)\cdot\theta_0$ and %$\ke=0+$) and consider the graph spaces 
%$QG_{0, 0|k, \beta }^{\theta, \ke} ([W/\G] )$. 

\subsubsection{{\em{I}}-functions}
The description of the fixed loci for the $\CC^*$-action 
on $$\mathsf{QG}_{g, 0|k, d }^{0+,0+} 
([\CC^5/\CC^*] )$$
is parallel to the description in \cite[\S4.1]{CKg0} for the unweighted case. 
In particular, there is a distinguished
subset $\F_{k,d}$ of the $\CC ^*$-fixed locus for which all the markings and the entire curve class $d$ lie  over $0 \in \PP ^1$. The locus
$\F_{k,d}$ comes with
a natural {\it proper} evaluation map $ev_{\bullet}$ obtained
from the generic point of $\PP ^1$:
\[ \text{ev}_\bullet:  \F_{k,d} \ra \CC^5/\!\!/\CC^* =\PP^4 .  \]
%When $k\ke+\beta(L_\theta) >1$, we have the identification
%\begin{equation*}F_{k,\beta}\cong Q_{0, 1|k}^{\theta, \ke} ([W/\G], \beta ),
%\end{equation*}
%with $\eb=ev_1$, the evaluation map at the weight $1$ marking.

We can explicitly write
\begin{equation*}\F_{k,d}\cong \F_d\times 0^k\subset \F_d\times (\PP^1)^k,
\end{equation*}
where $\F_d$ is the $\CC^*$-fixed locus in $\mathsf{QG}^{0+}_{0,0,d}([\CC^5/\CC^*])$ for which the class $d$ is concentrated over $0\in\PP^1$.  The locus $\F_d$ parameterizes
quasimaps of class $d$,
$$f:\PP^1\lra [\CC^5/\CC^*]\, ,$$ with a base-point of 
length $d$ at $0\in\PP^1$. The restriction of $f$ to $\PP^1\setminus\{0\}$ is a constant map to $\PP^4$ defining the evaluation
map $\text{ev}_\bullet$.

As in \cite{CK, CKg0,CKM}, we define the big $\mathds{I}$-function as the generating function for
the push-forward via $ev_\bullet$ of localization residue contributions of $\F_{k,d}$.
For ${\bf t}\in  
H^*_{\T} ([\CC^5/\CC^*], \QQ )\ot _{\QQ} \QQ[z]$, let
 \begin{align*} \mathrm{Res}_{\F_{k,d}}({\bf t}^k) &=
 \prod_{j=1}^k \widehat{\text{ev}}_j^*({\bf t})\, \cap\, \mathrm{Res}_{\F_{k,d}}[
 \mathsf{QG}_{g, 0|k, d }^{0+,0+} 
([\CC^5/\CC^*])
 ]^{\mathrm{vir}} \\
 &=\frac{\prod_{j=1}^k \widehat{\text{ev}}_j^*({\bf t})
 \cap [\F_{k,d}]^{\mathrm{vir}}}
 {\mathrm{e}(\text{Nor}^{\mathrm{vir}}_{\F_{k,d}})},
 \end{align*}
 where 
%$\iota _\beta : F_\beta \hookrightarrow \QGraphok$ is the inclusion, 
$\text{Nor}^{\mathrm{vir}}_{\F_{k,d}}$ is the virtual normal bundle.
%and $\mathrm{e}^{\CC^*}$ denotes the equivariant %Euler class.

\begin{Def}\label{Je}
 The big $\mathds{I}$-function for the $(0+,0+)$-stability condition,
 as a formal function in $\bf t$,
 is
\begin{equation*}
%\label{Je}
%\dsJ^{\theta, \ke } 
\mathds{I}
(q,{\bf t}, z)=\sum_{d\geq 0}\sum_{k\geq 0} \frac{q^d}{k!}
\text{\em ev}_{\bullet\, *}\Big(\mathrm{Res}_{\F_{k,d}}({\bf t}^k)
\Big)\, .
\end{equation*} 
\end{Def} 

\subsubsection{Evaluations.} Let $\widetilde{H}\in H^*_{\mathsf{T}}([\CC^5/\CC^*])$ and $H\in H^*_{\mathsf{T}}(\PP^4)$ denote the respective hyperplane classes. The $\mathds{I}$-function of Definition \ref{Je} is evaluated in \cite{BigI}.

\begin{Prop}
For the restriction $\mathbf{t}=t \widetilde{H}\in H^*_{\mathsf{T}}([\CC^5/\CC^*],\QQ)$,
\begin{align*}
\mathds{I}(t)=\sum_{d=0}^\infty q^d e^{t(H+dz)/z}\frac{\prod_{k=0}^{5d}(5H+kz)}{\prod_{i=0}^{4}\prod_{k=1}^{d}(H-\lambda_i+kz)}\,.
\end{align*}
\end{Prop}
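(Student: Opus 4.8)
The plan is to compute the residue contribution of each distinguished fixed locus $\F_{k,d}$ directly and then assemble the generating series of Definition \ref{Je}. First I would exploit the product decomposition $\F_{k,d}\cong \F_d\times 0^k$ recorded above to separate the light‑marking factors from the geometry of $\F_d$. On $\F_{k,d}$ every light marking sits over $0\in\PP^1$, so $\widehat{\text{ev}}_j^*(\widetilde H)$ restricts to $H+dz$, the shift $dz$ recording the $\CC^*$‑weight produced by the length‑$d$ base point together with the convention $z=c_1(T_0\PP^1)$. Since $\mathbf t=t\widetilde H$ and each light marking also contributes a normal direction of weight $z$ along $\PP^1$ (hence a factor $1/z$ in $1/e(\text{Nor}^{\vir}))$, each marking yields $t(H+dz)/z$, and the sum $\sum_{k\ge0}\frac{1}{k!}$ over light markings exponentiates to the factor $e^{t(H+dz)/z}$ in the stated formula.

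Next I would identify $\F_d$ geometrically. A quasimap of class $d$ whose entire base point of length $d$ is concentrated at $0$ is given by the line bundle $\mathcal O_{\PP^1}(d)$ together with a section of $\mathcal O(d)^{\oplus5}$ vanishing to order $d$ at $0$; such a section has the form $s_0^{\,d}\cdot(c_0,\dots,c_4)$, where $(c_0,\dots,c_4)$ is the constant value of $f$ on $\PP^1\setminus\{0\}$. Hence $\F_d\cong\PP^4$, it is smooth so its virtual class is the fundamental class $[\PP^4]$, and $\text{ev}_\bullet$ is the identity. This reduces the push‑forward $\text{ev}_{\bullet\,*}$ to simply recording classes in $H^*_\T(\PP^4)$.

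The core computation is the virtual normal bundle $\text{Nor}^{\vir}_{\F_{k,d}}$ together with the twist $e(\text{Obs})$. I would read the deformation–obstruction spaces off $H^\bullet(\PP^1,f^*T_{[\CC^5/\CC^*]})$ via the Euler sequence, keeping track simultaneously of the $\T$‑weights $-\lambda_i$ on the coordinates and the $\CC^*$‑weights on $H^0(\PP^1,\mathcal O(d))=\langle s_0^d,\dots,s_1^d\rangle$, whose monomials carry weights $0,z,\dots,dz$. After removing the fixed direction tangent to $\F_d$, the moving part produces exactly $\prod_{i=0}^4\prod_{k=1}^d(H-\lambda_i+kz)$, the denominator. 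For the obstruction, $f^*\mathcal O_{\PP^4}(5)=\mathcal O_{\PP^1}(5d)$, so $e(\text{Obs})=e(R\pi_*\mathcal O_{\PP^1}(5d))$ is the equivariant Euler class of the rank‑$(5d+1)$ space $H^0(\PP^1,\mathcal O(5d))$, whose weights are $5H,5H+z,\dots,5H+5dz$, giving the numerator $\prod_{k=0}^{5d}(5H+kz)$. Assembling these three inputs and summing over $d$ and $k$ yields the displayed evaluation.

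The main obstacle is the bookkeeping in the virtual normal bundle: one must split the tangent–obstruction complex into fixed and moving parts, match each $\CC^*$‑eigenspace of $H^0(\PP^1,\mathcal O(d))$ with the correct power of $z$, and verify that the base‑point deformation and the automorphisms of the distinguished $\PP^1$ cancel so that precisely the product $\prod_{i,k}(H-\lambda_i+kz)$ survives. This is where the sign and weight conventions (the definition $z=c_1(T_0\PP^1)$ and the $\T$‑weights $-\lambda_i$) must be handled with care; the remaining assembly is the standard residue‑to‑$\mathds{I}$ formalism of \cite{BigI, CK, CKg0, CKM}.
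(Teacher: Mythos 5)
Your proposal is correct in all essentials. Note first that the paper itself offers no proof of this Proposition: it is quoted from \cite{BigI}, where the big $\mathds{I}$-function is evaluated in greater generality. Your direct residue computation is the standard argument behind that evaluation, and every key identification you make is right: $\F_d\cong \PP^4$ with $\text{ev}_\bullet$ the identity, since the section vanishing to order $d$ at $0$ is $s_0^d\cdot(c_0,\dots,c_4)$ up to gauge; the light markings contributing $(H+dz)/z$ each (the $H+dz$ being the $\T\times\CC^*$-equivariant fiber weight of the degree-$d$ line bundle at $0$, the $1/z$ coming from the normal direction $T_0\PP^1$ in $\F_d\times(\PP^1)^k$) and exponentiating via $\sum_k\frac{1}{k!}$; the $\CC^*$-moving part of the section deformations $H^0(\PP^1,\mathcal{O}(d)^{\oplus 5})$ giving $\prod_{i=0}^4\prod_{k=1}^d(H-\lambda_i+kz)$ after the weight-zero pieces are recognized as tangent to $\F_d$; and the twist $e(R\pi_*\mathsf{S}^{-5})=e(H^0(\PP^1,\mathcal{O}(5d)))=\prod_{k=0}^{5d}(5H+kz)$. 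Two small remarks: the definition of $\mathds{I}$ in the paper does not display the twist by $e(\text{Obs})$ explicitly, so you are (correctly) reading it as built into the residue, consistent with the $d=0$ term $5H$ of the stated formula; and in the graph space the distinguished component is parametrized, so there are no automorphisms of $\PP^1$ to cancel --- only the fixed/moving splitting of the section deformations matters. The remaining work you defer is genuinely just bookkeeping, so I would accept this as a proof sketch whose completion is routine.
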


We return now to the functions $\mathds{S}_i(\gamma)$ defined in Section \ref{lightm}. Using Birkhoff factorization, an evaluation of the series $\mathds{S}(H^j)$ can be obtained from the $\mathds{I}$-function, see \cite{KL}:

\begin{align}\label{S1}
    \nonumber\mathds{S}(1)&=\frac{\mathds{I}}{\mathds{I}|_{t=0,H=1,z=\infty}}\,,\\
    \nonumber\mathds{S}(H)&=\frac{z\frac{d}{dt}\mathds{S}(1)}{z\frac{d}{dt}\mathds{S}(1)|_{t=0,H=1,z=\infty}}\, ,\\
    \mathds{S}(H^2)&=\frac{z\frac{d}{dt}\mathds{S}(H)}{z\frac{d}{dt}\mathds{S}(H)|_{t=0,H=1,z=\infty}}\, ,\\
    \nonumber\mathds{S}(H^3)&=\frac{z\frac{d}{dt}\mathds{S}(H^2)}{z\frac{d}{dt}\mathds{S}(H^2)|_{t=0,H=1,z=\infty}}\, ,\\
    \nonumber\mathds{S}(H^4)&=\frac{z\frac{d}{dt}\mathds{S}(H^3)}{z\frac{d}{dt}\mathds{S}(H^3)|_{t=0,H=1,z=\infty}}\, ,\\
    \nonumber\mathds{S}(1)&=\frac{z\frac{d}{dt}\mathds{S}(H^4)}{z\frac{d}{dt}\mathds{S}(H^4)|_{t=0,H=1,z=\infty}}\, .
\end{align}
For a series $F\in\CC[[\frac{1}{z}]]$, the specialization $F|_{z=\infty}$ denotes constant term of $F$ with respect to $\frac{1}{z}$.

\subsubsection{Further calculations.}\label{furcalc} Define small $I$-function
\begin{align*}
    \overline{\mathds{I}}(q)\in H^*_{\mathsf{T}}(\PP^4,\QQ)[[q]]
\end{align*}
by the restriction

\begin{align*}
    \overline{\mathds{I}}(q)=\mathds{I}(q,t)|_{t=0}\,.
\end{align*}
Define differential operators

\begin{align*}
    \mathsf{D}=q\frac{d}{dq}\,, \,\,\, M=H+z\mathsf{D}\,.
\end{align*}
Applying $z\frac{d}{dt}$ to $\mathds{I}$ and then restricting to $t=0$ has same effect as applying $M$ to $\overline{\mathds{I}}$
\begin{align*}
    \left[\left(z\frac{d}{dq}\right)^k \mathds{I} \right]|_{t=0}=M^k \overline{\mathds{I}}\,.
\end{align*}

The function $\overline{\mathds{I}}$ satisfies the following Picard-Fuchs equation
\begin{align*}
    \left(M^5-1-q(5M+z)(5M+2z)(5M+3z)(5M+4z)(5M+5z)\right)\overline{\mathds{I}}=0
\end{align*}
implied by the Picard-Fuchs equation for $\mathds{I}$,
\begin{align*}
    \left( \left(z\frac{d}{dt}\right)^5-1-q \prod_{k=1}^5\left(5\left( z\frac{d}{dt} \right) +kz\right)  \right) \mathds{I}=0\,.
\end{align*}

The restriction $\overline{\mathds{I}}|_{H=\lambda_i}$ admits the following asymptotic form
\begin{align}
 \label{assym}
 %I_{\T}|_{H=\lambda_i}
 \overline{\mathds{I}}|_{H=\lambda_i}
 = e^{\mu\lambda_i/z}\left( R_0+R_1 \left(\frac{z}{\lambda_i}\right)+R_2 \left(\frac{z}{\lambda_i}\right)^2+\ldots\right)
\end{align}
with series 
$\mu,R_k \in \CC[[q]]$.

A derivation of \eqref{assym} is obtained in \cite{ZaZi} via the Picard-Fuchs equation 
%\eqref{PF} 
for $\overline{\mathds{I}}|_{H=\lambda_i}$. The series $\mu$ and $R_k$ are found by solving differential equations obtained from the coefficient of $z^k$. For example,
\begin{eqnarray*}
    1+ \DD\mu&=& L\, , \\
    R_0&=&L\, , \\
    R_1&=& \frac{3}{20}(L-L^5)\, , \\ 
    R_2&=&\frac{9L}{800}(1-L^4)^2\, , 
\end{eqnarray*}
where $L(q) = (1-5^5q)^{-1/5}$. The specialization \eqref{spez}
is used for these results.

Define the series $C_i$ by the equations
\begin{align}\label{y999}
C_0 & = \mathds{I}|_{z=\infty,t=0,H=1}\,,\\
C_i & = z\frac{d}{dt} \mathds{S}({H^{i-1}})|_{z=\infty,t=0,H=1}\, , \,\,\, \text{for}\,i=1,2,3,4\, .
\end{align}
The following relations were proven in \cite{ZaZi},
\begin{align*}
    C_0C_1C_2C_3C_4&=L^5\,,\\
    C_i&=C_{4-i}\,\,\, \text{for}\,\,\,i=0,1,2,3,4.
\end{align*}
From the equations \eqref{S1} and \eqref{assym}, we can show the series
$$\overline{\mathds{S}}_i(H^k)=\mathds{S}(H^k)|_{H=\lambda_i,t=0}$$
have the following asymptotic expansion:
\begin{align}
\nonumber
\overline{\mathds{S}}_i({1}) & = e^{\frac{\mu\lambda_i}{z}}\frac{1}{C_0} \Big(R_{00}+R_{01}\big(\frac{z}{\lambda_i}\big)+R_{02} \big(\frac{z}{\lambda_i}\big)^2+\ldots\Big) \, ,\\  \label{VS}
\overline{\mathds{S}}_i(H) & = e^{\frac{\mu\lambda_i}{z}} \frac{L\lambda_i}{C_0C_1} \Big(R_{10}+R_{11}\big(\frac{z}{\lambda_i}\big)+R_{12} \big(\frac{z}{\lambda_i}\big)^2+\ldots\Big)\, ,\\ \nonumber
\overline{\mathds{S}}_i(H^2) & = e^{\frac{\mu\lambda_i}{z}} \frac{L^2\lambda_i^2}{C_0C_1 C_2} \Big(R_{20}+R_{21}\big(\frac{z}{\lambda_i}\big)+R_{22} \big(\frac{z}{\lambda_i}\big)^2+\ldots\Big)\, ,\\ \nonumber
\overline{\mathds{S}}_i(H^3) & = e^{\frac{\mu\lambda_i}{z}} \frac{L^3\lambda_i^3}{C_0C_1 C_2C_3}\Big(R_{30}+R_{31}\big(\frac{z}{\lambda_i}\big)+R_{32} \big(\frac{z}{\lambda_i}\big)^2+\ldots\Big)
\, ,\\ \nonumber
\overline{\mathds{S}}_i(H^4) & = e^{\frac{\mu\lambda_i}{z}} \frac{L^4\lambda_i^4}{C_0C_1 C_2C_3C_4}\Big(R_{40}+R_{41}\big(\frac{z}{\lambda_i}\big)+R_{42} \big(\frac{z}{\lambda_i}\big)^2+\ldots\Big)\, .
 \end{align}
We follow here the normalization of \cite{ZaZi}. Note 
$$R_{0k}=R_k\,.$$
As in \cite[Theorem 4]{ZaZi}, we obtain the following constraints.
\begin{Prop}
{\em (Zagier-Zinger \cite{ZaZi})}\label{RPoly}
 For all $k\geq 0$, we have
     $$R_k \in \CC[L^{\pm1}]\, .$$
\end{Prop}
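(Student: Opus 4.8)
The plan is to substitute the asymptotic ansatz \eqref{assym} into the Picard--Fuchs equation for $\overline{\mathds{I}}|_{H=\lambda_i}$ and read off a recursion for the coefficients $R_k$, arranged so that every step takes place inside the subring $\CC[L^{\pm 1}]\subset\CC[[q]]$. The whole argument rests on one algebraic observation: the operator $\mathsf{D}=q\frac{d}{dq}$ preserves $\CC[L^{\pm1}]$. Indeed, from $L^{-5}=1-5^5q$ one gets $q=5^{-5}(1-L^{-5})\in\CC[L^{\pm1}]$ and
\[
\mathsf{D}L=\tfrac{1}{5}\,L(L^5-1)\in\CC[L^{\pm1}]\, ,
\]
so $\CC[L^{\pm1}]$ is a $\mathsf{D}$-stable subring and the induction can be run there.

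First I would conjugate the Picard--Fuchs operator by the exponential prefactor. Writing $M=H+z\mathsf{D}$ and restricting to $H=\lambda_i$, the identity $1+\mathsf{D}\mu=L$ recorded in the excerpt gives
\[
e^{-\mu\lambda_i/z}\,M\,e^{\mu\lambda_i/z}=\lambda_i(1+\mathsf{D}\mu)+z\mathsf{D}=\lambda_i L+z\mathsf{D}=:\widetilde{M}\, .
\]
Substituting $\overline{\mathds{I}}|_{H=\lambda_i}=e^{\mu\lambda_i/z}\Phi$ with $\Phi=\sum_{k\ge0}R_k\,w^k$ and $w=z/\lambda_i$ turns the Picard--Fuchs equation into $\widetilde{P}\,\Phi=0$, where $\widetilde{P}$ is obtained from the operator $M^5-1-q\prod_{k=1}^{5}(5M+kz)$ by replacing each $M$ with $\widetilde{M}$. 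Here $\widetilde{M}=\lambda_i(L+w\mathsf{D})$ and every factor $5\widetilde{M}+kz=\lambda_i(5L+5w\mathsf{D}+kw)$ carries a power of $\lambda_i$; since the specialization \eqref{spez} forces $\lambda_i^5=\zeta^{5i}=1$, all these powers equal $1$ and $\widetilde{P}\Phi=0$ becomes an equation purely in $L$, $w$ and $\mathsf{D}$, with no remaining dependence on $i$ (consistent with the $R_k$ being independent of $i$).

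Next I would extract the coefficient of $w^{n+1}$ in $\widetilde{P}\Phi=0$. A short bookkeeping of how each factor shifts the $w$-degree (a factor $L$ or $5L$ preserves the index, while $w\mathsf{D}$ and $kw$ raise it by one) shows that the undifferentiated top coefficient $R_{n+1}$ cancels identically: its three contributions $L^5$, $-1$, $-(L^5-1)$ sum to zero. What survives is a first-order linear ODE for $R_n$ with coefficients in $\CC[L^{\pm1}]$. Collecting the $\mathsf{D}R_n$ and $R_n$ terms and simplifying with $\mathsf{D}L=\tfrac15 L(L^5-1)$, the equation takes the clean form
\[
(L^5-1)\,L\left(\frac{R_n}{L}\right)'=g_n\, ,\qquad g_n\in\CC[L^{\pm1}]\, ,
\]
where $'=\frac{d}{dL}$ and $g_n$ is an explicit $\CC[L^{\pm1}]$-combination of $R_{n-1},\dots,R_{n-4}$ and their $\mathsf{D}$-derivatives; by the inductive hypothesis $g_n\in\CC[L^{\pm1}]$. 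The base of the induction is supplied by the already-computed values $R_0=L$ and $R_1=\tfrac{3}{20}(L-L^5)$.

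The main obstacle is the integration step: one must show that $\frac{R_n}{L}=\int \frac{g_n}{L(L^5-1)}\,dL$ again lies in $\CC[L^{\pm1}]$. Since $L(L^5-1)$ has only simple zeros (at $L=0$ and at the fifth roots of unity), the integrand has a partial-fraction expansion with at worst simple poles, and its primitive is a Laurent polynomial \emph{precisely} when all residues of $\frac{g_n}{L(L^5-1)}$ at $L=0$ and at $L^5=1$ vanish. Verifying this residue vanishing is the heart of the matter and is exactly where the special arithmetic of the quintic enters; I would establish it inductively, using the precise shape of $g_n$ produced by the recursion together with the relations among the $C_i$ and the $R_k$ recorded in the excerpt, following the argument of Zagier--Zinger \cite{ZaZi}. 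Once the residues are shown to vanish, the primitive is determined up to the single additive constant absorbed by a term $cL$; this constant is pinned down by matching the $z=\infty$ normalization of $\overline{\mathds{I}}$ (equivalently, the known leading values of the $R_k$), completing the induction and hence the proof that $R_k\in\CC[L^{\pm1}]$ for all $k\geq 0$.
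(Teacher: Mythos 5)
The paper does not actually prove this proposition: it is stated as a quoted result, ``(Zagier--Zinger [ZaZi])'', with the only indication of method being the remark that $\mu$ and the $R_k$ are ``found by solving differential equations obtained from the coefficient of $z^k$'' in the Picard--Fuchs equation. Your set-up is a correct reconstruction of that derivation: the conjugation $e^{-\mu\lambda_i/z}Me^{\mu\lambda_i/z}=\lambda_i L+z\mathsf{D}$, the observation that $\lambda_i^5=1$ removes the $i$-dependence, the $\mathsf{D}$-stability of $\CC[L^{\pm1}]$ via $\mathsf{D}L=\tfrac15 L(L^5-1)$, the cancellation $L^5-1-(L^5-1)=0$ of the undifferentiated top coefficient $R_{n+1}$, and the resulting first-order linear ODE for $R_n$ whose only obstruction to having a Laurent-polynomial solution is the vanishing of the residues of $g_n/\bigl(L(L^5-1)\bigr)$ at $L=0$ and at the fifth roots of unity. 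All of that is right, and it is the standard frame in which [ZaZi, Theorem 4] is proved.

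The gap is that the residue vanishing \emph{is} the theorem, and your proposal does not prove it: at the decisive moment you write that you ``would establish it inductively \ldots following the argument of Zagier--Zinger.'' Everything before that point is formal bookkeeping valid for any hypergeometric datum, whereas for a generic datum the logarithms do \emph{not} cancel; the cancellation here reflects the special arithmetic of the quintic. Moreover, an ``inductive verification'' is not obviously available from what you have written: the relevant residues sit at $L=0$ and at the primitive fifth roots of unity, i.e.\ away from $q=0$, where the power-series definition of the $R_k$ gives no direct information, so one cannot just evaluate them from the recursion; Zagier and Zinger need a genuinely separate argument to kill them. So as a self-contained proof the proposal has a hole exactly at the step that carries all the content. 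As a reduction of the proposition to the cited result it is fine --- and in that sense it is on the same footing as the paper, which likewise offers no proof beyond the attribution to [ZaZi].
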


Define generators
$$    \mathcal{X} = \frac{\mathsf{D}C_0}{C_0}\,,\ \    \mathcal{X}_1 = \mathsf{D} \mathcal{X}\,,\ \  
    \mathcal{X}_2 = \mathsf{D}\mathcal{X}_1\,,\ \ 
    \mathcal{Y} =\frac{\mathsf{D}C_1}{C_1}\,.$$
From \eqref{S1}, we obtain the following result.
\begin{Lemma}\label{RR} For $k\ge 0$ wee have
  \begin{align}
      \nonumber R_{1\, k+1}= &R_{0\, k+1}+\frac{\mathsf{D}R_{0 k}}{L}-\frac{\mathcal{X}}{L}R_{0k}\,, \\
      \nonumber R_{2\, k+1}= &R_{1\, k+1}+\frac{\mathsf{D}R_{1k}}{L}-\frac{\mathcal{X}}{L}R_{1k}-\frac{\mathcal{Y}}{L}R_{1k}+\frac{\mathsf{D}L}{L^2}R_{1k}\,, \\
      R_{3\, k+1}= &R_{2\, k+1}+\frac{\mathsf{D}R_{2k}}{L}+\frac{\mathcal{X}}{L}R_{2k}+\frac{\mathcal{Y}}{L}R_{2k}-3\frac{\mathsf{D}L}{L^2}R_{2k}\,, \\
      \nonumber R_{4\, k+1}= &R_{3\, k+1}+\frac{\mathsf{D}R_{3k}}{L}+\frac{\mathcal{X}}{L}R_{3k}-2\frac{\mathsf{D}L}{L^2}R_{3k}\,, \\
      \nonumber R_{0\,k+1}=&R_{4\,k+1}+\frac{\mathsf{D}R_{4k}}{L}-\frac{\mathsf{D}L}{L^2}R_{4k}\,.
  \end{align}
\end{Lemma}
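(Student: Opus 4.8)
The plan is to read all five recursions off the Birkhoff relations \eqref{S1}, which in the form $z\frac{d}{dt}\mathds{S}(H^{j})=C_{j+1}\mathds{S}(H^{j+1})$ for $j=0,1,2,3$, together with the final (wrap-around) relation of \eqref{S1} expressing $\mathds{S}(1)$ through $z\frac{d}{dt}\mathds{S}(H^4)$, organize the whole tower. First I would restrict each relation to $t=0$ and invoke the identity of Section \ref{furcalc} that replaces the operator $z\frac{d}{dt}$ at $t=0$ by $M=H+z\mathsf{D}$. This turns \eqref{S1} into $q$-side recursions of the shape $C_{j+1}\,\overline{\mathds{S}}(H^{j+1})=M\,\overline{\mathds{S}}(H^{j})$, so that everything is reduced to applying a single first-order operator to the restricted series.

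Next I would restrict to $H=\lambda_i$, so that $M$ becomes $\lambda_i+z\mathsf{D}$, and substitute the asymptotic ansatz \eqref{VS}. The exponential $e^{\mu\lambda_i/z}$ is removed by the conjugation $e^{-\mu\lambda_i/z}(\lambda_i+z\mathsf{D})e^{\mu\lambda_i/z}=\lambda_i L+z\mathsf{D}$, which uses only the identity $1+\mathsf{D}\mu=L$ from Section \ref{furcalc}. Applying $\mathsf{D}$ by the product rule then splits into three contributions: hitting the exponential yields the factor $L$; hitting the rational prefactor $\frac{L^{j}\lambda_i^{j}}{C_0\cdots C_j}$ yields exactly the generators $\mathcal{X}=\frac{\mathsf{D}C_0}{C_0}$, $\mathcal{Y}=\frac{\mathsf{D}C_1}{C_1}$ and the terms in $\frac{\mathsf{D}L}{L}$; and hitting $\sum_k R_{jk}(z/\lambda_i)^{k}$ yields $\mathsf{D}R_{jk}$. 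Equating the coefficient of $(z/\lambda_i)^{k+1}$ on the two sides, after cancelling the common prefactor of $\overline{\mathds{S}}_i(H^{j+1})$, gives the recursion for each $j$. The intermediate logarithmic derivatives $\frac{\mathsf{D}C_2}{C_2},\frac{\mathsf{D}C_3}{C_3}$ that show up for $j=1,2,3$ must be eliminated using the two relations $C_i=C_{4-i}$ and $C_0C_1C_2C_3C_4=L^5$, rewriting everything in terms of $\mathcal{X}$, $\mathcal{Y}$, and $\frac{\mathsf{D}L}{L}$; this is what converts the raw output into the precise combinations displayed in the Lemma.

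The step I expect to be the genuine obstacle is the fifth, wrap-around recursion producing $R_{0\,k+1}$ from the $R_{4\,k}$. Unlike the first four, the closing relation of \eqref{S1} does not come from a naive index shift: it encodes the fact that the tower closes after five steps, which is precisely the Picard--Fuchs equation $\big(M^5-1-q\prod_{a=1}^{5}(5M+az)\big)\overline{\mathds{I}}=0$ of Section \ref{furcalc}. I would use it to rewrite $M^5\overline{\mathds{I}}$ and thereby identify $M\,\overline{\mathds{S}}(H^4)$ with a multiple of $\overline{\mathds{S}}(1)$; the factor $\lambda_i^{5}=1$ from the specialization \eqref{spez} is what forces the closing normalization and the prefactor to cycle back to $\frac{1}{C_0}$. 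The leftover $q$ from the Picard--Fuchs correction is then converted into $L$ through $5^5q=1-L^{-5}$, equivalently $\frac{\mathsf{D}L}{L}=\frac{L^{5}-1}{5}$, after which matching coefficients gives the final recursion. Proposition \ref{RPoly} can be used in tandem to confirm that the recursions keep all $R_{jk}$ inside $\mathbb{C}[L^{\pm1}]$, so that the only denominators produced are powers of $L$.
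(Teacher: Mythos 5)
Your derivation is correct and is essentially the argument the paper intends: Lemma \ref{RR} is stated with only the remark ``From \eqref{S1}, we obtain the following result,'' and your computation --- restricting \eqref{S1} to $t=0$, replacing $z\frac{d}{dt}$ by $M=H+z\mathsf{D}$, conjugating away $e^{\mu\lambda_i/z}$ via $1+\mathsf{D}\mu=L$, substituting \eqref{VS}, and eliminating $\frac{\mathsf{D}C_2}{C_2},\frac{\mathsf{D}C_3}{C_3}$ with $C_i=C_{4-i}$ and $C_0C_1C_2C_3C_4=L^5$ --- is exactly that derivation. For the wrap-around relation your appeal to the Picard--Fuchs equation is sound but slightly more than needed: the last line of \eqref{S1} can be used directly, and the closing then follows from $\lambda_i^5=1$ together with $\sum_{r=0}^4\frac{\mathsf{D}C_r}{C_r}=5\frac{\mathsf{D}L}{L}$.
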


Applying Lemma \ref{RR} for $k=0,1$, we obtain the following  two equations among above generators which were also proven in \cite[Section 3.1]{YY}. First,
\begin{eqnarray}\label{drule}
    \mathsf{D}\mathcal{Y}&=& \frac{2}{5}(L^5-1)+2(L^5-1)\mathcal{X}-2\mathcal{X}^2-4\mathcal{X}_1\\ & &
    \nonumber +(L^5-1)\mathcal{Y}-\mathcal{Y}^2
-2\mathcal{X}\mathcal{Y}  \, .
\end{eqnarray}
For the second equation, define{\footnote{We follow here the notation of  \cite{YY} for $B_k$.}}
\begin{align*}
    B_1&=-5\mathcal{X}\,,\\
    B_2&=5^2(\mathcal{X}_1+\mathcal{X}^2)\,,\\
    B_3&=-5^3(\mathcal{X}_2+3\mathcal{X}\mathcal{X}_1+\mathcal{X}^3)\,,\\
    B_4&=5^4(\mathsf{D}\mathcal{X}_2+4\mathcal{X}\mathcal{X}_2+3\mathcal{X}_1^2+6\mathcal{X}^2\mathcal{X}_1+\mathcal{X}^4)\,.
\end{align*}
Then, we have
\begin{align}\label{drule2}
    B_4=-(L^5-1)(10B_3-35B_2+50B_1-24)\, .
\end{align}

For the proof of first holomorphic anomaly equation, we will
require the following generalization of Proposition \ref{RPoly}.
\begin{Prop}\label{RPoly2}
For all $k \ge 0$, we have
\begin{itemize}\label{Rpoly}
    \item[(i)] $R_{1k}\in \CC[L^{\pm 1}][\mathcal{X}]$ ,
    \item[(ii)]  $R_{2k}=Q_{2k}-\frac{R_{1\,k-1}}{L}\mathcal{Y}$, with
    $Q_{2k}\in \CC[L^{\pm 1}][\mathcal{X},\mathcal{X}_1]$ ,
    \item[(iii)] $R_{3k},R_{4k} \in \CC[L^{\pm 1}][\mathcal{X},\mathcal{X}_1,\mathcal{X}_2]$ .
\end{itemize}
\end{Prop}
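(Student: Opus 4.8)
The plan is to argue by induction on $k$, using the five recursions of Lemma \ref{RR} to pass from the tuple $(R_{1k},R_{2k},R_{3k},R_{4k})$ at level $k$ to level $k+1$, and taking the Zagier--Zinger result (Proposition \ref{RPoly}), namely $R_{0k}\in\CC[L^{\pm1}]$ for all $k$, as the known input. The base cases $k=0$, and $k=1$ where the index $k-1$ first genuinely appears in (ii), are checked directly from the expansions \eqref{VS}, with $R_{1,-1}$ interpreted as $0$.

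A preliminary observation that organizes everything is that the ring $\CC[L^{\pm1}][\mathcal{X},\mathcal{X}_1,\mathcal{X}_2]$ is closed under $\mathsf{D}=q\frac{d}{dq}$. Indeed, from $L=(1-5^5q)^{-1/5}$ one computes $\mathsf{D}L=\tfrac{1}{5}L(L^5-1)$, so $\mathsf{D}$ preserves $\CC[L^{\pm1}]$; moreover $\mathsf{D}\mathcal{X}=\mathcal{X}_1$ and $\mathsf{D}\mathcal{X}_1=\mathcal{X}_2$ by definition, while $\mathsf{D}\mathcal{X}_2$ lies in $\CC[L^{\pm1}][\mathcal{X},\mathcal{X}_1,\mathcal{X}_2]$ by solving \eqref{drule2} for $B_4$ and hence for $\mathsf{D}\mathcal{X}_2$. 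With this closure in hand, parts (i) and (ii) are immediate. The first recursion writes $R_{1,k+1}$ as $R_{0,k+1}$ plus $L^{-1}$ times $\mathsf{D}$ and $\mathcal{X}$ applied to $R_{0k}\in\CC[L^{\pm1}]$, which lands in $\CC[L^{\pm1}][\mathcal{X}]$. The second recursion exhibits $R_{2,k+1}$ as a term $Q_{2,k+1}\in\CC[L^{\pm1}][\mathcal{X},\mathcal{X}_1]$ together with the single $\mathcal{Y}$-contribution $-\frac{R_{1k}}{L}\mathcal{Y}$ coming from the summand $-\frac{\mathcal{Y}}{L}R_{1k}$, which is precisely the shape asserted in (ii) at level $k+1$.

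The heart of the argument is part (iii), and the main obstacle is to show that \emph{all} $\mathcal{Y}$-dependence disappears from $R_{3,k+1}$. Substituting the form $R_{2k}=Q_{2k}-\frac{R_{1,k-1}}{L}\mathcal{Y}$ and the expression for $R_{2,k+1}$ into the third recursion, and expanding $\mathsf{D}R_{2k}$ by the Leibniz rule together with the explicit formula \eqref{drule} for $\mathsf{D}\mathcal{Y}$ (which carries $\mathcal{Y}$ and $\mathcal{Y}^2$ terms), one collects the coefficients of $\mathcal{Y}^2$ and of $\mathcal{Y}$. The $\mathcal{Y}^2$ coefficient cancels immediately: the contribution $+\frac{R_{1,k-1}}{L^2}\mathcal{Y}^2$ from $\frac{\mathsf{D}R_{2k}}{L}$, produced by the $-\mathcal{Y}^2$ term of $\mathsf{D}\mathcal{Y}$, is killed by the contribution $-\frac{R_{1,k-1}}{L^2}\mathcal{Y}^2$ from $\frac{\mathcal{Y}}{L}R_{2k}$. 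The linear $\mathcal{Y}$ coefficient is more delicate; after rewriting $\mathsf{D}(R_{1,k-1}/L)$ via $\mathsf{D}L=\tfrac15 L(L^5-1)$ and then substituting the explicit expression for $Q_{2k}$ furnished by part (ii), every surviving monomial cancels in pairs and the total linear $\mathcal{Y}$ coefficient vanishes identically. What remains of $R_{3,k+1}$ is then manifestly in $\CC[L^{\pm1}][\mathcal{X},\mathcal{X}_1,\mathcal{X}_2]$, using the closure property above.

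Finally, $R_{4,k+1}$ is handled by the fourth recursion: once $R_{3k}$ and $R_{3,k+1}$ lie in $\CC[L^{\pm1}][\mathcal{X},\mathcal{X}_1,\mathcal{X}_2]$, the terms $\frac{\mathsf{D}R_{3k}}{L}$, $\frac{\mathcal{X}}{L}R_{3k}$ and $\frac{\mathsf{D}L}{L^2}R_{3k}$ all remain in that ring by closure, so $R_{4,k+1}$ does too. The fifth recursion, expressing $R_{0,k+1}$ through $R_{4,k+1}$ and $R_{4k}$, is then an automatic consistency check rather than something to be reproved, since $R_{0,k+1}\in\CC[L^{\pm1}]$ is already known from Proposition \ref{RPoly}. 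I expect the only real labor to be the bookkeeping in the linear-$\mathcal{Y}$ cancellation for $R_{3,k+1}$, where the precise rational coefficients in Lemma \ref{RR} and in \eqref{drule} must conspire; the computation sketched above confirms that they do.
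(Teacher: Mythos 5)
Your proposal is correct and follows essentially the same route as the paper: both rest on the recursions of Lemma \ref{RR} together with the relations \eqref{drule}, \eqref{drule2} and the Zagier--Zinger input $R_{0k}\in\CC[L^{\pm1}]$, the only difference being that you organize the computation as an induction and explicitly isolate the $\mathcal{Y}^2$- and linear-$\mathcal{Y}$-cancellations in $R_{3,k+1}$, which the paper performs implicitly and ``leaves to the reader.'' I checked your linear-$\mathcal{Y}$ bookkeeping: the coefficient reduces to $\frac{Q_{2k}}{L}-\frac{R_{1k}}{L}-\frac{\mathsf{D}R_{1,k-1}}{L^2}-\frac{(L^5-1)R_{1,k-1}}{5L^2}+\frac{\mathcal{X}R_{1,k-1}}{L^2}$, which vanishes precisely by the formula for $Q_{2k}$ from part (ii) and $\mathsf{D}L=\tfrac{1}{5}L(L^5-1)$, so the claimed conspiracy of coefficients does hold.
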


\begin{proof}

 \begin{itemize}
     \item [(i)] Using Lemma \ref{RR}, we can calculate
     $$R_{1\,k+1}=\frac{\mathsf{D}R_{0k}}{L}+R_{0\,k+1}-\frac{R_{0k}}{L}\mathcal{X}\,.$$
     \item [(ii)] Using Lemma \ref{RR} and relations \eqref{drule}, we can calculate
     \begin{multline*}
         R_{2\,k+2}=\frac{\mathsf{D}^2R_{0k}}{L^2}-\frac{R_{0\,k+1}}{5L}+\frac{L^4 R_{0\,k+1}}{5}+\frac{2\mathsf{D}R_{0\,k+1}}{L}+R_{0\,k+2}\\-\frac{2\mathsf{D}R_{0k}\mathcal{X}}{L^2}-\frac{2 R_{0\,k+1}\mathcal{X}}{L}+\frac{R_{0k}\mathcal{X}^2}{L^2}-\frac{R_{0k}\mathcal{X}_1}{L^2}\\
         +\frac{-\mathsf{D}R_{0k}-LR_{0\,k+1}+R_{0k}\mathcal{X}}{L^2}\mathcal{Y}\,.
     \end{multline*}
     \item[(iii)] We can also explicitly calculate $R_{3k}$ and $R_{4k}$ in terms of $$R_{0k}\,,\ R_{0\,k-1}\,,\ R_{0\,k-2}\,,\ 
     \mathcal{X}\,,\ \mathcal{X}_1\,,\ \mathcal{X}_2\,,\  \mathcal{Y}$$ using Lemma \ref{RR} and relations \eqref{drule} and \eqref{drule2}. We can check (iii) using these explicit calculations and Proposition \ref{RPoly}. We leave the details to the reader.
 \end{itemize}
\end{proof}

For the proof of second holomorphic anomaly equation, we 
will require the following result.

\begin{Prop}\label{RPoly3}
 For all $k\ge 0$, we have
 \begin{itemize}
     \item[(i)] $R_{1k}=P_{0k}-\frac{R_{0\,k-1}}{L}\mathcal{X}$, with $P_{0k}\in \CC[L^{\pm 1}]$,
     \item[(ii)] $R_{2k}\in \CC[L^{\pm 1}][A_2,A_4]$,
     \item[(iii)] $R_{3k}=P_{3k}-\frac{R_{2\,k-1}}{L}\mathcal{X}$ with $P_{3k}\in\CC[L^{\pm 1}][A_2,A_4,A_6]$,
     \item[(iv)] $R_{4k}\in\CC[L^{\pm 1}][A_2,A_4,A_6]$.
 \end{itemize}
\end{Prop}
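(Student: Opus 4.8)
The plan is to establish the four statements in sequence, each for all $k$, by running the recursions of Lemma~\ref{RR} for $i=1,2,3,4$ with the polynomiality $R_{0k}\in\CC[L^{\pm1}]$ of Proposition~\ref{RPoly} fed in at every level: (i) will follow directly from the first recursion and Proposition~\ref{RPoly}, (ii) from the second recursion using (i), (iii) from the third using (ii), and (iv) from the fourth using (iii). The conceptual step beyond Proposition~\ref{RPoly2} is to change variables from the generators $\mathcal{X},\mathcal{X}_1,\mathcal{X}_2,\mathcal{Y}$ to the special-geometry series $A_2,A_4,A_6$, keeping $\mathcal{X}$ (equivalently $K_2=-\mathcal{X}/L^5$) as the only remaining free coordinate. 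Inverting the definitions of $A_2,A_4,A_6$ gives a triangular, invertible dictionary
\begin{align*}
\mathcal{Y}&=-5L^5A_2-2\mathcal{X}-\tfrac35,\\
\mathcal{X}_1&=25L^{10}A_4+\mathcal{X}^2+\mathcal{X}\mathcal{Y}-\tfrac2{25},\\
\mathcal{X}_2&=250L^{15}A_6-(\text{polynomial in }\mathcal{X},\mathcal{X}_1,\mathcal{Y}),
\end{align*}
so that $\CC[L^{\pm1}][\mathcal{X},\mathcal{X}_1,\mathcal{X}_2,\mathcal{Y}]=\CC[L^{\pm1}][A_2,A_4,A_6][\mathcal{X}]$. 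In these terms the Proposition asserts precisely that $R_{2k},R_{4k}$ carry no bare $\mathcal{X}$ (this is the $K_2$-independence needed for the second holomorphic anomaly equation), while $R_{1k},R_{3k}$ involve $\mathcal{X}$ only through the single explicit term $-R_{i-1\,k-1}\mathcal{X}/L$.

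The technical heart is a closure lemma: the subring $\CC[L^{\pm1}][A_2,A_4,A_6]$ is stable under $\mathsf{D}$, i.e. $\mathsf{D}A_2\in\CC[L^{\pm1}][A_2,A_4]$ and $\mathsf{D}A_4,\mathsf{D}A_6\in\CC[L^{\pm1}][A_2,A_4,A_6]$. Here the relations \eqref{drule} and \eqref{drule2} are indispensable: \eqref{drule} rewrites $\mathsf{D}\mathcal{Y}$ with no escape to higher powers of $\mathcal{Y}$, and \eqref{drule2} rewrites $\mathsf{D}\mathcal{X}_2$ with no escape to $\mathcal{X}_3$, so that the derivative stays within the three chosen generators. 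Already the case of $\mathsf{D}A_2$ is instructive: after applying \eqref{drule} and the dictionary, the potentially obstructing quadratic terms assemble into $\tfrac15(2\mathcal{X}+\mathcal{Y})^2=\tfrac15(5L^5A_2+\tfrac35)^2$ (using $2\mathcal{X}+\mathcal{Y}=-(5L^5A_2+\tfrac35)$), in which every bare $\mathcal{X}$ cancels, leaving $\mathsf{D}A_2\in\CC[L^{\pm1}][A_2,A_4]$. The same mechanism — cancellation of $\mathcal{X}$-dependence forced by the Calabi--Yau relations \eqref{drule}, \eqref{drule2} — is what will drive the even-index statements (ii) and (iv).

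With the dictionary and the closure lemma in hand, the deductions are mechanical. The base case uses the convention $R_{\,\cdot\,,-1}=0$ (the asymptotic series \eqref{VS} are power series in $z$, with no term of index $-1$), so at $k=0$ every correction term vanishes and (i)--(iv) reduce to $R_{i0}\in\CC[L^{\pm1}]$, which holds for the leading coefficients. For general $k$: statement (i) is immediate from the first recursion with $P_{0k}=R_{0k}+\mathsf{D}R_{0\,k-1}/L\in\CC[L^{\pm1}]$ and $\mathcal{X}$-coefficient $-R_{0\,k-1}/L$; statement (ii) follows from the second recursion after inserting (i) at levels $k$ and $k-1$, computing $\mathsf{D}R_{1\,k-1}$ (which produces $\mathcal{X}_1=\mathsf{D}\mathcal{X}$), and applying the dictionary and closure lemma; (iii) and (iv) follow in turn from the third and fourth recursions. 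The main obstacle throughout is exactly the cancellation of the bare $\mathcal{X}$ (and $\mathcal{X}^2$, $\mathcal{X}\mathcal{Y}$) terms in the even cases, together with the verification that the surviving $\mathcal{X}$-coefficient in the odd cases is precisely $-R_{i-1\,k-1}/L$; both are governed by the structure already exhibited in the $\mathsf{D}A_2$ computation. The remaining bookkeeping — in particular tracking how $\mathsf{D}$ acting on the correction term $-R_{i-1\,k-1}\mathcal{X}/L$ contributes at the next level — is routine and can be left to the reader, as in the proof of Proposition~\ref{RPoly2}.
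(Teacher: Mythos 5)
Your proposal is correct and follows essentially the same route as the paper, whose proof is simply the remark that the statement ``follows from the explicit calculations in the proof of Proposition~\ref{RPoly2} and the definition of $A_2,A_4,A_6$'' --- i.e.\ exactly the regrouping you carry out, with the combinations $2\mathcal{X}+\mathcal{Y}=-5L^5A_2-\tfrac{3}{5}$ and $\mathcal{X}_1-\mathcal{X}^2-\mathcal{X}\mathcal{Y}=25L^{10}A_4-\tfrac{2}{25}$ absorbing the bare generators, and with \eqref{drule} and \eqref{drule2} preventing escape to $\mathcal{Y}^2$ and $\mathcal{X}_3$. Your triangular dictionary $\CC[L^{\pm1}][\mathcal{X},\mathcal{X}_1,\mathcal{X}_2,\mathcal{Y}]=\CC[L^{\pm1}][A_2,A_4,A_6][\mathcal{X}]$ together with the $\mathsf{D}$-closure of $\CC[L^{\pm1}][A_2,A_4,A_6]$ is just a cleaner packaging of the same cancellations (I checked the key instances: the $\mathcal{X}$-coefficient in the $R_{2\,k+1}$ recursion is exactly twice the $\mathcal{Y}$-coefficient precisely because $P_{0k}=R_{0k}+\mathsf{D}R_{0\,k-1}/L$, and the analogous cancellation in $R_{4\,k+1}$ works out), so no gap.
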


\begin{proof}
 The proof follows from the explicit calculations in the proof of Proposition \ref{RPoly2} and the definition of $A_2,A_4,A_6$.
\end{proof}

\section{Higher genus series on $\overline{M}_{g,n}$}\label{hgi}

 \subsection{Intersection theory on $\overline{M}_{g,n}$} \label{intmg}
% In \cite{Elliptic}, genus 1 correlators are calculated in terms of genus 0 correlators. 
 We review here the now standard method used by Givental \cite{Elliptic,SS,Book} to 
 express genus $g$ descendent correlators in terms of genus 0 data.
 We refer the reader to \cite[Section 4.1]{LP1}
 for a more leisurely treatment.
 
 Let $t_0,t_1,t_2, \ldots$ be formal variables. The series
 $$T(c)=t_0+t_1 c+t_2 c^2+\ldots$$   in the
additional variable $c$ plays a basic role. The variable $c$
will later be  replaced by the first Chern class $\psi_i$ of
 a cotangent line  over $\overline{M}_{g,n}$, 
 $$T(\psi_i)= t_0 + t_1\psi_i+ t_2\psi_i^2 +\ldots\, ,$$
 with the index $i$
 depending on the position of the series $T$ in the correlator.

Let $2g-2+n>0$.
For $a_i\in \mathbb{Z}_{\geq 0}$ and  $\gamma \in H^*(\overline{M}_{g,n})$, define the correlator 
%$\langle \psi^{a_1},\psi^{a_2},...,\psi^{a_n}\, |\, \gamma\rangle_{g,n}$ by
\begin{multline*}
    \lann \psi^{a_1},\ldots,\psi^{a_n}\, | \, \gamma\,  \rann_{g,n}=
    \sum_{k\geq 0} \frac{1}{k!}\int_{\overline{M}_{g,n+k}}
    \gamma \, \psi_1^{a_1}\cdots 
     \psi_n^{a_n}  \prod_{i=1}^k T(\psi_{n+i})\, . 
\end{multline*}
Here, $\gamma$ also denotes the pull-back of $\gamma$ via the morphism
$$\overline{M}_{g,n+k}\rightarrow\overline{M}_{g,n}\,$$
defined by forgetting the last $k$ points.
In the above summation,
the $k=0$ term is $$\int_{\overline{M}_{g,n}}\gamma\, \psi_1^{a_1}\cdots\psi_n^{a_n}\,.$$
We also need the following correlator defined for the unstable case,

$$\lan\lan 1,1 \ran\ran_{0,2}=\sum_{k > 0}\frac{1}{k!}\int_{\overline{M}_{0,2+k}}\prod_{i=1}^k T(\psi_{2+i})\,.$$

For formal variables $x_1,\ldots,x_n$, we also define the correlator
\begin{align}\label{derf}
\lannn \frac{1}{x_1-\psi},\ldots,\frac{1}{x_n-\psi}\, \Big| \, \gamma \, \rannn_{g,n}
\end{align}
in the standard way by expanding $\frac{1}{x_i-\psi}$ as a geometric series.

Denote by $\mathds{L}$ the differential operator 
\begin{align*}
        \mathds{L}\, =\, 
        \frac{\partial}{\partial t_0}-\sum_{i=1}^\infty t_i\frac{\partial}{\partial t_{i-1}}
        \, =\, \frac{\partial}{\partial t_0}-t_1\frac{\partial}{\partial t_0}-t_2\frac{\partial}{\partial t_1}-\ldots
        \, .
\end{align*}
 The string equation yields the following result.
 
\begin{Lemma} \label{stst} For $2g-2+n>0$ and $\gamma \in H^*(\overline{M}_{g,n})$, we have
$$\mathds{L}\lann 1,\ldots,1\, | \, \gamma\, \rann_{g,n}=0\, ,$$ 
\begin{multline*}
\mathds{L} \lannn \frac{1}{x_1-\psi},\ldots,\frac{1}{x_n-\psi}\, \Big| \,\gamma \, 
\rannn_{g,n}= \\
 \left(\frac{1}{x_1}+\ldots +\frac{1}{x_n}\right)
 \lannn\frac{1}{x_1-\psi},\ldots \frac{1}{x_n-\psi}\, \Big| \, \gamma \, \rannn_{g,n}\, .
 \end{multline*}
\end{Lemma}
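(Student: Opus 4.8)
The plan is to deduce both identities from the classical string equation by expanding all insertions into monomials in $\psi$. Writing $\frac{1}{x_i-\psi}=\sum_{a_i\geq 0}x_i^{-(a_i+1)}\psi^{a_i}$, it suffices to understand the action of $\mathds{L}$ on the basic descendent correlator $\lann\psi^{a_1},\ldots,\psi^{a_n}\,|\,\gamma\rann_{g,n}$, after which both statements follow by resummation. The first identity is then the special case $a_1=\cdots=a_n=0$.

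First I would compute $\frac{\partial}{\partial t_0}$. Since $\frac{\partial}{\partial t_0}T(\psi)=1$, differentiating $\prod_{i=1}^k T(\psi_{n+i})$ and using the symmetry of $\overline{M}_{g,n+k}$ in the integrated points converts the $\frac{1}{k!}$-weighted sum into a sum over configurations with one distinguished extra marking $\star$ carrying the insertion $\psi_\star^0=1$; after the index shift $k\mapsto k+1$ this reads
\begin{equation*}
\frac{\partial}{\partial t_0}\lann\psi^{a_1},\ldots,\psi^{a_n}\,|\,\gamma\rann_{g,n}=\sum_{k\geq 0}\frac{1}{k!}\int_{\overline{M}_{g,n+1+k}}\gamma\,\psi_1^{a_1}\cdots\psi_n^{a_n}\prod_{i=1}^k T(\psi_{n+1+i}).
\end{equation*}
I would then apply the string equation to the forgetful map $\pi$ that drops $\star$. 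Expanding each $T(\psi_{n+1+i})=\sum_{d}t_d\psi_{n+1+i}^{d}$ into monomials, the string equation applies \emph{exactly} to each monomial $\gamma\cdot\prod\psi^{\bullet}$: because $\gamma$ is pulled back from $\overline{M}_{g,n}$ the projection formula removes it cleanly and contributes no $\psi_\star$, so $\pi_*$ simply lowers by one each positive $\psi$-exponent, whether it sits at a fixed marking $1,\ldots,n$ or at an integrated marking.

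The lowering at the integrated markings reassembles into $\sum_{i\geq1}t_i\frac{\partial}{\partial t_{i-1}}$: after collecting monomials, lowering the exponent at an integrated point replaces its factor $T(\psi)=\sum_d t_d\psi^d$ by $\sum_{d\geq1}t_d\psi^{d-1}$, which is precisely the series produced by $\sum_{i\geq1}t_i\frac{\partial}{\partial t_{i-1}}$ acting on that factor. Hence these terms cancel in $\mathds{L}=\frac{\partial}{\partial t_0}-\sum_{i\geq1}t_i\frac{\partial}{\partial t_{i-1}}$, and what survives is exactly the lowering at the fixed markings:
\begin{equation*}
\mathds{L}\lann\psi^{a_1},\ldots,\psi^{a_n}\,|\,\gamma\rann_{g,n}=\sum_{j:\,a_j\geq1}\lann\psi^{a_1},\ldots,\psi^{a_j-1},\ldots,\psi^{a_n}\,|\,\gamma\rann_{g,n}.
\end{equation*}
For $a_1=\cdots=a_n=0$ the right-hand side is empty, giving the first identity. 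For the second, I would resum against $\prod_i x_i^{-(a_i+1)}$ and use $\sum_{a\geq1}x^{-(a+1)}\psi^{a-1}=\frac{1}{x}\sum_{b\geq0}x^{-(b+1)}\psi^{b}=\frac{1}{x}\cdot\frac{1}{x-\psi}$, so that lowering at marking $j$ becomes multiplication by $\frac{1}{x_j}$, producing the stated $\big(\frac{1}{x_1}+\cdots+\frac{1}{x_n}\big)$ prefactor.

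The one point requiring care, and the main obstacle, is the bookkeeping that identifies the integrated-point contribution with $\sum_{i\geq1}t_i\frac{\partial}{\partial t_{i-1}}$: one must track the $\frac{1}{k!}$ symmetry factors and the index shift $k\mapsto k+1$ consistently, and verify that no correction terms enter from the forgetful map. The latter is guaranteed by the stability hypothesis $2g-2+n>0$, which makes $\pi$ a genuine forgetful morphism and makes the classical string equation hold with no boundary corrections, together with the fact that $\gamma$ and the distinguished insertion $\psi_\star^0=1$ carry no $\psi_\star$ dependence.
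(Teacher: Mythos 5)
Your proof is correct and follows exactly the route the paper intends: the paper simply states that the string equation yields the lemma (deferring details to \cite[Section 4.1]{LP1}), and your argument is a careful spelling-out of that derivation, with the $t_0$-differentiation producing the extra marking, the string equation lowering exponents, and the integrated-marking terms cancelling against $\sum_{i\geq 1}t_i\frac{\partial}{\partial t_{i-1}}$. The bookkeeping of the $\frac{1}{k!}$ factors, the index shift, and the resummation into the $\big(\frac{1}{x_1}+\cdots+\frac{1}{x_n}\big)$ prefactor are all handled correctly.
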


%After the restriction $t_0=0$ and application of the dilaton equation,
%the correlators are expressed in terms of %finitely many integrals (by the
%dimension constraint). For example,
%\begin{eqnarray*}
%    \lann 1,1,1\rann_{0,3}\, |_{t_0=0} &= &\frac{1}{1-t_1}\, ,\\
%    \lann 1,1,1,1\rann_{0,4}\, |_{t_0=0}& =&\frac{t_2}{(1-t_1)^3}\, ,\\
%    \lann 1,1,1,1,1\rann_{0,5}\, |_{t_0=0}&=&\frac{t_3}{(1-t_1)^4}+\frac{3 t_2^2}{(1-t_1)^5}\, ,\\
%    \lann 1,1,1,1,1,1\rann_{0,6}\, |_{t_0=0}&=&\frac{t_4}{(1-t_1)^5}+\frac{10 t_2 t_3}{(1-t_1)^6}+\frac{15 t^3_2}{(1-t_1)^7}\, .
%\end{eqnarray*}\\

We consider 
$\CC(t_1)[t_2,t_3,...]$
as $\ZZ$-graded ring over $\CC(t_1)$ with 
$$\text{deg}(t_i)=i-1\ \ \text{for $i\geq 2$ .}$$
Define a subspace of homogeneous elements by
$$\CC\left[\frac{1}{1-t_1}\right][t_2,t_3,\ldots]_{\text{Hom}} \subset 
\CC(t_1)[t_2,t_3,...]\, .
$$
After the restriction $t_0=0$ and application of the dilaton equation, the correlators are expressed in terms of finitely many integrals  (by the dimension constraints). From this,
we easily see 
$$\lann \psi^{a_1},\ldots,\psi^{a_n}\, | \, \gamma \, \rann_{g,n}\, |_{t_0=0}\ \in\
\CC\left[\frac{1}{1-t_1}\right][t_2,t_3,\ldots]_{\text{Hom}}\, .$$
%\subset
%\CC(t_1)[t_2,t_3,\ldots%]_{\text{Hom}} .$$
Using the leading terms (of lowest degree in $\frac{1}{(1-t_1)}$), we obtain the
following result.

\begin{Lemma}\label{basis}
The set of genus 0 correlators
 $$
 \Big\{ \, \lann 1,\ldots,1\rann_{0,n}\, |_{t_0=0} \, \Big\}_{n\geq  4} $$ 
freely generate the ring
 $\CC(t_1)[t_2,t_3,...]$ over $\CC(t_1)$.
\end{Lemma}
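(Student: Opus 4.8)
The plan is to exhibit a unitriangular relationship between the correlators $G_n := \lann 1,\ldots,1\rann_{0,n}\,|_{t_0=0}$ and the free generators $t_2,t_3,\ldots$, and then invert it. First I would record that $G_n$ is homogeneous of degree $n-3$: in the expansion $G_n=\sum_{k\geq 0}\frac{1}{k!}\int_{\overline{M}_{0,n+k}}\prod_{i=1}^k T(\psi_{n+i})$ with $t_0=0$, a choice of $t_{a_i}\psi^{a_i}$ (with $a_i\geq 1$) from each of the $k$ factors yields a nonzero integral only when $\sum_i a_i=n+k-3=\dim\overline{M}_{0,n+k}$, and then the monomial $\prod_i t_{a_i}$ has degree $\sum_i(a_i-1)=n-3$. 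Since $t_{n-2}$ is the unique variable of degree $n-3$, every degree-$(n-3)$ monomial other than $t_{n-2}$ itself lies in $\CC(t_1)[t_2,\ldots,t_{n-3}]$. Thus it suffices to prove
$$G_n = c_n\, t_{n-2} + (\text{an element of }\CC(t_1)[t_2,\ldots,t_{n-3}]), \qquad c_n\in\CC(t_1)^\times.$$

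Next I would compute $c_n$, the coefficient of $t_{n-2}$ over $\CC(t_1)$. This collects exactly the terms in which one insertion marking carries $\psi^{n-2}$ (the factor of $t_{n-2}$) and the remaining $k-1$ insertion markings carry $\psi^{1}$ (factors of $t_1$); such a configuration has total $\psi$-degree $(n-2)+(k-1)=n+k-3$, hence contributes for every $k\geq 1$. Using the genus-$0$ evaluation $\int_{\overline{M}_{0,m}}\psi_1^{a_1}\cdots\psi_m^{a_m}=\frac{(m-3)!}{a_1!\cdots a_m!}$ with $m=n+k$, the $\binom{k}{1}$ choices of the distinguished marking, and the $\frac{1}{k!}$ prefactor, a short resummation gives
$$\sum_{k\geq 1}\frac{1}{k!}\,k\,t_1^{k-1}\,\frac{(n+k-3)!}{(n-2)!}=\sum_{j\geq 0}\binom{n+j-2}{j}t_1^{j}=\frac{1}{(1-t_1)^{n-1}}\,,$$
which is a unit in $\CC(t_1)$. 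The same bookkeeping for an arbitrary degree-$(n-3)$ monomial $\prod_{i\geq 2}t_i^{a_i}$ produces the coefficient $\big(\prod_{i\geq 2}a_i!\,(i!)^{a_i}\big)^{-1}(n+A-3)!\,(1-t_1)^{-(n+A-2)}$ with $A=\sum_{i\geq 2}a_i\geq 1$; since $A=1$ forces the monomial $t_{n-2}$, this is precisely the statement that $\frac{1}{(1-t_1)^{n-1}}t_{n-2}$ is the part of $G_n$ of lowest order in $\frac{1}{1-t_1}$, as flagged before the Lemma.

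Finally I would invert the triangular system by induction on $n$. For $n=4$ one has $G_4=(1-t_1)^{-3}t_2$, so $t_2\in\CC(t_1)[G_4]$; assuming $t_2,\ldots,t_{n-3}\in\CC(t_1)[G_4,\ldots,G_{n-1}]$, the displayed relation lets me solve $t_{n-2}=c_n^{-1}\big(G_n-(\text{element of }\CC(t_1)[t_2,\ldots,t_{n-3}])\big)\in\CC(t_1)[G_4,\ldots,G_n]$. Hence $\CC(t_1)[G_4,\ldots,G_n]=\CC(t_1)[t_2,\ldots,t_{n-2}]$ for every $n$, and taking the union over $n$ shows the $G_n$ generate $\CC(t_1)[t_2,t_3,\ldots]$. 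Freeness follows from the same equality: $\CC(t_1)[t_2,\ldots,t_{n-2}]$ is a polynomial ring on $n-3$ algebraically independent generators, so the $n-3$ elements $G_4,\ldots,G_n$ generating it are themselves algebraically independent over $\CC(t_1)$.

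The only genuine content lies in the second step; everything else is the formal inversion of a unitriangular change of variables. I therefore expect the main (still routine) obstacle to be the resummation establishing that $c_n=(1-t_1)^{-(n-1)}$ is a unit, where the specific genus-$0$ $\psi$-integrals and the $\frac{1}{k!}$ weighting must conspire. The string equation of Lemma \ref{stst} can serve as an alternative route to, or a consistency check on, this leading-term computation.
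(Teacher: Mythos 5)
Your proposal is correct and is exactly the argument the paper intends: the paper's one-line proof ("using the leading terms of lowest degree in $\frac{1}{1-t_1}$") is precisely your observation that $\lann 1,\ldots,1\rann_{0,n}|_{t_0=0}$ is homogeneous of degree $n-3$ with $t_{n-2}$-coefficient $(1-t_1)^{-(n-1)}$ (the unique lowest-order term in $\frac{1}{1-t_1}$), followed by the unitriangular inversion. Your computation of the general monomial coefficient $\frac{(n+A-3)!}{\prod a_i!(i!)^{a_i}}(1-t_1)^{-(n+A-2)}$ checks out and confirms the leading-term claim.
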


%By  Lemma \ref{basis}, we can find a unique representation of $\lann \psi^{a_1},\ldots,\psi^{a_n}\rann_{g,n}|_{t_0=0}$
%in the  variables
%\begin{equation}\label{k3k3}
%\Big\{\, \lann 1,\ldots,1\rann_{0,n}|_{t_0=0}\, \Big\}_{n\geq 3}\, .
%\end{equation}
%The $n=3$ correlator is included in the set \eqref{k3k3} to
%capture the variable $t_1$.
%For example, in $g=1$,
%\begin{eqnarray*}
%    \lann 1,1\rann_{1,2}|_{t_0=0}&=&\frac{1}{24}
%    \left(\frac{\lann 1,1,1,1,1\rann_{0,5}|_{t_0=0}}{\lan 1,1,1\rann_{0,3}|_{t_0=0}}-\frac{\lann 1,1,1,1\rann^2_{0,4}|_{t_0=0}}{\lann 1,1,1\rann^2_{0,3}|_{t_0=0}}\right)\, ,\\
%    \lann 1\rann_{1,1}|_{t_0=0}&=&\frac{1}{24}\frac{\lann 1,1,1,1\rann_{0,4}|_{t_0=0}}{\lann 1,1,1\rann_{0,3}|_{t_0=0}}
%    \end{eqnarray*}
%A more complicated example in $g=2$ is    
%\begin{eqnarray*}
%\lann \ \rann_{2,0}|_{t_0=0}&=& \ \ \frac{1}{1152}\frac{\lann 1,1,1,1,1,1\rann_{0,6}|_{t_0=0}}{\lann 1,1,1\rann_{0,3}|_{t_0=0}^2}\\
%    & & -\frac{7}{1920}\frac{\lann 1,1,1,1,1\rann_{0,5}|_{t_0=0}\lann 1,1,1,1\rann_{0,4}|_{t_0=0}}{\lann 1,1,1\rann_{0,3}|_{t_0=0}^3}\\& &+\frac{1}{360}\frac{\lann 1,1,1,1\rann_{0,4}|_{t_0=0}^3}{\lann 1,1,1\rann_{0,3}
%    |_{t_0=0}^4}\, .
%\end{eqnarray*}

\begin{Def} 
For $\gamma \in H^*(\overline{M}_{g,k})$, let $$\pP^{a_1,\ldots,a_n,\gamma}_{g,n}(s_0,s_1,s_2,...)\in \QQ(s_0, s_1,..)$$ be 
the unique rational function satisfying the condition
$$\lann \psi^{a_1},\ldots,\psi^{a_n}\, |\, \gamma\, \rann_{g,n}|_{t_0=0}
=\pP^{a_1,a_2,...,a_n,\gamma}_{g,n}|_{s_i=\lann 1,\ldots,1\rann_{0,i+3}|_{t_0=0}}\, . $$
\end{Def}
 
By applying Lemma \ref{stst}, we obtain the two following
results, see \cite[Section 4.1]{LP1}.
 
\begin{Prop}\label{GR1} For $2g-2+n>0$,
we have
 $$\lann 1,\ldots,1\,|\, \gamma\, \rann_{g,n}
=\pP^{0,\ldots,0,\gamma}_{g,n}|_{s_i=\lann 1,\ldots,1\rann_{0,i+3}}\, . $$
\end{Prop}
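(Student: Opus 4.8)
The plan is to prove the identity by showing that its two sides are both killed by the string operator $\mathds{L}$ and already agree after the specialization $t_0=0$, and then by arguing that these two properties determine a series uniquely. This is the standard Givental-style reconstruction: the whole role of $\mathds{L}$ is to encode how the $t_0$-dependence is generated from the $t_0=0$ slice via the string equation.

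First I would check the two annihilation statements. For the left-hand side, $\mathds{L}\lann 1,\ldots,1\,|\,\gamma\,\rann_{g,n}=0$ is exactly the first assertion of Lemma \ref{stst}. For the right-hand side, the point is that $\mathds{L}$ is a derivation, so the chain rule gives
$$\mathds{L}\left(\pP^{0,\ldots,0,\gamma}_{g,n}\big|_{s_i=\lann 1,\ldots,1\rann_{0,i+3}}\right)=\sum_i \frac{\partial \pP^{0,\ldots,0,\gamma}_{g,n}}{\partial s_i}\cdot \mathds{L}\lann 1,\ldots,1\rann_{0,i+3}.$$
Each genus-$0$ correlator $\lann 1,\ldots,1\rann_{0,i+3}$ is itself annihilated by $\mathds{L}$ by Lemma \ref{stst}, which applies since $2\cdot 0-2+(i+3)>0$ for every $i\ge 0$; hence the right-hand side is annihilated as well. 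Agreement at $t_0=0$ is then immediate: it is precisely the defining property of $\pP^{0,\ldots,0,\gamma}_{g,n}$, because restricting the right-hand side to $t_0=0$ commutes with the substitution $s_i=\lann 1,\ldots,1\rann_{0,i+3}$.

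The decisive step is to deduce equality from these two facts. Writing $\mathds{L}=(1-t_1)\frac{\partial}{\partial t_0}-\sum_{i\geq 2}t_i\frac{\partial}{\partial t_{i-1}}$, the coefficient $1-t_1$ of $\partial/\partial t_0$ is invertible in the ground ring $\CC[\frac{1}{1-t_1}][t_2,t_3,\ldots]$. Applying $\partial_{t_0}^m$ to $\mathds{L} F=0$ and restricting to $t_0=0$ (using that the operators $\partial_{t_{i-1}}$ with $i\ge 2$ commute with $\partial_{t_0}$ and differentiate only in the variables $t_{\ge 1}$) yields the recursion
$$\partial_{t_0}^{m+1}F\big|_{t_0=0}=\frac{1}{1-t_1}\sum_{i\geq 2}t_i\,\partial_{t_{i-1}}\!\left(\partial_{t_0}^m F\big|_{t_0=0}\right).$$
Thus every $t_0$-Taylor coefficient of a solution $F$ of $\mathds{L} F=0$ is determined by $F|_{t_0=0}$. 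Applying this to $F$ equal to the difference of the two sides, which is annihilated by $\mathds{L}$ and vanishes at $t_0=0$, forces that difference to be zero, which is the claim.

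The hard part will be the last step: making precise the ring in which this determination argument takes place, i.e. treating both sides as elements of the ring of power series in $t_0$ over $\CC[\frac{1}{1-t_1}][t_2,t_3,\ldots]$ so that the recursion is valid and the leading coefficient $1-t_1$ is genuinely invertible. Once that formal framework is fixed, the inputs from Lemma \ref{stst} and from the definition of $\pP^{\bullet}_{g,n}$ are routine, and the uniqueness of the lift (Lemma \ref{basis}) is not even needed for this direction.
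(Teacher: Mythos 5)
Your proposal is correct and follows essentially the same route as the paper, which simply invokes Lemma \ref{stst} and defers to \cite[Section 4.1]{LP1} for this standard Givental-style reconstruction: both sides are annihilated by $\mathds{L}$, agree at $t_0=0$ by the definition of $\pP^{0,\ldots,0,\gamma}_{g,n}$, and a solution of $\mathds{L}F=0$ is determined by its restriction to $t_0=0$ since the coefficient $1-t_1$ of $\partial/\partial t_0$ is invertible. The only point you gloss over is that the substitution $s_i=\lann 1,\ldots,1\rann_{0,i+3}$ into the rational function $\pP$ must be well defined (denominators invertible in the relevant formal power series ring), which is routine.
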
 

%\begin{proof}
% Both sides of the equation satisfy the differential equation
% \begin{align*}
%     \mathds{L}=0.
% \end{align*}
% By definition, both sides have the same initial conditions at $t_0=0$.
%\end{proof}

\begin{Prop}\label{GR2} For $2g-2+n>0$,
 \begin{multline*}
     \lannn \frac{1}{x_1-\psi_1}, \ldots, \frac{1}{x_n-\psi_n}\, \Big| \, \gamma \, \rannn_{g,n}= \\
     e^{\lann 1,1\rann_{0,2}(\sum_i\frac{1}{x_i})}\sum_{a_1,\ldots,a_n}\frac{\pP^{a_1,\ldots,a_n,\gamma}_{g,n}|_{s_i=\lann 1,\ldots,1\rann_{0,i+3}}
     }{x_1^{a_1+1} \cdots x_n^{a_n+1}}.
 \end{multline*}
\end{Prop}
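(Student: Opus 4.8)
The plan is to characterize both sides as the \emph{unique} eigenfunction of the operator $\mathds{L}$ with eigenvalue $\frac{1}{x_1}+\ldots+\frac{1}{x_n}$ having a prescribed restriction to $\{t_0=0\}$, and then to match those restrictions. First I would expand the left side by the geometric series definition of the brackets, so that
$$\lannn \frac{1}{x_1-\psi_1},\ldots,\frac{1}{x_n-\psi_n}\, \Big|\, \gamma\, \rannn_{g,n}=\sum_{a_1,\ldots,a_n}\frac{\lann \psi^{a_1},\ldots,\psi^{a_n}\,|\,\gamma\rann_{g,n}}{x_1^{a_1+1}\cdots x_n^{a_n+1}}\, .$$
The claim then compares the stable-range correlators $\lann \psi^{a_1},\ldots,\psi^{a_n}\,|\,\gamma\rann_{g,n}$, viewed as functions of all the $t_i$, with the polynomials $\pP^{a_1,\ldots,a_n,\gamma}_{g,n}$ evaluated at $s_i=\lann 1,\ldots,1\rann_{0,i+3}$, up to the factor $e^{\lann 1,1\rann_{0,2}\sum_i 1/x_i}$; by the Definition of $\pP$ the two sides agree exactly after setting $t_0=0$, which supplies the initial condition.

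The two computational inputs I would isolate are $\lann 1,1\rann_{0,2}|_{t_0=0}=0$ and $\mathds{L}\lann 1,1\rann_{0,2}=1$. The vanishing is a dimension count: on $\overline{M}_{0,2+k}$ of dimension $k-1$ each factor $T(\psi)|_{t_0=0}$ has minimal degree $1$, so the $k$ factors force minimal degree $k>k-1$ and the integral vanishes. The identity $\mathds{L}\lann 1,1\rann_{0,2}=1$ is the string equation applied at the unstable $(0,2)$ boundary: the $\partial/\partial t_0$ term adds a marking carrying $\psi^0$, and all resulting contributions cancel against the $-\sum_i t_i\partial/\partial t_{i-1}$ terms except for the single leftover coming from $\int_{\overline{M}_{0,3}}1=1$, the place where the string equation would otherwise push onto the unstable $\overline{M}_{0,2}$.

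With these in hand I would verify the eigenvalue equation on both sides. The left side satisfies $\mathds{L}(\,\cdot\,)=(\sum_i 1/x_i)(\,\cdot\,)$ by the second part of Lemma \ref{stst}. For the right side, set $\Phi=\sum_{a_1,\ldots,a_n}\frac{\pP^{a_1,\ldots,a_n,\gamma}_{g,n}|_{s_i=\lann 1,\ldots,1\rann_{0,i+3}}}{x_1^{a_1+1}\cdots x_n^{a_n+1}}$; since $\mathds{L}$ is a derivation and annihilates each $s_i=\lann 1,\ldots,1\rann_{0,i+3}$ by the first part of Lemma \ref{stst}, we get $\mathds{L}\Phi=0$, and then $\mathds{L}\lann 1,1\rann_{0,2}=1$ together with the product rule gives $\mathds{L}\big(e^{\lann 1,1\rann_{0,2}\sum_i 1/x_i}\Phi\big)=\big(\sum_i 1/x_i\big)e^{\lann 1,1\rann_{0,2}\sum_i 1/x_i}\Phi$. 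Thus both sides are $\mathds{L}$-eigenfunctions with the same eigenvalue, and at $t_0=0$ the exponential prefactor becomes $1$, so both restrict to $\sum_{a_1,\ldots,a_n}\frac{\pP^{a_1,\ldots,a_n,\gamma}_{g,n}|_{s_i=\lann 1,\ldots,1\rann_{0,i+3}|_{t_0=0}}}{x_1^{a_1+1}\cdots x_n^{a_n+1}}$, which is $\lannn \ldots \rannn_{g,n}|_{t_0=0}$ by the Definition of $\pP$.

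Finally I would close with a uniqueness argument. Writing any solution of $\mathds{L}F=(\sum_i 1/x_i)F$ as a series $F=\sum_{m\geq 0}F_m\, t_0^m$ with coefficients $F_m\in\CC(t_1)[t_2,t_3,\ldots]$, and using $\mathds{L}=(1-t_1)\frac{\partial}{\partial t_0}-\sum_{j\geq 1}t_{j+1}\frac{\partial}{\partial t_j}$, the equation becomes the recursion $(1-t_1)(m+1)F_{m+1}=\big(\sum_i 1/x_i\big)F_m+\sum_{j\geq 1}t_{j+1}\frac{\partial}{\partial t_j}F_m$, which determines $F_{m+1}$ from $F_m$ because $1-t_1$ is invertible. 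Hence the eigenfunction is determined by $F_0=F|_{t_0=0}$, and the two sides, agreeing at $t_0=0$, coincide. I expect the main obstacle to be the clean bookkeeping of the unstable $(0,2)$ correlator: getting $\mathds{L}\lann 1,1\rann_{0,2}=1$ exactly right, together with its $t_0=0$ vanishing, is where the boundary subtlety of the string equation must be handled, and it is precisely this constant that produces the exponential factor distinguishing Proposition \ref{GR2} from Proposition \ref{GR1}.
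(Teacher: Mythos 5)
Your argument is correct and is essentially the paper's own proof (which the paper compresses to ``apply Lemma \ref{stst}'' with details deferred to \cite[Section 4.1]{LP1}): both sides are shown to satisfy $\mathds{L}F=(\sum_i 1/x_i)F$ using Lemma \ref{stst} together with $\mathds{L}\lann 1,1\rann_{0,2}=1$ and $\lann 1,1\rann_{0,2}|_{t_0=0}=0$, and then the solution is pinned down by its restriction to $t_0=0$, which matches by the definition of $\pP^{a_1,\ldots,a_n,\gamma}_{g,n}$. Your explicit recursion in powers of $t_0$ making the uniqueness step precise, and your verification of the two $(0,2)$ identities, are exactly the intended bookkeeping.
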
 
 
 %\begin{proof}
 % Both sides of the equation satisfy differential equation
 % \begin{align*}
 %     \mathds{L}-\sum_i\frac{1}{x_i}=0.
 % \end{align*}
 %Both sides have the same initial conditions at $t_0=0$.
 %We use here
 %since
 %\begin{align*}
 %    <\frac{1}{x_1-\psi}, \frac{1}{x_2-\psi}, ..., %\frac{1}{x_n-\psi}>_{g,n}|_{t_0=0}=\frac{<\psi^{a_1},\psi^{a_2},...,%\psi^{a_n}>_{g,n}|_{t_0=0}}{x_1^{a_1} x_2^{a_2} .... x_n^{a_n}},
 %\end{align*} 
 %and
 %begin{align*}
     $$\mathds{L} \lann 1,1\rann_{0,2} =1\,, \ \ \ \  \lann 1,1\rann_{0,2}|_{t_0=0}=0\, .$$
 %\end{align*}
% There is no conflict here with Lemma
 %\ref{stst} since $(g,n)=(0,2)$ is not
 %in the stable range.
 %\end{proof}

%\subsection{The unstable case $(0,2)$}
The definition given in \eqref{derf}
of the correlator is valid
in the stable range $$2g-2+n>0\, .$$
The unstable case $(g,n)=(0,2)$ plays a
special role. We define
$$\lannn \frac{1}{x_1-\psi_1}, \frac{1}{x_2-\psi_2}\rannn_{0,2}$$
by 
adding the
degenerate term
$$\frac{1}{x_1+x_2}$$
to the terms obtained
by the 
 expansion of $\frac{1}{x_i-\psi_i}$ as 
 a geometric series.
 The degenerate term is associated
to the (unstable) moduli space
of genus 0 with 2 markings.
By \cite[Section 4.2]{LP1}, we have.

\begin{Prop}\label{GR22} We have
 \begin{equation*}
     \lannn \frac{1}{x_1-\psi_1}, \frac{1}{x_2-\psi_2} \rannn_{0,2}= 
     e^{\lann 1,1\rann_{0,2}\left(\frac{1}{x_1}+
     \frac{1}{x_2}\right)}\left(\frac{1}{x_1+x_2}\right)\, .
 \end{equation*}
\end{Prop}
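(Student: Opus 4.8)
The plan is to characterize
\[
F(x_1,x_2):=\lannn \tfrac{1}{x_1-\psi_1},\tfrac{1}{x_2-\psi_2}\rannn_{0,2}
\]
by a first-order equation in the string operator $\mathds{L}$ together with its value at $t_0=0$, paralleling the proof of Proposition \ref{GR2} but carefully accounting for the unstable boundary. I would write $F=\frac{1}{x_1+x_2}+F_{\mathrm{st}}$, where $F_{\mathrm{st}}=\sum_{a_1,a_2\ge 0}x_1^{-a_1-1}x_2^{-a_2-1}\lann \psi^{a_1},\psi^{a_2}\rann_{0,2}$ collects the stable integrals over $\overline{M}_{0,2+k}$ with $k\ge 1$. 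The central first step is the unstable string equation $\mathds{L}\,F=(\tfrac{1}{x_1}+\tfrac{1}{x_2})F$. Since the degenerate summand $\frac{1}{x_1+x_2}$ is independent of the $t_i$ it is killed by $\mathds{L}$, so this is equivalent to $\mathds{L}F_{\mathrm{st}}=(\tfrac{1}{x_1}+\tfrac{1}{x_2})F_{\mathrm{st}}+\tfrac{1}{x_1x_2}$, the extra term being exactly $(\tfrac{1}{x_1}+\tfrac{1}{x_2})\cdot\frac{1}{x_1+x_2}$.

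To establish this I would trace $\mathds{L}$ term by term in $k$. For $k\ge 2$ the forgetful map $\overline{M}_{0,2+k}\to\overline{M}_{0,1+k}$ has a stable target, so the geometric string equation applies exactly as in Lemma \ref{stst}: the $-\sum_i t_i\partial_{t_{i-1}}$ part of $\mathds{L}$ cancels the corrections coming from the $T(\psi)$-markings, leaving only the lowering of $\psi_1$ and $\psi_2$, which reassembles in the generating series into the factor $\tfrac{1}{x_1}+\tfrac{1}{x_2}$ multiplying $F_{\mathrm{st}}$. The one place the mechanism breaks is the $k=1$ term: here $\partial_{t_0}$ produces a genuine string marking on $\overline{M}_{0,3}$ whose forgetful map would land on the nonexistent $\overline{M}_{0,2}$, so the pushforward cannot be performed and the contribution survives as the anomalous constant $\int_{\overline{M}_{0,3}}\psi_1^{a_1}\psi_2^{a_2}=\delta_{a_1,0}\delta_{a_2,0}$, which sums to precisely $\tfrac{1}{x_1x_2}$. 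Checking that this unstable boundary term is exactly $\frac{1}{x_1x_2}$ — equivalently, that the degenerate summand $\frac{1}{x_1+x_2}$ is exactly the correction needed to restore the string equation — is the main obstacle and the heart of the argument.

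Granting the string equation, I would set $G=e^{-\lann1,1\rann_{0,2}(1/x_1+1/x_2)}F$. Using $\mathds{L}\lann 1,1\rann_{0,2}=1$ and the product rule, the exponential prefactor absorbs the $(\tfrac1{x_1}+\tfrac1{x_2})F$ term, giving $\mathds{L}G=0$. For the initial value, a dimension count shows $F_{\mathrm{st}}|_{t_0=0}=0$: at $t_0=0$ each factor $T(\psi)$ has $\psi$-degree $\ge 1$, so $\psi_1^{a_1}\psi_2^{a_2}\prod_{i=1}^k T(\psi_{2+i})$ has degree $\ge a_1+a_2+k>k-1=\dim\overline{M}_{0,2+k}$ and every integral vanishes. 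Since $\lann1,1\rann_{0,2}|_{t_0=0}=0$ as well, $G|_{t_0=0}=F|_{t_0=0}=\frac{1}{x_1+x_2}$. Finally, $\mathds{L}=(1-t_1)\partial_{t_0}-t_2\partial_{t_1}-\cdots$ has invertible $\partial_{t_0}$-coefficient $1-t_1$, hence is transverse to the slice $\{t_0=0\}$, so a solution of $\mathds{L}G=0$ is uniquely determined by its restriction there; as the constant $\frac{1}{x_1+x_2}$ is such a solution with the same restriction, $G=\frac{1}{x_1+x_2}$ identically. Undoing the substitution gives $F=e^{\lann1,1\rann_{0,2}(1/x_1+1/x_2)}\frac{1}{x_1+x_2}$, which is the assertion.
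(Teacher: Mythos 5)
Your argument is correct and is essentially the intended one: the paper gives no in-line proof of Proposition \ref{GR22} (it simply cites \cite[Section 4.2]{LP1}), and your route --- the string equation $\mathds{L}F=\left(\tfrac{1}{x_1}+\tfrac{1}{x_2}\right)F$, with the degenerate summand $\tfrac{1}{x_1+x_2}$ absorbing the anomalous $\overline{M}_{0,3}$ contribution $\tfrac{1}{x_1x_2}$, followed by the $t_0=0$ initial condition and the uniqueness of solutions of $\mathds{L}G=0$ --- is exactly the mechanism of Lemma \ref{stst} and Proposition \ref{GR2} extended to the unstable case, which is what the cited reference does. Your two key checks, the dimension count giving $F_{\mathrm{st}}|_{t_0=0}=0$ and the identity $\left(\tfrac{1}{x_1}+\tfrac{1}{x_2}\right)\tfrac{1}{x_1+x_2}=\tfrac{1}{x_1x_2}$ showing the degenerate term is precisely the correction that restores the string equation, are both carried out correctly.
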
 
 
% \begin{proof}
%  Both sides of the equation satisfy differential equation
%  \begin{align*}
%      \mathds{L}-\sum_{i=1}^2\frac{1}{x_i}=0.
%  \end{align*}
% Both sides have the same initial conditions at $t_0=0$.
  %since
 %\begin{align*}
 %    <\frac{1}{x_1-\psi}, \frac{1}{x_2-\psi}, ..., %\frac{1}{x_n-\psi}>_{g,n}|_{t_0=0}=\frac{<\psi^{a_1},\psi^{a_2},...,%\psi^{a_n}>_{g,n}|_{t_0=0}}{x_1^{a_1} x_2^{a_2} .... x_n^{a_n}},
 %\end{align*} 
 %and
 %begin{align*
 %\end{proof}

\subsection{Local invariants and wall crossing} 
%Let $(g,n)$ satisfy the Deligne-Mumford stability condition 
%$$2g-2+n>0\, .$$
The torus $\T$ acts on the moduli spaces
$\overline{M}_{g,n}(\PP^4,d)$  and
$\overline{Q}_{g,n}(\PP^4,d)$.
We consider here special localization contributions 
associated to the fixed points ${p}_i\in \PP^4$.

%Let 
%$$f: (C,p_1,\ldots,p_n) \rightarrow \PP^2$$
%be a genus $g$, $n$-pointed, degree $d$  stable map
%%to $\PP^2$ which is $\T$-fixed.
%Let 
%$$\pi: (C,p_1,\ldots,p_n) \rightarrow
%(C^{st},p_1,\ldots,p_n)$$
%be the Deligne-Mumford stabilization of the domain curve.

Consider first the moduli of stable maps.
Let
$$\overline{M}_{g,n}(\PP^4,d)^{\T,p_i}
\subset \overline{M}_{g,n}(\PP^4,d) $$
be the union of
 $\T$-fixed loci which parameterize stable maps
obtained by attaching $\T$-fixed rational tails to a genus $g$, $n$-pointed
Deligne-Mumford stable curve contracted
to the point $p_i\in\PP^4$.
Similarly, let 
$$\overline{Q}_{g,n}(\PP^4,d)^{\T,p_i}\subset
\overline{Q}_{g,n}(\PP^4,d)
$$
be the parallel $\T$-fixed locus
parameterizing stable quotients obtained
by attaching base points
to  a genus $g$, $n$-pointed
Deligne-Mumford stable curve contracted
to the point $p_i\in\PP^4$.

Let $\Lambda_i$ denote the localization of the ring
$$\CC[\lambda^{\pm 1}_0,\dots,\lambda^{\pm 1}_4]$$ at 
the five tangent weights at $p_i\in \PP^4$.
Using the virtual
localization formula \cite{GP}, 
there exist unique series
$$S_{p_i}\in\Lambda_i[\psi][[Q]]$$ 
for which the localization contribution 
of the $\T$-fixed locus
$\overline{M}_{g,n}(\PP^4,d)^{\T,p_i}$
to the equivariant Gromov-Witten
invariants of formal quintic
can be written as
\begin{multline*}
    \sum_{d=0}^\infty Q^d \int_{[\overline{M}_{g,n}(\PP^4,d)^{\T,p_i}]^{\vir}}\frac{e(\text{Obs})}{e(\text{Nor})}
    \psi_1^{a_1}\cdots\psi_n^{a_n}=\\
    \sum_{k=0}^\infty \frac{1}{k!} \int_{\overline{M}_{g,n+k}}
    {\mathsf{H}}_{g}^{p_i}\, \psi_1^{a_1}\cdots\psi_n^{a_n}\, \prod_{j=1}^k S_{p_i}(\psi_{n+j})\, .
\end{multline*}
Here, $\mathsf{H}_{g}^{p_i}$ is the standard vertex class, 
\begin{equation}\label{hhbb}
\frac{e(\mathbb{E}_g^*\otimes T_{p_i}(\PP^4)))}{e(T_{p_i}(\PP^4)} \cdot \frac{(5\lambda_i)}{e(\mathbb{E}_g^* \otimes(5\lambda_i))}\, ,
\end{equation}
obtained from the  Hodge bundle $\mathbb{E}_g\rightarrow \overline{M}_{g,n+k}$.

Similarly, the application of the
virtual localization formula to the moduli of stable
quotients yields classes
$$F_{p_i,k}\in H^*(\overline{M}_{g,n|k})\otimes_\CC\Lambda_i$$ 
for which the contribution of $\overline{Q}_{g,n}(\PP^4,d)^{T,p_i}$ is given by
\begin{multline*}
    \sum_{d=0}^\infty q^d \int_{[\overline{Q}_{g,n}(\PP^4,d)^{\T,p_i}]^{\vir}}\frac{e(\text{Obs})}{e(\text{Nor})}\psi_1^{a_1}\cdots
    \psi_n^{a_n}=\\
    \sum_{k=0}^\infty \frac{q^k}{k!} \int_{\overline{M}_{g,n|k}} \mathsf{H}_{g}^{p_i}\, \psi_1^{a_1}\cdots \psi_n^{a_n}\, F_{p_i,k}.
\end{multline*}
Here $\overline{M}_{g,n|k}$ is the moduli space of genus $g$ curves with markings
$$\{p_1,\cdots,p_n\}\cup\{\hat{p}_1\cdots\hat{p}_k \}\in C^{\text{ns}}\subset C$$
satisfying the conditions
\begin{itemize}
 \item[(i)] the points $p_i$ are distinct,
 \item[(ii)] the points $\hat{p}_j$ are distinct from the points $p_i$,
\end{itemize}
with stability given by the ampleness of 
$$\omega_C(\sum_{i=1}^m p_i+\epsilon\sum_{j=1}^k \hat{p}_j)$$
for every strictly positive $\epsilon \in \QQ$.

The Hodge class $\mathsf{H}_{g}^{p_i}$ is given again by
formula \eqref{hhbb} using the Hodge bundle $$\mathbb{E}_g\rightarrow \overline{M}_{g,n|k}\, .$$

\begin{Def}
 For $\gamma\in H^*(\overline{M}_{g,n})$, let
 \begin{eqnarray*}
     \lann \psi_1^{a_1},\ldots,\psi_n^{a_n}\, |\, \gamma\, \rann_{g,n}^{p_i,\infty}
     &=&
     \sum_{k=0}^\infty \frac{1}{k!}
     \int_{\overline{M}_{g,n+k}} \gamma \, \psi_1^{a_1}\cdots \psi_n^{a_n}\prod_{j=1}^k S_{p_i}(\psi_{n+j})\, ,\\
    \lann \psi_1^{a_1},\ldots,\psi_n^{a_n}\, |\, \gamma\, \rann_{g,n}^{p_i,0+}&=&
    \sum_{k=0}^\infty \frac{q^k}{k!} \int_{\overline{M}_{g,n|k}} \gamma \, \psi_1^{a_1}\cdots \psi_n^{a_n}\, F_{p_i,k}\, .
 \end{eqnarray*}
\end{Def}

\
\begin{Prop} [Ciocan-Fontanine, Kim \cite{CKg}] \label{WC} 
For $2g-2+n>0$,
we have the wall crossing relation
$$\lann \psi_1^{a_1},\ldots,\psi_n^{a_n}\, |\, \gamma\, \rann_{g,n}^{p_i,\infty}(Q(q))= (I^{\mathsf{Q}}_0)^{2g-2+n}\lann \psi_1^{a_1},\ldots,\psi_n^{a_n}\, |\, \gamma\rann_{g,n}^{p_i,0+}(q)$$
 where 
 $Q(q)$ is the mirror map
 $$Q(q)=\exp\left(\frac{I_1^{\mathsf{Q}}(q)}{I_0^{\mathsf{Q}}(q)}\right)\, .$$
\end{Prop}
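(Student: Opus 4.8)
The plan is to reduce the wall-crossing identity, on both sides, to genus-$0$ data via the Givental-style formalism of Section \ref{intmg}, and then to invoke the genus-$0$ mirror transformation relating the stable-quotient and stable-map two-point functions. First I would recognize the $\infty$ correlator as an instance of the bracket $\lann\,\cdots\,|\,\gamma\,\rann_{g,n}$ of Section \ref{intmg}: the tail insertions $\prod_j S_{p_i}(\psi_{n+j})$ are exactly of the form $\prod_j T(\psi_{n+j})$ with the universal series $T$ specialized to the stable-map tail series $S_{p_i}$. On the $0+$ side the analogous role is played by the generating series $\sum_k \frac{q^k}{k!}F_{p_i,k}$ on the light-point spaces $\overline{M}_{g,n|k}$. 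In both cases the genus-$g$ vertex is the \emph{same} Hodge class $\mathsf{H}_g^{p_i}$ of \eqref{hhbb}; the two theories differ only through their tail decorations and through the choice of moduli ($\overline{M}_{g,n+k}$ versus $\overline{M}_{g,n|k}$).

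Next I would apply Lemma \ref{stst} together with Propositions \ref{GR1} and \ref{GR2} to express the $\infty$ correlator through the theory-independent rational functions $\pP^{a_1,\ldots,a_n,\gamma}_{g,n}$, evaluated at the genus-$0$ inputs $s_i = \lann 1,\ldots,1\rann_{0,i+3}$, and I would establish the corresponding light-point reduction for the $0+$ correlator. Because the functions $\pP$ do not depend on the stability condition, the entire comparison collapses to a comparison of the genus-$0$ two-point building blocks of the two theories, i.e. to the series $\mathds{S}$ of Section \ref{lightm} on the $0+$ side and its stable-map counterpart on the $\infty$ side.

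The genus-$0$ comparison is then governed by the Birkhoff factorization \eqref{S1}: the stable-map $J$-function and the stable-quotient $\mathds{S}$-matrix differ precisely by the mirror map $Q(q)=\exp(I_1^{\mathsf{Q}}/I_0^{\mathsf{Q}})$ and by the normalization $C_0 = \mathds{I}|_{z=\infty,t=0,H=1}$ of \eqref{y999}, which is the hypergeometric series $I_0^{\mathsf{Q}}$. Substituting $Q=Q(q)$ therefore turns each $\infty$ building block into the corresponding $0+$ block up to one factor of $I_0^{\mathsf{Q}}$. Assembling these factors through the genus-$0$ reduction, one finds that the number of accumulated $I_0^{\mathsf{Q}}$ factors is the Euler characteristic $2g-2+n$, which yields the stated prefactor $(I_0^{\mathsf{Q}})^{2g-2+n}$.

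I expect the main obstacle to be the careful comparison between the light-point moduli $\overline{M}_{g,n|k}$ and the ordinary-point moduli $\overline{M}_{g,n+k}$, together with the verification that the mirror map is exactly the change of variables intertwining the two. Tracking the $I_0^{\mathsf{Q}}$-normalization through this light-versus-heavy dictionary, and checking that it accumulates with the correct Euler-characteristic weight rather than some other power, is the technical heart of the argument; this is precisely the content established by Ciocan-Fontanine and Kim in \cite{CKg}, which I would cite for the delicate normalization bookkeeping.
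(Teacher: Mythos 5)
Your proposal ultimately rests the proof on the wall-crossing theory of Ciocan-Fontanine and Kim, and that is exactly what the paper does: its entire proof of Proposition \ref{WC} is the single remark that the statement is a consequence of \cite[Lemma 5.5.1]{CKg}, with the mirror map identified as the quintic mirror map of Section \ref{holp}. So in substance you and the paper take the same route, namely a citation. One caution about the surrounding sketch, should you try to flesh it out: the genus-$0$ reduction of the $0+$ correlators to the universal functions $\pP^{a_1,\ldots,a_n,\gamma}_{g,n}$ is obtained in the paper by combining Proposition \ref{GR1} \emph{with} Proposition \ref{WC} itself, so invoking that reduction as an input to prove Proposition \ref{WC} would be circular; one would instead need an independent light-point analogue of Proposition \ref{GR1} on $\overline{M}_{g,n|k}$, or simply the direct argument of \cite{CKg}.
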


Proposition \ref{WC} is a consequence
of \cite[Lemma 5.5.1]{CKg}. The mirror
map here is the mirror map for
quintic discussed in Section \ref{holp}.
 Propositions \ref{GR1} and \ref{WC} together yield 
 \begin{eqnarray*}
 \lann 1,\ldots,1\,  |\, \gamma \, \rann_{g,n}^{p_i,\infty}& =& \pP^{0,\ldots,0,\gamma}_{g,n}\big(\lann 1,1,1\rann_{0,3}^{p_i,\infty},\lann 1,1,1,1\rann _{0,4}^{p_i,\infty},\ldots\big)\, ,\\
 \lann 1,\ldots,1\,  |\, \gamma \, \rann_{g,n}^{p_i,0+}&=&\pP^{0,\ldots,0,\gamma}_{g,n}\big(\lann
 1,1,1\rann_{0,3}^{p_i,0+},\lann 1,1,1,1\rann _{0,4}^{p_i,0+},\ldots\big)\, .
 \end{eqnarray*}
Similarly, using Propositions \ref{GR2} and \ref{WC}, we obtain
\begin{multline*}
\lannn \frac{1}{x_1-\psi}, \ldots, \frac{1}{x_n-\psi}\, \Big| \, \gamma \, \rannn_{g,n}^{p_i,\infty}= \\
     e^{\lann 1,1\rann^{p_i,\infty}_{0,2}\left(\sum_i\frac{1}{x_i}\right)}\sum_{a_1,\ldots,a_n}\frac{\pP^{a_1,\ldots,a_n,\gamma}_{g,n}\big(\lann 1,1,1\rann_{0,3}^{p_i,\infty},\lann 1,1,1,1\rann_{0,4}^{p_i,\infty},\ldots \big)}{x_1^{a_1+1}\cdots x_n^{a_n+1}}\, ,
\end{multline*}
\begin{multline}\label{ppqqpp}
\lannn \frac{1}{x_1-\psi}, \ldots, \frac{1}{x_n-\psi}\, \Big| \, \gamma \, \rannn_{g,n}^{p_i,0+}= \\
     e^{\lann 1,1\rann^{p_i,0+}_{0,2}\left(\sum_i\frac{1}{x_i}\right)}\sum_{a_1,\ldots,a_n}\frac{\pP^{a_1,\ldots,a_n,\gamma}_{g,n}\big(\lann 1,1,1\rann_{0,3}^{p_i,0+},\lann 1,1,1,1\rann_{0,4}^{p_i,0+},\ldots \big)}{x_1^{a_1+1}\cdots x_n^{a_n+1}}\, .
\end{multline}

\section{Higher genus series on the formal quintic}\label{hgs}
\subsection{Overview}
We apply Givental's the localization strategy    \cite{Elliptic,SS,Book} for Gromov-Witten theory to the stable quotient invariants of formal quintic. 
The contribution $\text{Cont}_\Gamma(q)$ 
discussed in Section \ref{locq} 
of a graph $\Gamma \in \mathsf{G}_{g}(\PP^4)$ 
can be separated into vertex and edge contributions.
We express the vertex and edge contributions in terms of
the series $\mathds{S}_i$ and $\mathds{V}_{ij}$ of Section \ref{lightm}.
Our treatment here follows our study of
$K\proj^2$ in \cite[Section 5]{LP1}

\subsection{Edge terms}
Recall the definition{\footnote{We use
the variables $x_1$ and $x_2$ here instead
of $x$ and $y$.}}of $\mathds{V}_{ij}$
given in Section \ref{lightm},
\begin{equation}\label{dfdf6}
\mathds{V}_{ij}  =  
\Big\langle \Big\langle  \frac{\phi _i}{x- \psi } ,  \frac{\phi _j}{y - \psi } 
\Big\rangle \Big\rangle  _{0, 2}^{0+,0+}  \, .
\end{equation}
Let $\overline{\mathds{V}}_{ij}$ denote
the restriction of $\mathds{V}_{ij}$
to $t=0$.
Via formula \eqref{ddgg},
$\overline{\mathds{V}}_{ij}$ is a summation of contributions of fixed loci indexed by
a graph $\Gamma$ consisting of two vertices 
connected by a unique edge. 
Let $w_1$ and $w_2$ be 
$\T$-weights. Denote by $$\overline{{\mathds{V}}}_{ij}^{w_1,w_2}$$ the summation of contributions of $\T$-fixed loci with
tangent weights precisely $w_1$
and $w_2$ on the first rational components
which exit the vertex components over
$p_i$ and $p_j$.

The series $\overline{{\mathds{V}}}_{ij}^{w_1,w_2}$
includes {\em both} vertex and edge
contributions.
By definition \eqref{dfdf6} and the virtual localization formula, we find the
following relationship between
$\overline{\mathds{V}}_{ij}^{w_1,w_2}$
and the corresponding
pure edge contribution $\mathsf{E}_{ij}^{w_1,w_2}$,

\begin{eqnarray*}
    e_i\overline{\mathds{V}}_{ij}^{w_1,w_2}e_j
    &=& \lannn \frac{1}{w_1-\psi},\frac{1}{x_1-\psi}\rannn^{p_i,0+}_{0,2}\mathsf{E}_{ij}^{w_1,w_2}
    \lannn \frac{1}{w_2-\psi},\frac{1}{x_2-\psi}\rannn^{p_j,0+}_{0,2}\\
    &=&\frac{e^{\frac{\lann 1,1\rann^{p_i,0+}_{0,2}}{w_1}+\frac{\lann 1,1\rann^{p_i,0+}_{0,2}}{x_1}}}{w_1+x_1}
    \, \mathsf{E}^{w_1,w_2}_{ij}\, \frac{e^{\frac{\lann 1,1\rann^{p_j,0+}_{0,2}}{w_2}+\frac{\lann 1,1\rann^{p_j,0+}_{0,2}}{x_2}}}{w_2+x_2}
     \end{eqnarray*}
    
\begin{align*}        
    =\sum_{a_1,a_2}e^{\frac{\lann 1,1\rann^{p_i,0+}_{0,2}}{x_1}+\frac{\lann 1,1\rann^{p_i,0+}_{0,2}}{w_1}}e^{\frac{\lann 1,1\rann^{p_j,0+}_{0,2}}{x_2}+\frac{\lann 1,1\rann^{p_j,0+}_{0,2}}{w_2}}(-1)^{a_1+a_2} \frac{
    \mathsf{E}^{w_1,w_2}_{ij}}{w_1^{a_1}w_2^{a_2}}x_1^{a_1-1}x_2^{a_2-1}\, .
\end{align*}
After summing over all possible weights, we obtain
$$
    e_i\left(\overline{\mathds{V}}_{ij}-\frac{\delta_{ij}}{e_i(x_1+x_2)}\right)e_j=\sum_{w_1,w_2} e_i\overline{\mathds{V}}_{ij}^{w_1,w_2}e_j\, .$$
The above calculations immediately yield
the following result.
    
%    &=\sum_{w_1,w_2}<\frac{1}{w_1-\psi},\frac{1}{x_1-\psi}>E_{w_1,w_2}<\frac{1}{w_2-\psi},\frac{1}{x_2-\psi}>\\
%    &=\sum_{w_1,w_2}\frac{e^{\frac{<1,1>}{w_1}+\frac{<1,1>}{x_1}}}{w_1+x_1}E_{w_1,w_2}\frac{e^{\frac{<1,1>}{w_2}+\frac{<1,1>}{x_2}}}{w_2+x_2}\\
%    &=\sum_{w_1,w_2}\sum_{(a_1,a_2)}e^{\frac{<1,1>}{x_1}+\frac{<1,1>}{w_1}}e^{\frac{<1,1>}{x_2}+\frac{<1,1>}{w_2}}\frac{E_{w_1,w_2}}{w_1^{a_1}w_2^{a_2}}x_1^{a_1-1}y_1^{a_2-1}\\
%    &=\sum_{(a_1,a_2)}\sum_{w_1,w_2}e^{\frac{<1,1>}{x_1}+\frac{<1,1>}{w_1}}e^{\frac{<1,1>}{x_2}+\frac{<1,1>}{w_2}}\frac{E_{w_1,w_2}}{w_1^{a_1}w_2^{a_2}}x_1^{a_1-1}y_1^{a_2-1}.
%\end{align*}

\begin{Lemma}\label{Edge} We have
 \begin{multline*}
 \left[e^{-\frac{\lann1,1\rann^{p_i,0+}_{0,2}}{x_1}}
       e^{-\frac{\lann1,1\rann^{p_j,0+}_{0,2}}{x_2}}e_i\left(\overline{\mathds{V}}_{ij}-\frac{\delta_{ij}}{e_i(x_1+x_2)}\right)e_j\right]_{x_1^{a_1-1}x_2^{a_2-1}}=\\
       \sum_{w_1,w_2}
       e^{\frac{\lann1,1\rann^{p_i,0+}_{0,2}}{w_1}}e^{\frac{\lann1,1\rann^{p_j,0+}_{0,2}}{w_2}}(-1)^{a_1+a_2}\frac{\mathsf{E}_{ij}^{w_1,w_2}}{w_1^{a_1}w_2^{a_2}}\, .
 \end{multline*}
\end{Lemma}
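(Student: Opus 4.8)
The plan is to read the statement as a coefficient-extraction identity sitting directly on top of the factorization relation for $\overline{\mathds{V}}_{ij}^{w_1,w_2}$ recorded in the displayed computation immediately preceding the Lemma. The genuine geometric input — the splitting of each $\T$-fixed contribution to $\mathds{V}_{ij}$ into a genus $0$, two-pointed vertex correlator over $p_i$, a pure edge factor $\mathsf{E}_{ij}^{w_1,w_2}$, and a genus $0$, two-pointed vertex correlator over $p_j$, as supplied by the virtual localization formula of \cite{GP} applied to the definition \eqref{dfdf6} — has already been carried out. What remains is a purely algebraic manipulation of power series in the formal variables $x_1,x_2$.

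First I would invoke Proposition \ref{GR22} to evaluate the two unstable $(0,2)$-correlators dressing the edge factor, writing the one over $p_i$ as the exponential $e^{\lann 1,1\rann^{p_i,0+}_{0,2}(1/w_1+1/x_1)}$ times the degenerate denominator $1/(w_1+x_1)$, and similarly over $p_j$ with $w_2,x_2$; this is exactly the second displayed line. Next I would expand each denominator as a geometric series $1/(w_\ell+x_\ell)=\sum_{a_\ell\ge 1}(-1)^{a_\ell-1}x_\ell^{a_\ell-1}/w_\ell^{a_\ell}$ in $x_\ell$, producing the third displayed line together with its combined sign $(-1)^{a_1+a_2}$, and then sum over all admissible tangent weights $(w_1,w_2)$. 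Summing reassembles $e_i(\overline{\mathds{V}}_{ij}-\delta_{ij}/e_i(x+y))e_j$ on the left, the subtracted term removing precisely the unstable degree $0$ contribution that is attached to no edge.

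The decisive step is to multiply through by the inverse dressing factors $e^{-\lann 1,1\rann^{p_i,0+}_{0,2}/x_1}\,e^{-\lann 1,1\rann^{p_j,0+}_{0,2}/x_2}$. These cancel exactly the $x_1$- and $x_2$-dependent exponentials generated by Proposition \ref{GR22}, so that the resulting right-hand side is an honest power series in $x_1,x_2$ whose only $x$-dependence is the monomial $x_1^{a_1-1}x_2^{a_2-1}$, carrying the coefficient $\sum_{w_1,w_2}e^{\lann 1,1\rann^{p_i,0+}_{0,2}/w_1}\,e^{\lann 1,1\rann^{p_j,0+}_{0,2}/w_2}(-1)^{a_1+a_2}\mathsf{E}_{ij}^{w_1,w_2}/(w_1^{a_1}w_2^{a_2})$. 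Reading off the coefficient of $x_1^{a_1-1}x_2^{a_2-1}$ then yields the asserted identity.

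I do not expect a serious obstacle: the Lemma is the bookkeeping that isolates the pure edge data $\mathsf{E}_{ij}^{w_1,w_2}$ from the vertex dressing, and all the substantive content is already packaged in Proposition \ref{GR22} and the localization splitting. The only points demanding care are matching the sign in the geometric expansion (each factor contributes $(-1)^{a_\ell-1}$, so the product is the stated $(-1)^{a_1+a_2}$) and checking that the inserted prefactors cancel the $x$-dependent exponential dressing cleanly, leaving a series in $1/w_1,1/w_2$ whose weight-summed coefficients are exactly the edge contributions.
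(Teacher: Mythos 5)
Your proof is correct and follows essentially the same route as the paper: the localization splitting of $e_i\overline{\mathds{V}}_{ij}^{w_1,w_2}e_j$ into vertex--edge--vertex factors, evaluation of the two unstable $(0,2)$-correlators via Proposition \ref{GR22}, geometric expansion of $1/(w_\ell+x_\ell)$ giving the sign $(-1)^{a_1+a_2}$, summation over weights against the subtracted degenerate term, and cancellation of the $x$-dependent exponentials by the inserted prefactors. Your reading of the exponential dressing (both terms of the first factor carrying $\lann 1,1\rann^{p_i,0+}_{0,2}$) matches the Lemma and the final displayed line of the paper's computation.
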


\noindent The notation $[\ldots]_{x_1^{a_1-1}x_2^{a_2-1}}$
in Lemma \ref{Edge} denotes the coefficient of
 $x_1^{a_1-1}x_2^{a_2-1}$ in the series expansion
 of the argument.

\subsection{A simple graph}\label{simgr}
Before treating the general case, we present
the localization formula for a simple graph{\footnote{We
follow here the notation of Section \ref{locq}.}.
Let
$\Gamma\in \mathsf{G}_{g}(\PP^4)$ 
consist of two vertices   and one edge,
$$v_1,v_2\in \Gamma(V)\, , \ \ \ \ 
e\in \Gamma(E)\, $$
with genus and $\T$-fixed point assignments
$$\mathsf{g}(v_i)=g_i\, , \ \ \ \ \mathsf{p}(v_i)=p_i\, .$$

Let $w_1$ and $w_2$ be tangent
weights at the vertices $p_1$ and $p_2$
respectively. Denote by $\text{Cont}_{\Gamma,w_1,w_2}$
the summation of contributions to
\begin{equation}\label{zlzl}
\sum_{d>0} q^d\,( e(\text{Obs})\cap\left[\overline{Q}_{g}(\PP^4,d)\right]^{\vir})
\end{equation}
of $\T$-fixed loci with
tangent weights precisely $w_1$
and $w_2$ on the first rational components
which exit the vertex components over
$p_1$ and $p_2$.
We can express the localization formula for 
\eqref{zlzl} as
$$
\lannn \frac{1}{w_1-\psi}\, \Big|\, \mathsf{H}_{g_1}^{p_1}
\rannn_{g_1,1}^{p_1,0+}
\mathsf{E}^{w_1,w_2}_{12} \lannn\frac{1}{w_2-\psi}\, \Big|\, \mathsf{H}_{g_2}^{p_2}
\rannn_{g_2,1}^{p_2,0+} $$
which equals
$$\sum_{a_1,a_2} e^{\frac{\lann1,1\rann^{p_1,0+}_{0,2}}{w_1}}\frac{\ppl
{\psi^{a_1-1}} \, \Big|\, \mathsf{H}_{g_1}^{p_1}     \ppr_{g_1,1}^{p_1,0+}} {w_1^{a_1}} \mathsf{E}^{w_1,w_2}_{12} e^{\frac{\lann 1,1\rann^{p_2,0+}_{0,2}}{w_2}}\frac{\ppl {\psi^{a_2-1}} \, \Big|\, \mathsf{H}_{g_2}^{p_2}\ppr_{g_2,1}^{p_2,0+}}{w_2^{a_2}}
$$
%\begin{align*}
%    [\sum_d q^d \overline{Q}_{g}(K\PP^2,d)]^{\vir}_{\Gamma_{w_1,w_2}}&=H^{g_1,p_1}<\frac{1}{w_1-\psi}>E_{w_1,w_2}H^{g_2,p_2}<\frac{1}{w_2-\psi}>\\
%    &=\sum_{a_1,a_2} e^{\frac{<1,1>}{w_1}}H^{g_1,p_1}[\frac{\psi^{a_1-1}}{w_1^{a_1}}]E_{w_1,w_2}e^{\frac{<1,1>}{w_2}}H^{g_2,p_2}[\frac{\psi^{a_2-1}}{w_2^{a_2}}]
%\end{align*}
where $\mathsf{H}_{g_i}^{p_i}$ is
the Hodge class \eqref{hhbb}. We have used here
the notation
\begin{multline*}
\ppl
\psi^{k_1}_1, \ldots,\psi^{k_n}_n \, \Big|\, \mathsf{H}_{h}^{p_i}     \ppr_{h,n}^{p_i,0+} 
=\\
\pP^{k_1,\ldots,k_n,\mathsf{H}_{h}^{p_i}  }_{h,1}\big(\lann 1,1,1\rann_{0,3}^{p_i,0+},\lann 1,1,1,1\rann_{0,4}^{p_i,0+},\ldots \big)
\,
\end{multline*}
and applied \eqref{ppqqpp}.

% \begin{align*}
%     H^{g,p_i}:=\frac{\prod_{k \ne i} \prod_{j=0}^g (\lambda_i-\lambda_k-c_j)}{\prod_{k \ne i} (\lambda_i-\lambda_k)}\frac{-3\lambda_i}{\prod_{j=0}^g(-3\lambda_i-c_j)}.
% \end{align*}
After summing over all possible weights $w_1,w_2$ and
applying 
%\begin{align*}
%    [\sum_d q^d \overline{Q}_{g}(K\PP^2,d)]^{\vir}_{\Gamma}&=[\sum_d q^d \overline{Q}_{g}(K\PP^2,d)]^{\vir}_{\Gamma_{w_1,w_2}}\\
%    &=\sum_{w_1,w_2}H^{g_1,p_1}<\frac{1}{w_1-\psi}>E_{w_1,w_2}H^{g_2,p_2}<\frac{1}{w_2-\psi}>\\
%    &=\sum_{w_1,w_2}\sum_{a_1,a_2} e^{\frac{<1,1>}{w_1}}H^{g_1,p_1}[\frac{\psi^{a_1-1}}{w_1^{a_1}}]E_{w_1,w_2}e^{\frac{<1,1>}{w_2}}H^{g_2,p_2}[\frac{\psi^{a_2-1}}{w_2^{a_2}}]\\
%    &=\sum_{a_1,a_2}\sum_{w_1,w_2} H^{g_1,p_1}[\psi^{a_1-1}]H^{g_2,p_2}[\psi^{a_2-1}](e^{\frac{<1,1>}{w_1}}e^{\frac{<1,1>}{w_2}}\frac{E_{w_1,w_2}}{w_1^{a_1}w_2^{a_2}})
%\end{align*}
Lemma \ref{Edge}, we obtain the following result for the full contribution $$\text{Cont}_\Gamma = \sum_{w_1,w_2} \text{Cont}_{\Gamma,w_1,w_2}$$
of $\Gamma$ to $\sum_{d\geq 0} q^d (e(\text{Obs})\cap\left[ \overline{Q}_{g}(\PP^4,d)\right]^{\vir})$.

\begin{Prop} We have \label{propsim}
 \begin{multline*}
     \text{\em Cont}_{\Gamma}=
     \sum_{a_1,a_2>0}
     \ppl
{\psi^{a_1-1}}  \, \Big|\, \mathsf{H}_{g_1}^{p_i}\,     \ppr_{g_1,1}^{p_i,0+}
\ppl
{\psi^{a_2-1}}  \, \Big|\, \mathsf{H}_{g_2}^{p_j}\,     \ppr_{g_2,1}^{p_j,0+}\ \ \ \ \ \ \ \ \ \ \ \\
\ \ \ \ \ \ \ \ \ \ \cdot
     (-1)^{a_1+a_2}\left[e^{-\frac{\lann1,1\rann^{p_i,0+}_{0,2}}{x_1}}
       e^{-\frac{\lann1,1\rann^{p_j,0+}_{0,2}}{x_2}}e_i\left(\overline{\mathds{V}}_{ij}-\frac{\delta_{ij}}{e_i(x_1+x_2)}\right)e_j\right]_{x_1^{a_1-1}x_2^{a_2-1}}\, .
 \end{multline*}
\end{Prop}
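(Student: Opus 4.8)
The plan is to obtain the closed formula for $\text{Cont}_\Gamma$ by summing the per-weight localization expression displayed immediately above the statement over all choices of first-edge tangent weights $w_1,w_2$ and then invoking Lemma \ref{Edge} to repackage the resulting edge sum in terms of the series $\overline{\mathds{V}}_{ij}$ (here $p_i=\mathsf{p}(v_1)$ and $p_j=\mathsf{p}(v_2)$). All of the geometric input has already been extracted: the virtual localization formula for the two-vertex, one-edge graph together with the conversion \eqref{ppqqpp} produces the vertex--edge--vertex product recorded above the statement, so what remains is a purely formal assembly.

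Concretely, I would take the per-weight term $\text{Cont}_{\Gamma,w_1,w_2}$, whose $(a_1,a_2)$-summand is the product of $e^{\lann1,1\rann^{p_i,0+}_{0,2}/w_1}\,\ppl\psi^{a_1-1}\mid\mathsf{H}_{g_1}^{p_i}\ppr_{g_1,1}^{p_i,0+}/w_1^{a_1}$, the pure edge factor $\mathsf{E}^{w_1,w_2}_{ij}$, and the matching $p_j$-factor, and form $\text{Cont}_\Gamma=\sum_{w_1,w_2}\text{Cont}_{\Gamma,w_1,w_2}$. Interchanging the finite $(a_1,a_2)$-summation with the weight summation, I would pull the two vertex brackets $\ppl\psi^{a_1-1}\mid\mathsf{H}_{g_1}^{p_i}\ppr$ and $\ppl\psi^{a_2-1}\mid\mathsf{H}_{g_2}^{p_j}\ppr$ out of the weight sum, as they carry no dependence on $w_1,w_2$. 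What is left inside is exactly $\sum_{w_1,w_2} e^{\lann1,1\rann^{p_i,0+}_{0,2}/w_1}e^{\lann1,1\rann^{p_j,0+}_{0,2}/w_2}\,\mathsf{E}^{w_1,w_2}_{ij}/(w_1^{a_1}w_2^{a_2})$.

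Finally I would apply Lemma \ref{Edge}, which identifies this weight sum with $(-1)^{a_1+a_2}$ times the coefficient of $x_1^{a_1-1}x_2^{a_2-1}$ in $e^{-\lann1,1\rann^{p_i,0+}_{0,2}/x_1}e^{-\lann1,1\rann^{p_j,0+}_{0,2}/x_2}\,e_i\big(\overline{\mathds{V}}_{ij}-\frac{\delta_{ij}}{e_i(x_1+x_2)}\big)e_j$; the degenerate subtraction is precisely the degree-$0$ unstable term absent from the weight-decomposed edge contributions, as recorded in the relation $e_i(\overline{\mathds{V}}_{ij}-\delta_{ij}/(e_i(x+y)))e_j=\sum_{w_1,w_2}e_i\overline{\mathds{V}}_{ij}^{w_1,w_2}e_j$ preceding that lemma. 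Substituting this identity back reproduces the stated expression verbatim. The proof is therefore an assembly rather than a computation, and the only delicate points are the reindexing $a\mapsto a-1$ coming from the geometric expansion of $1/(w-\psi)$ (which forces $a_1,a_2>0$ and fixes the powers of $w_i$) and the consistent tracking of the sign $(-1)^{a_1+a_2}$ from Lemma \ref{Edge}; I anticipate no substantive obstacle beyond these bookkeeping conventions.
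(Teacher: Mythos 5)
Your proposal is correct and follows essentially the same route as the paper: the paper likewise expands the per-weight localization contribution via \eqref{ppqqpp} into the vertex--edge--vertex product, sums $\text{Cont}_\Gamma=\sum_{w_1,w_2}\text{Cont}_{\Gamma,w_1,w_2}$, and invokes Lemma \ref{Edge} to convert the weight sum over $\mathsf{E}^{w_1,w_2}_{ij}$ into the coefficient extraction from $e_i\bigl(\overline{\mathds{V}}_{ij}-\frac{\delta_{ij}}{e_i(x_1+x_2)}\bigr)e_j$. The bookkeeping points you flag (the shift $a\mapsto a-1$ forcing $a_1,a_2>0$, the sign, and the degenerate-term subtraction) are exactly the ones implicit in the paper's derivation.
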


\subsection{A general graph} We apply the argument of Section \ref{simgr}
to obtain a contribution formula for a general graph $\Gamma$.

Let $\Gamma\in \mathsf{G}_{g,0}(\PP^4)$ be a decorated graph as defined in Section \ref{locq}. The {\em flags} of $\Gamma$ are the 
half-edges{\footnote{Flags are either half-edges or markings.}}. Let $\mathsf{F}$ be the set of flags. 
Let
$$\mathsf{w}: \mathsf{F} \rightarrow \text{Hom}(\T, \com^*)\otimes_{\mathbb{Z}}{\mathbb{Q}}$$
be a fixed assignment of $\T$-weights to each flag.

We first consider the contribution $\text{Cont}_{\Gamma,\mathsf{w}}$ to 
$$\sum_{d\geq 0} q^d (e(\text{Obs})\cap\left[ \overline{Q}_{g}(\PP^4,d)\right]^{\vir})$$
of the $\T$-fixed loci associated $\Gamma$ satisfying
the following property:
the tangent weight on
the first rational component corresponding
to each $f\in \mathsf{F}$ is
exactly given by $\mathsf{w}(f)$.
We have 
\begin{equation}
    \label{s234}
    \text{Cont}_{\Gamma,\mathsf{w}} = \frac{1}{|\text{Aut}(\Gamma)|}
    \sum_{\mathsf{A} \in \ZZ_{> 0}^{\mathsf{F}}} \prod_{v\in \mathsf{V}} \text{Cont}^{\mathsf{A}}_{\Gamma,\mathsf{w}} (v)\prod _{e\in \mathsf{E}} {\text{Cont}}^{\mathsf{A}}_{\Gamma,\mathsf{w}}(e)\, .
\end{equation}
 The terms on the  right side of \eqref{s234} 
require definition:
\begin{enumerate}
\item[$\bullet$] The sum on the right is over 
the set $\ZZ_{> 0}^{\mathsf{F}}$ of
all maps 
$$\mathsf{A}: \mathsf{F} \rightarrow \ZZ_{> 0}$$
corresponding to the sum over $a_1,a_2$ in
Proposition \ref{propsim}.
\item[$\bullet$]
For $v\in \mathsf{V}$ with 
$n$ incident
flags with $\mathsf{w}$-values $(w_1,\ldots,w_n)$ and
$\mathsf{A}$-values
$(a_1,a_2,...,a_n)$, 
\begin{align*}
    \text{Cont}^{\mathsf{A}}_{\Gamma,{\mathsf{w}}}(v)=
    \frac{\ppl
\psi_1^{a_1-1}, \ldots,
\psi_n^{a_n-1}
\, \Big|\, \mathsf{H}_{\mathsf{g}(v)}^{\mathsf{p}(v)}\,     \ppr_{\mathsf{g}(v),n}^{\mathsf{p}(v),0+}}
{w_1^{a_1} \cdots w_n^{a_n}}\, .
\end{align*}
\item[$\bullet$]
For $e\in \mathsf{E}$ with 
assignments $(\mathsf{p}(v_1), \mathsf{p}(v_2))$
for the two associated vertices{\footnote{In case $e$
is self-edge, $v_1=v_2$.}} and 
$\mathsf{w}$-values $(w_1,w_2)$ for the two associated flags,
    $$    
    \text{Cont}_{\Gamma,\mathsf{w}}(e)=
    e^{\frac{\lann1,1\rann^{\mathsf{p}(v_1),0+}_{0,2}}{w_1}}
    e^{\frac{\lann1,1\rann^{\mathsf{p}(v_2),0+}_{0,2}}{w_2}}
    \mathsf{E}^{w_1,w_2}_{\mathsf{p}(v_1),\mathsf{p}(v_2)}\, .$$
\end{enumerate}
The localization formula then yields \eqref{s234}
just as in the simple case of Section \ref{simgr}.

By summing the contribution \eqref{s234} of $\Gamma$ over
all the weight functions $\mathsf{w}$
and applying Lemma \ref{Edge}, we obtain
the following result which generalizes 
Proposition \ref{propsim}.

\begin{Prop}\label{VE} We have
 $$
 \text{\em Cont}_\Gamma
     =\frac{1}{|\text{\em Aut}(\Gamma)|}
     \sum_{\mathsf{A} \in \ZZ_{> 0}^{\mathsf{F}}} \prod_{v\in \mathsf{V}} 
     \text{\em Cont}^{\mathsf{A}}_\Gamma (v)
     \prod_{e\in \mathsf{E}} \text{\em Cont}^{\mathsf{A}}_\Gamma(e)\, ,
 $$
 where the vertex and edge contributions 
 with incident flag $\mathsf{A}$-values $(a_1,\ldots,a_n)$
 and $(b_1,b_2)$ respectively are
 \begin{eqnarray*}
    \text{\em Cont}^{\mathsf{A}}_\Gamma (v)&=&
    \ppl
\psi_1^{a_1-1}, \ldots,
\psi_n^{a_n-1}
\, \Big|\, \mathsf{H}_{\mathsf{g}(v)}^{\mathsf{p}(v)}\,
  \ppr_{\mathsf{g}(v),n}^{\mathsf{p}(v),0+}\,  ,\\
    \text{\em Cont}^{\mathsf{A}}_\Gamma(e)
    &=&
    (-1)^{b_1+b_2}\left[e^{-\frac{\lann1,1\rann^{\mathsf{p}(v_1),0+}_{0,2}}{x_1}}
       e^{-\frac{\lann1,1\rann^{\mathsf{p}(v_2),0+}_{0,2}}{x_2}}e_i\left(\overline{\mathds{V}}_{ij}-\frac{1}{e_i(x_1+x_2)}\right)e_j\right]_{x_1^{b_1-1}x_2^{b_2-1}}\, ,
%    (e^{-\frac{<1,1>}{x_1}}e^{-\frac{<1,1>}{x_2}}\mathds{V}_{e(1)e(2)})_{x_1^{b_1-1}x_2^{b_2-1}}.
 \end{eqnarray*}
where $\mathsf{p}(v_1)=p_i$ and $\mathsf{p}(v_2)=p_j$ in the second equation. 
\end{Prop}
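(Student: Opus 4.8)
The plan is to run the argument of Section~\ref{simgr} verbatim, now summing over the full vertex and edge sets of an arbitrary $\Gamma$. I would begin from the Graber--Pandharipande localization formula \cite{GP} for $\sum_{d\geq 0} q^d\, e(\text{Obs})\cap[\overline{Q}_g(\PP^4,d)]^{\vir}$. Fixing a weight assignment $\mathsf{w}: \mathsf{F}\to\text{Hom}(\T,\com^*)\otimes_\ZZ\QQ$ on the flags, the corresponding piece of the $\T$-fixed locus is a product over $\mathsf{V}$ and $\mathsf{E}$: over each $v$ sits a moduli space of genus $\mathsf{g}(v)$ curves contracted to $p_{\mathsf{p}(v)}$, carrying the incident flags as markings and the base points as light markings, while over each $e$ sits the edge locus lying over the invariant line joining its two incident fixed points. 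Since both $e(\text{Obs})$ and the inverse Euler class $1/e(\text{Nor})$ factor multiplicatively across these pieces, and the base points at each vertex resum into the classes $F_{\mathsf{p}(v),k}$, one obtains the per-weight product formula \eqref{s234}, which I would take as the starting identity.

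The vertex factor in \eqref{s234} is the local $0+$ correlator $\lannn \frac{1}{w_1-\psi},\ldots,\frac{1}{w_n-\psi}\,\big|\,\mathsf{H}_{\mathsf{g}(v)}^{\mathsf{p}(v)}\rannn^{\mathsf{p}(v),0+}_{\mathsf{g}(v),n}$ attached to the $n$ incident flags. Expanding each $\frac{1}{w_i-\psi}$ as a geometric series and invoking the genus-reduction formula of Proposition~\ref{GR2} together with the wall-crossing of Proposition~\ref{WC} --- precisely the combination packaged in \eqref{ppqqpp} --- rewrites this factor as the $\pP$-polynomial $\ppl \psi_1^{a_1-1},\ldots,\psi_n^{a_n-1}\,\big|\,\mathsf{H}^{\mathsf{p}(v)}_{\mathsf{g}(v)}\ppr^{\mathsf{p}(v),0+}_{\mathsf{g}(v),n}$ divided by the weight monomial $w_1^{a_1}\cdots w_n^{a_n}$, weighted by the vertex exponentials $e^{\lann 1,1\rann^{\mathsf{p}(v),0+}_{0,2}/w_i}$. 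The edge factor is the pure edge term $\mathsf{E}^{w_1,w_2}_{\mathsf{p}(v_1),\mathsf{p}(v_2)}$ dressed with the two exponentials of its flags.

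The decisive step is summing \eqref{s234} over all weight assignments $\mathsf{w}$. Here I would reorganize the product so that, for each edge $e$, the weight monomials $w_1^{-b_1}$ and $w_2^{-b_2}$ of its two flags --- which sit on the adjacent vertex factors in \eqref{s234} --- are grouped together with that edge's own exponentials, its sign $(-1)^{b_1+b_2}$, and its $\mathsf{E}^{w_1,w_2}$-term. The resulting sum over the pair $(w_1,w_2)$ is exactly the right-hand side of Lemma~\ref{Edge}, so it collapses to the coefficient extraction $(-1)^{b_1+b_2}\big[e^{-\lann1,1\rann^{\mathsf{p}(v_1),0+}_{0,2}/x_1}e^{-\lann1,1\rann^{\mathsf{p}(v_2),0+}_{0,2}/x_2}e_i(\overline{\mathds{V}}_{ij}-\tfrac{1}{e_i(x+y)})e_j\big]_{x_1^{b_1-1}x_2^{b_2-1}}$, which is the claimed $\text{Cont}^{\mathsf{A}}_\Gamma(e)$. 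Stripped of their weight monomials, the vertex factors are precisely the $\mathsf{A}$-indexed contributions $\text{Cont}^{\mathsf{A}}_\Gamma(v)$, and the prefactor $1/|\text{Aut}(\Gamma)|$ carries over from \eqref{s234}.

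The main obstacle is the flag-by-flag bookkeeping in this final reorganization. One must verify that each flag's weight variable and $\mathsf{A}$-value are used exactly once by its incident vertex (supplying $\psi^{a-1}$) and once by its incident edge (supplying the monomial $w^{-a}$ that Lemma~\ref{Edge} consumes), with a consistent assignment across every node and no double counting, so that the sum over $\mathsf{A}\in\ZZ^{\mathsf{F}}_{>0}$ genuinely factors into the advertised product of independent vertex and edge sums. Self-edges, where $v_1=v_2$, must be checked to obey the same edge formula with both of their flags attached to the single incident vertex. Once this pairing is established, the argument is a direct transcription of the two-vertex computation behind Proposition~\ref{propsim}.
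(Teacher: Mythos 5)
Your proposal is correct and follows essentially the same route as the paper: establish the per-weight-assignment product formula \eqref{s234} via the localization formula exactly as in the two-vertex case of Section \ref{simgr} (using \eqref{ppqqpp} to convert each local vertex correlator into $\pP$-polynomials, weight monomials, and exponentials), then sum over all weight functions $\mathsf{w}$ and apply Lemma \ref{Edge} edge by edge to absorb the monomials $w_1^{-b_1}w_2^{-b_2}$, the exponentials, and the pure edge terms $\mathsf{E}^{w_1,w_2}_{ij}$ into the coefficient-extraction form of $\text{Cont}^{\mathsf{A}}_\Gamma(e)$. The flag-by-flag bookkeeping you flag as the main obstacle is precisely the content the paper leaves implicit with ``just as in the simple case,'' so your treatment is, if anything, slightly more explicit on that point.
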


\subsection{Legs} 
Let $\Gamma \in \mathsf{G}_{g,n}(\PP^4)$ be a decorated graph
with markings. While no markings are needed to define the
stable quotient invariants of formal quintic, the contributions
of decorated graphs with markings will appear in the
proof of the holomorphic anomaly equation.
The formula for the contribution $\text{Cont}_\Gamma(H^{k_1},\ldots,H^{k_n})$
of $\Gamma$ to 
\begin{align*}
    \sum_{d\ge 0}q^d \prod_{j=0}^n \text{ev}^*(H^{k_j})\cdot e(\text{Obs})\cap\left[ \overline{Q}_{g}(\PP^4,d)\right]^{\vir}
\end{align*}
is given by the following result.
\begin{Prop}\label{VEL} We have
 \begin{multline*}
 \text{\em Cont}_\Gamma(H^{k_1},\ldots,H^{k_n})
     =\\\frac{1}{|\text{\em Aut}(\Gamma)|}
     \sum_{\mathsf{A} \in \ZZ_{>0}^{\mathsf{F}}} \prod_{v\in \mathsf{V}} 
     \text{\em Cont}^{\mathsf{A}}_\Gamma (v)
     \prod_{e\in \mathsf{E}} \text{\em Cont}^{\mathsf{A}}_\Gamma(e)
     \prod_{l\in \mathsf{L}} \text{\em Cont}^{\mathsf{A}}_\Gamma(l)\, ,
 \end{multline*}
 where the leg contribution 
% with $\mathsf{A}$-value $\mathsf{A}(l)$
 is 
 \begin{eqnarray*}
     \text{\em Cont}^{\mathsf{A}}_\Gamma(l)
    &=&
    (-1)^{\mathsf{A}(l)-1}\left[e^{-\frac{\lann1,1\rann^{\mathsf{p}(l),0+}_{0,2}}{z}}
       \overline{\mathds{S}}_{\mathsf{p}(l)}(H^{k_l})\right]_{z^{\mathsf{A}(l)-1}}\, .
 \end{eqnarray*}
The vertex and edge contributions are same as before.
\end{Prop}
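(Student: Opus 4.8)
The plan is to run the localization argument of Proposition \ref{VE} verbatim through the vertex and edge factors, and to handle each leg by the same factorization device that Lemma \ref{Edge} applied to edges. I would begin from the virtual localization formula for
$$\sum_{d\ge 0}q^d\,\prod_{j=1}^n\text{ev}_j^*(H^{k_j})\cdot e(\text{Obs})\cap[\overline{Q}_{g,n}(\PP^4,d)]^{\vir}\,,$$
organized by decorated graphs $\Gamma\in\mathsf{G}_{g,n}(\PP^4)$ and, for each $\Gamma$, by a weight assignment $\mathsf{w}$ on the full flag set $\mathsf{F}$ (now including the leg flags). Since the insertion $\text{ev}_l^*(H^{k_l})$ is supported on the vertex over $\mathsf{p}(l)$, the contribution $\text{Cont}_{\Gamma,\mathsf{w}}$ factorizes into a product over $\mathsf{V}$, $\mathsf{E}$, and $\mathsf{L}$. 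The vertex and edge factors are literally those of \eqref{s234}: the descendent power $\psi^{\mathsf{A}(l)-1}$ at a leg flag is absorbed into the vertex correlator $\text{Cont}^{\mathsf{A}}_\Gamma(v)$ exactly as for a half-edge, so only the genuinely new leg factor requires work.

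For the leg factor I would prove the evident analogue of Lemma \ref{Edge}, starting from $\mathds{S}_i(\gamma)=e_i\lla\frac{\phi_i}{z-\psi},\gamma\rra^{0+,0+}_{0,2}$. Here the insertion $\frac{\phi_i}{z-\psi}$ is the node joining the leg's rational tail to the vertex over $p_i=\mathsf{p}(l)$, while $\gamma=H^{k_l}$ is the honest marking at the far end. Restricting to $t=0$ and expanding in the flag weight $w$, the localization formula writes the leg contribution in the same ``flag-weight sum'' shape as the edge computation preceding Lemma \ref{Edge}, namely a sum over $w$ of the pure tail contribution at weight $w$, weighted by $e^{\lann 1,1\rann^{p_i,0+}_{0,2}/w}/w^{\mathsf{A}(l)}$. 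The one structural difference is that a leg carries a single open node rather than two; this is visible in the factorization \eqref{wdvv}, where the edge series $\overline{\mathds{V}}_{ij}$ is assembled from two $\overline{\mathds{S}}$-factors over $x+y$, so that a single leg is morally the ``square root'' of an edge. Consequently I strip a \emph{single} degenerate $(0,2)$ factor (evaluated via Proposition \ref{GR22}, whose role is to cancel the prefactor $e^{\mu\lambda_i/z}$ of \eqref{VS}) by multiplying by $e^{-\lann 1,1\rann^{\mathsf{p}(l),0+}_{0,2}/z}$, and I extract a single coefficient $z^{\mathsf{A}(l)-1}$; the passage from the flag-weight variable $w$ to the descendent variable $z$ of $\overline{\mathds{S}}_i$ produces the single sign $(-1)^{\mathsf{A}(l)-1}$, in the same way the two flag variables produced $(-1)^{b_1+b_2}$ for an edge. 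This yields
$$\text{Cont}^{\mathsf{A}}_\Gamma(l)=(-1)^{\mathsf{A}(l)-1}\Big[e^{-\frac{\lann 1,1\rann^{\mathsf{p}(l),0+}_{0,2}}{z}}\,\overline{\mathds{S}}_{\mathsf{p}(l)}(H^{k_l})\Big]_{z^{\mathsf{A}(l)-1}}\,.$$

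Summing over all weight assignments $\mathsf{w}$ and all $\mathsf{A}\in\ZZ_{>0}^{\mathsf{F}}$ then reassembles, at each leg, the full series $\overline{\mathds{S}}_{\mathsf{p}(l)}(H^{k_l})$ from its weight-$w$ pieces, exactly as the passage from \eqref{s234} to Proposition \ref{VE} did for edges, giving the stated product formula. The main obstacle I anticipate is the bookkeeping at the leg-to-vertex node: one must verify that the single degenerate $(0,2)$ factor removed here is precisely the one implicit in the definition of $\mathds{S}_i$ (in contrast to the two stripped for an edge in Lemma \ref{Edge}), and that the descendent variable $z$ of $\mathds{S}_i$ is correctly identified with the flag-weight variable so that the coefficient extraction in $z^{\mathsf{A}(l)-1}$ lines up with the vertex $\psi$-power $\mathsf{A}(l)-1$. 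Once this identification is pinned down, the leg case is formally identical to the edge case already established.
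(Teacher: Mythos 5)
Your proposal is correct and takes essentially the same route the paper intends: the paper gives no details for Proposition \ref{VEL} beyond saying it ``follows the vertex and edge analysis,'' and your argument supplies exactly those details, keeping the vertex and edge factors of Proposition \ref{VE} unchanged and deriving the leg factor as the single-$\overline{\mathds{S}}$ analogue of Lemma \ref{Edge}, with one degenerate $(0,2)$ factor $e^{-\lann 1,1\rann^{\mathsf{p}(l),0+}_{0,2}/z}$ stripped and the single sign $(-1)^{\mathsf{A}(l)-1}$ coming from the geometric expansion of $\frac{1}{z+w}$. Your flagged bookkeeping point about identifying the descendent variable $z$ with the flag weight at the leg-to-vertex node is precisely the check the paper leaves to the reader, and your resolution of it is correct.
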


The proof of Proposition \ref{VEL} 
follows the vertex and edge analysis. We leave the
details as an exercise for the reader.
The parallel statement for Gromov-Witten theory
can be found in \cite{Elliptic, SS,Book}.

\section{Vertices, edges, and legs} \label{svel}
\subsection{Overview}
Following the analysis of 
$K\proj^2$ in \cite[Section 6]{LP1}
which uses results of Givental \cite{Elliptic,SS,Book} and the  wall-crossing  of \cite{CKg}, we calculate here 
the vertex and edge contributions
 in terms of the function $R_k$ of Section \ref{furcalc}.

\subsection{Calculations in genus 0}
We follow the notation introduced in Section \ref{intmg}. Recall
the series
$$T(c)=t_0 +t_1 c+t_2 c^2+\ldots\, .$$ 

\begin{Prop} {\em (Givental \cite{Elliptic,SS,Book})} For $n\geq 3$, we have
\begin{multline*}
\lann
 1,\ldots,1\rann_{0,n}^{p_i,\infty} = \\
 (\sqrt{\Delta_i})^{2g-2+n}\left(\sum_{k\geq 0}\frac{1}{k!}\int_{\overline{M}_{0,n+k}}T(\psi_{n+1})\cdots T(\psi_{n+k})\right)\Big|_{t_0=0,t_1=0,t_{j\ge 2}=(-1)^j\frac{Q_{j-1}}{\lambda_i^{j-1}}}
 \end{multline*}
 where the functions $\sqrt{\Delta_i}$, $Q_l$ are defined by 
 \begin{align*}
     \overline{\mathds{S}}^{\infty}_i(1) =  e_i \Big\langle \Big\langle  \frac{\phi _i}{z-\psi} , 1 
\Big\rangle \Big\rangle _{0, 2}^{p_i,\infty}=\frac{e^{\frac{\lann1,1\rann^{p_i,\infty}_{0,2}}{z}}}{\sqrt{\Delta_i}}
\left( 1+\sum_{l=1}^\infty Q_l \left(\frac{z}{\lambda_i}\right)^{l}\right)\, .
 \end{align*}
\end{Prop}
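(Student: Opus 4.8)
The plan is to recognize this as the genus-$0$ instance of Givental's identification of a single-vertex localization contribution with the $R$-matrix (quantization) action on the trivial theory, and to verify that the data $\sqrt{\Delta_i}$ and $Q_l$ extracted from $\overline{\mathds{S}}^{\infty}_i(1)$ match the normalizations in that formula. First I would unfold the left-hand side. Since the Hodge bundle $\mathbb{E}_0$ has rank $0$ on $\overline{M}_{0,n+k}$, the vertex class \eqref{hhbb} satisfies $\mathsf{H}_0^{p_i}=1$, so by the definition of $\lann\,\cdot\,\rann^{p_i,\infty}_{0,n}$,
$$\lann 1,\ldots,1\rann_{0,n}^{p_i,\infty}=\sum_{k\geq 0}\frac{1}{k!}\int_{\overline{M}_{0,n+k}}\prod_{j=1}^k S_{p_i}(\psi_{n+j})\,.$$
Comparing with the definition of $\lann\,\cdot\,\rann_{0,n}$ in Section \ref{intmg}, it then suffices to express the leg series $S_{p_i}(\psi)$ through $T(\psi)$ under the asserted specialization, while accounting for the normalization $\sqrt{\Delta_i}$ and the exponential $e^{\lann 1,1\rann^{p_i,\infty}_{0,2}/\psi}$ appearing in the asymptotics of $\overline{\mathds{S}}^{\infty}_i(1)$ (here $g=0$, so the target exponent is $2g-2+n=n-2$).

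Second, I would read off $S_{p_i}$ from $\overline{\mathds{S}}^{\infty}_i(1)$. The series $S_{p_i}$ is by construction the rational tail attached over $p_i$, hence governed by the genus-$0$ two-point function $\overline{\mathds{S}}^{\infty}_i(1)$. Setting $R_i(z)=1+\sum_{l\geq 1}Q_l(z/\lambda_i)^l$ from the stated factored form, one checks that the dilaton-shifted insertion $T(z)-z$ equals $-zR_i(-z)$: expanding $-z(R_i(-z)-1)$ term by term gives the coefficient of $z^{j}$ as $(-1)^{j}Q_{j-1}/\lambda_i^{j-1}$, so that $t_0=0$, $t_1=0$ and $t_{j}=(-1)^j Q_{j-1}/\lambda_i^{j-1}$ for $j\geq 2$, with the alternating sign and the shift $l=j-1$ produced exactly by the dilaton shift $T(z)\mapsto T(z)-z$ and the reflection $z\mapsto -z$. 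The two remaining decorations are then absorbed using Lemma \ref{stst}: the per-leg exponential $e^{\lann 1,1\rann^{p_i,\infty}_{0,2}/\psi}$ is removed by the string equation, which simultaneously forces the value $t_0=0$ (consistent with $\lann 1,1\rann_{0,2}|_{t_0=0}=0$), and the scalar normalization $\sqrt{\Delta_i}$ carried on the legs is converted by the dilaton equation into the single overall factor $(\sqrt{\Delta_i})^{2g-2+n}$, the exponent being the Euler-characteristic weight of the $n$-valent genus-$g$ vertex.

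The hard part will be this last bookkeeping step: tracking how the per-leg normalization $\sqrt{\Delta_i}$ and the exponential reorganize, under repeated application of the string and dilaton equations of Lemma \ref{stst}, into precisely the power $(\sqrt{\Delta_i})^{2g-2+n}$ together with the clean specialization $t_0=t_1=0$, and confirming that the signs in $t_{j}=(-1)^j Q_{j-1}/\lambda_i^{j-1}$ remain consistent throughout. Since this is exactly the genus-$0$ case of Givental's quantization/$R$-matrix formula, rather than redo the combinatorics I would invoke the argument of \cite{Elliptic,SS,Book} in the form already carried out for $K\proj^2$ in \cite[Section 6]{LP1}, verifying only that the definitions of $\sqrt{\Delta_i}$ and $Q_l$ through the asymptotics of $\overline{\mathds{S}}^{\infty}_i(1)$ agree with the normalizations used there.
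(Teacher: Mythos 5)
The paper gives no proof of this Proposition at all: it is quoted as a known result of Givental (hence the attribution to \cite{Elliptic,SS,Book}), so there is nothing in the text to compare your argument against line by line. Your sketch is a correct outline of the standard argument and, in particular, your verification that the dilaton shift $T(z)-z=-zR_i(-z)$ with $R_i(z)=1+\sum_{l\ge1}Q_l(z/\lambda_i)^l$ reproduces exactly the specialization $t_0=t_1=0$, $t_j=(-1)^jQ_{j-1}/\lambda_i^{j-1}$ is the right consistency check of normalizations; deferring the string/dilaton bookkeeping to \cite{Elliptic,SS,Book} and \cite[Section 6]{LP1} is exactly what the authors do. One small slip: for $g=0$ the vertex class \eqref{hhbb} is $\mathsf{H}_0^{p_i}=1/e_i$, not $1$ (the rank-$0$ Hodge factors are $1$, but the untwisted $e(T_{p_i}(\PP^4))$ in the denominator and the $5\lambda_i$ survive); this is immaterial here only because the correlator $\lann 1,\ldots,1\rann_{0,n}^{p_i,\infty}$ is defined with $\gamma=1$ rather than with the vertex class inserted, so your unfolding of the left-hand side is still correct by definition.
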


From \eqref{VS} and Proposition \ref{WC}, we have
\begin{align*}
    \lann1,1\rann^{p_i,\infty}_{0,2}=\mu \lambda_i\,,\\
    \sqrt{\Delta_i}=\frac{C_0}{R_0}\,,\\
    Q_k=\frac{R_k}{R_0}\,.
\end{align*}

Using Proposition \ref{WC} again, we have proven the following result.

\begin{Prop}\label{q2q2} For $n\geq 3$, we have \label{zaa3}
\begin{multline*}
\lann
 1,\ldots,1\rann_{0,n}^{p_i,0+} = \\
 R_0^{2-n}\left(\sum_{k\geq 0}\frac{1}{k!}\int_{\overline{M}_{0,n+k}}T(\psi_{n+1})\cdots T(\psi_{n+k})\right)\Big|_{t_0=0,t_1=0,t_{j\ge 2}=(-1)^j\frac{R_{j-1}}{\lambda_i^{j-1}R_0}}\, .
 \end{multline*}
% \begin{align*}
%     \lann
% 1,\ldots,1\rann_{0,k}^{p_i,0+} = %\sum_{n}(\int_{\overline{M}_{g,k+n}}T\ldots T)|_{t_0=0,t_1=0,t_{j\ge %2}=(-1)^j\frac{R_{j-1}}{\lambda_i^{j-1}}}
% \end{align*}
\end{Prop}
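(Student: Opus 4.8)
The plan is to derive the $0+$ formula directly from the $\infty$ formula of the preceding Givental proposition, by feeding in the two identifications $\sqrt{\Delta_i}=C_0/R_0$ and $Q_k=R_k/R_0$ and then applying the wall-crossing relation of Proposition \ref{WC} one more time. Since the surrounding text already records these identifications and asserts the result ``follows from Proposition \ref{WC} again,'' the task is to make the bookkeeping explicit.

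First I would establish the identifications. The quantities $\sqrt{\Delta_i}$ and $Q_l$ are defined through the normalized asymptotic expansion of the $\infty$-series $\overline{\mathds{S}}^{\infty}_i(1)$, whereas the $0+$-series $\overline{\mathds{S}}_i(1)$ has the expansion \eqref{VS}, namely $e^{\mu\lambda_i/z}C_0^{-1}\big(R_0+R_1(z/\lambda_i)+\ldots\big)$ with $R_{0k}=R_k$. Rewriting the latter as $\frac{R_0}{C_0}e^{\mu\lambda_i/z}\big(1+\sum_{l\ge1}\frac{R_l}{R_0}(z/\lambda_i)^l\big)$ and comparing with the definition of $\sqrt{\Delta_i}$ and $Q_l$ after the mirror-map substitution $q\mapsto Q(q)$ — which is precisely the content of the wall-crossing of Proposition \ref{WC} applied to the localized series $\mathds{S}_i$ — the matching of exponential prefactors gives $\lann 1,1\rann^{p_i,\infty}_{0,2}=\mu\lambda_i$, the matching of leading coefficients gives $\sqrt{\Delta_i}=C_0/R_0$, and the matching of the coefficient of $(z/\lambda_i)^k$ gives $Q_k=R_k/R_0$.

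Substituting these into Givental's formula replaces the prefactor $(\sqrt{\Delta_i})^{2g-2+n}$ by $(C_0/R_0)^{n-2}$ (here $g=0$, so $2g-2+n=n-2$) and replaces each $(-1)^j Q_{j-1}/\lambda_i^{j-1}$ by $(-1)^j R_{j-1}/(\lambda_i^{j-1}R_0)$, which is exactly the specialization in the target formula. It then remains to pass from the $\infty$-correlator to the $0+$-correlator: applying Proposition \ref{WC} to $\lann 1,\ldots,1\rann_{0,n}$ and using $I^{\mathsf{Q}}_0=C_0$ from \eqref{y999} gives $\lann 1,\ldots,1\rann_{0,n}^{p_i,\infty}(Q(q))=C_0^{\,n-2}\,\lann 1,\ldots,1\rann_{0,n}^{p_i,0+}(q)$. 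Dividing by $C_0^{\,n-2}$ cancels the $C_0^{\,n-2}$ coming from $(C_0/R_0)^{n-2}$ and leaves the prefactor $R_0^{2-n}=R_0^{2-2g-n}$, which is the claimed identity.

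The main obstacle is bookkeeping rather than any new geometric input: one must apply the change of variables $q\mapsto Q(q)$ consistently, so that $\sqrt{\Delta_i}$ and $Q_k$, a priori functions of the $\infty$-variable, are correctly re-expressed as the $q$-series $C_0/R_0$ and $R_k/R_0$, and so that all powers of $C_0=I^{\mathsf{Q}}_0$ cancel against one another. One should also verify that the $t_0=t_1=0$ specialization and the $R_{0k}=R_k$ normalization of \eqref{VS} are compatible on both sides; once the $C_0$ powers are tracked, what remains is a direct substitution.
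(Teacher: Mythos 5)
Your proposal is correct and follows essentially the same route as the paper: the paper derives $\sqrt{\Delta_i}=C_0/R_0$ and $Q_k=R_k/R_0$ from the asymptotic expansion \eqref{VS} together with Proposition \ref{WC}, and then applies Proposition \ref{WC} once more to the $n$-point correlator so that the factor $(I_0^{\mathsf{Q}})^{n-2}=C_0^{n-2}$ cancels the $C_0^{n-2}$ in $(\sqrt{\Delta_i})^{n-2}$, leaving $R_0^{2-n}$. Your write-up simply makes explicit the bookkeeping that the paper compresses into the phrase ``Using Proposition \ref{WC} again.''
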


 Proposition \ref{q2q2} immediately implies the evaluation
\begin{equation} \label{fxxf}
\lann
 1,1,1\rann_{0,3}^{p_i,0+}=\frac{1}{R_0}\, .
 \end{equation}
Another simple consequence of Proposition \ref{zaa3} is the following 
 basic property.
\begin{Cor}\label{Poly} For $n\geq 3$, we have
 $
 \lann
 1,\ldots,1\rann_{0,n}^{p_i,0+} \in \CC[R_0^{\pm 1},R_1,R_2,...][\lambda_i^{-1}]
 $.
\end{Cor}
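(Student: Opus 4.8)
The plan is to extract the statement directly from the explicit genus-$0$ evaluation of Proposition~\ref{q2q2}. Setting $g=0$ there, one has
\[
\lann 1,\ldots,1\rann_{0,n}^{p_i,0+}
= R_0^{\,2-n}\left(\sum_{k\geq 0}\frac{1}{k!}\int_{\overline{M}_{0,n+k}}T(\psi_{n+1})\cdots T(\psi_{n+k})\right)\Big|_{\mathrm{spec}},
\]
where the specialization $\mathrm{spec}$ imposes $t_0=t_1=0$ and $t_j=(-1)^j R_{j-1}/(\lambda_i^{\,j-1}R_0)$ for $j\geq 2$. Since the prefactor $R_0^{\,2-n}$ already lies in $\CC[R_0^{\pm1},R_1,R_2,\ldots][\lambda_i^{-1}]$, it suffices to show that the bracketed factor does as well.

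First I would observe that, after the specialization $t_0=t_1=0$, the series $T(c)=\sum_{j\geq 2}t_j c^j$ begins in degree $2$, so each factor $T(\psi_{n+\ell})$ contributes a $\psi$-monomial of degree at least $2$. Because $\dim \overline{M}_{0,n+k}=n+k-3$, the integral vanishes unless the total $\psi$-degree equals $n+k-3$; a nonzero contribution therefore forces $2k\leq n+k-3$, i.e.\ $k\leq n-3$. Hence the apparently infinite sum over $k$ truncates to the finite range $0\leq k\leq n-3$, and each surviving integral is a genuine polynomial in the variables $t_2,t_3,\ldots$ with coefficients the rational intersection numbers on $\overline{M}_{0,n+k}$.

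It then remains only to substitute. Replacing each $t_j$ by $(-1)^j R_{j-1}/(\lambda_i^{\,j-1}R_0)$ turns every monomial in the $t_j$'s into a monomial in $R_0^{-1}$, the $R_{j-1}$ with $j\geq 2$, and $\lambda_i^{-1}$, all of which lie in $\CC[R_0^{\pm1},R_1,R_2,\ldots][\lambda_i^{-1}]$; multiplying by $R_0^{\,2-n}$ preserves membership in this ring, giving the claim. The only genuinely substantive point is the finiteness observation of the middle paragraph: without the vanishing $t_0=t_1=0$ forcing each factor into degree $\geq 2$, the sum over $k$ would be an infinite series and the polynomiality assertion could fail. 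Everything past that is a routine substitution, which is presumably why the authors label this a corollary.
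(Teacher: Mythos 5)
Your proposal is correct and follows exactly the route the paper intends: the paper offers no written proof, simply calling the corollary a ``simple consequence'' of Proposition \ref{zaa3}, and your argument supplies precisely the justification --- the prefactor $R_0^{2-n}$, the truncation of the sum over $k$ to $k\leq n-3$ forced by $t_0=t_1=0$ and the dimension of $\overline{M}_{0,n+k}$, and the routine substitution $t_j=(-1)^jR_{j-1}/(\lambda_i^{j-1}R_0)$. Your identification of the finiteness observation as the only substantive point is accurate.
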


\subsection{Vertex and edge analysis}
By Proposition \ref{VE}, we have decomposition of the
contribution to $\Gamma\in \mathsf{G}_{g}(\PP^4)$ to
the stable quotient theory of 
formal quintic 
into vertex terms and edge terms
$$
 \text{Cont}_\Gamma
     =\frac{1}{|\text{Aut}(\Gamma)|}
     \sum_{\mathsf{A} \in \ZZ_{> 0}^{\mathsf{F}}} \prod_{v\in \mathsf{V}} 
     \text{Cont}^{\mathsf{A}}_\Gamma (v)
     \prod_{e\in \mathsf{E}} \text{Cont}^{\mathsf{A}}_\Gamma(e)\, .
 $$

%\begin{eqnarray*}
%\sum_{d\geq0}  q^d \left[ %\overline{Q}_{g}(K\PP^2,d)\right]^{\vir}%_{\Gamma}=\\
%     &\frac{1}{Aut(\Gamma)}\sum_{I \in %\ZZ_{\ge 0}^{\Gamma(F)}} \prod_v %Cont^I_\Gamma (v)\prod _e %%Cont^I_\Gamma(e).
%\end{align*}

\begin{Lemma}\label{L1} We have
    $\text{\em Cont}^{\mathsf{A}}_\Gamma (v)\in \CC(\lambda_0,\dots,\lambda_4)[L^{\pm1}]$. 
\end{Lemma}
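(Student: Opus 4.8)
The plan is to unwind the definition of the vertex contribution
$\text{Cont}^{\mathsf{A}}_\Gamma(v)$ given in Proposition \ref{VE} and to show that after
substituting the explicit genus~0 evaluations from Section~\ref{svel}, the
result lies in $\CC(\lambda_0,\dots,\lambda_4)[L^{\pm 1}]$. Recall that
\begin{align*}
\text{Cont}^{\mathsf{A}}_\Gamma(v)=
\ppl \psi_1^{a_1-1},\ldots,\psi_n^{a_n-1}
\,\Big|\, \mathsf{H}^{\mathsf{p}(v)}_{\mathsf{g}(v)}\ppr^{\mathsf{p}(v),0+}_{\mathsf{g}(v),n}\,,
\end{align*}
which by definition is the rational function
$\pP^{a_1-1,\ldots,a_n-1,\mathsf{H}^{\mathsf{p}(v)}_{\mathsf{g}(v)}}_{\mathsf{g}(v),n}$ evaluated at
$s_i=\lann 1,\ldots,1\rann^{\mathsf{p}(v),0+}_{0,i+3}$. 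The first step is therefore to
invoke Proposition \ref{zaa3} (equivalently Corollary \ref{Poly}) to record that each
genus~0 input $\lann 1,\ldots,1\rann^{p_i,0+}_{0,i+3}$ lies in
$\CC[R_0^{\pm 1},R_1,R_2,\ldots][\lambda_i^{-1}]$.

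First I would reduce the Hodge vertex class $\mathsf{H}^{p_i}_{g}$ to something explicit.
By \eqref{hhbb}, $\mathsf{H}^{p_i}_{g}$ is built from Chern classes of the dual Hodge bundle
$\mathbb{E}_g^*$ twisted by the tangent weights $\lambda_i-\lambda_j$ and by $5\lambda_i$; upon
expanding $e(\mathbb{E}_g^*\otimes\alpha)$ in the $\lambda$'s one sees that
$\mathsf{H}^{p_i}_g$ is a polynomial in the $\lambda_i^{-1}$'s (and the off-diagonal
weights) with coefficients that are tautological Hodge classes $\lambda_j=c_j(\mathbb{E}_g)$
on $\overline{M}_{g,n}$. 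The key point is that $\mathsf{H}^{p_i}_g$ expands as a sum of terms
$\kappa\cdot(\text{coefficient in }\CC(\lambda_0,\dots,\lambda_4))$, where each $\kappa$ is a
monomial in $\psi$- and $\lambda$-classes. Thus the correlator
$\text{Cont}^{\mathsf{A}}_\Gamma(v)$ is a $\CC(\lambda_0,\dots,\lambda_4)$-linear combination of the
basic correlators $\lann\psi^{a_1},\ldots,\psi^{a_n}\,|\,\gamma\rann^{p_i,0+}_{g,n}$ with
$\gamma$ a monomial in Hodge classes, and each of these is, via Proposition \ref{GR1}
together with the wall-crossing Proposition \ref{WC}, expressible through the universal
rational functions $\pP$ evaluated at the genus~0 data.

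The crucial second step is to show that the $\lambda_i$-dependence collapses to
$L$-dependence after the specialization \eqref{spez}, $\lambda_i=\zeta^i$. Here I would
argue as follows. By Proposition \ref{RPoly} each $R_k\in\CC[L^{\pm 1}]$, so after the
substitution the genus~0 inputs $s_i$ live in $\CC[R_0^{\pm 1},R_1,\ldots][\lambda_i^{-1}]
=\CC[L^{\pm 1}][\lambda_i^{-1}]$. The overall correlator is homogeneous of degree~$0$ in the
equivariant parameters (the integral \eqref{fredfredfred} is degree~$0$ in localized
equivariant cohomology), so all net powers of $\lambda_i$ must cancel; what survives is a
polynomial expression in the $R_k$ and in ratios of the weights, which under
$\lambda_i=\zeta^i$ becomes an element of $\CC[L^{\pm 1}]$ with coefficients pulled from
$\CC(\lambda_0,\dots,\lambda_4)$. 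Combining the two steps gives the claimed membership
$\text{Cont}^{\mathsf{A}}_\Gamma(v)\in\CC(\lambda_0,\dots,\lambda_4)[L^{\pm 1}]$.

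The main obstacle I anticipate is the bookkeeping of degrees and weights in the second
step: one must check that the apparent $\lambda_i^{-1}$ poles coming from
Corollary \ref{Poly} are genuinely cancelled by the tangent-weight factors in the Hodge
class $\mathsf{H}^{p_i}_g$ and by the homogeneity of the correlator, so that no fractional
or negative powers of the individual $\lambda_i$ remain after specialization. This is a
weight-tracking argument rather than a new idea, and it parallels the analogous step for
$K\PP^2$ in \cite[Section~6]{LP1}; I would carry it out by fixing the degree grading in
which each $R_k$ has weight zero and each $\psi$-insertion contributes a fixed
$\lambda_i$-power, and then verifying that the total degree matches so that the result is a
Laurent polynomial in $L$ alone.
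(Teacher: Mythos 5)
Your first step is exactly the paper's proof: by Proposition \ref{VE} the vertex term is the universal rational function $\pP^{a_1-1,\ldots,a_n-1,\mathsf{H}_{\mathsf{g}(v)}^{\mathsf{p}(v)}}_{\mathsf{g}(v),n}$ evaluated at the genus $0$ data, hence a polynomial in $1/\lann 1,1,1\rann^{\mathsf{p}(v),0+}_{0,3}$ and the correlators $\lann 1,\ldots,1\rann^{\mathsf{p}(v),0+}_{0,n}|_{t_0=0}$ with coefficients in $\CC(\lambda_0,\dots,\lambda_4)$; the evaluation \eqref{fxxf}, Corollary \ref{Poly}, and Proposition \ref{RPoly} then place these inputs in $\CC[L^{\pm1}][\lambda_i^{-1}]$, and the claim follows immediately. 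Your ``crucial second step,'' however, is both unnecessary and rests on a false premise. It is unnecessary because the target ring is $\CC(\lambda_0,\dots,\lambda_4)[L^{\pm1}]$, not $\CC[L^{\pm1}]$: the factors $\lambda_i^{-1}$ coming from Corollary \ref{Poly} and the weight dependence of the Hodge class $\mathsf{H}^{p_i}_g$ are simply absorbed into the coefficient field, so there is nothing to cancel. The premise is false because the degree-$0$ homogeneity of \eqref{fredfredfred} is a statement about the full localization sum over all graphs; an individual vertex contribution $\text{Cont}^{\mathsf{A}}_\Gamma(v)$ carries a nonzero net weight in general, which is only compensated by the edge factors and the other vertices of $\Gamma$, so the proposed weight-tracking argument would not go through as stated. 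Deleting the second step and keeping the first gives precisely the paper's argument.
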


\begin{proof} By Proposition \ref{VE}, 
$$\text{Cont}^{\mathsf{A}}_\Gamma (v) = 
    \ppl
\psi_1^{a_1-1}, \ldots,
\psi_n^{a_n-1}
\, \Big|\, \mathsf{H}_{\mathsf{g}(v)}^{\mathsf{p}(v)}\,
  \ppr_{\mathsf{g}(v),n}^{\mathsf{p}(v),0+}\, .$$
 The right side of the above
  formula is a polynomial 
  in the variables
% \begin{align*}
%    Cont^I_\Gamma(v)=H^{g_v,p_v}[\psi^{a_1-1},\psi%^{a_2-1},...,\psi^{a_n-1}]^{p_v,0+}_{g_v,n}, 
% \end{align*}
%$[\psi^{a_1-1},\psi^{a_2-1},...,\psi^{a_n-1}]^{p_v,0+}_{g_v,n}$ is polynomial
$$\frac{1}{\lann 1,1,1\rann^{\mathsf{p}(v),0+}_{0,3}}\ \ \ 
\text{and} \ \ \
 \Big\{ \, \lann 1,\ldots,1\rann^{\mathsf{p}(v),0+}_{0,n}\, |_{t_0=0} \, \Big\}_{n\geq  3}\, $$
 with coefficients in $\mathbb{C}(\lambda_0,\dots,\lambda_4)$.
%<1,1,...,1>^{0+,p_i}_{0,n}|_{t_0=0} : n \ge 3\} 
The Lemma then follows from 
the evaluation \eqref{fxxf}, Corollary \ref{Poly},
and 
Proposition \ref{RPoly}.

Both the positive and the negative powers of ${\lann 1,1,1\rann^{\mathsf{p}(v),0+}_{0,3}}$
are required here, since $R_0^{\pm1}$ occurs in Corollary \ref{Poly}.
\end{proof}

Let $e\in \mathsf{E}$ be an edge connecting the $\T$-fixed points $p_i, p_j \in \PP^4$. Let
the $\mathsf{A}$-values of the respective
half-edges be $(k,l)$.

\begin{Lemma}\label{L2} We have
 $\text{\em Cont}^{\mathsf{A}}_\Gamma(e) \in \CC(\lambda_0,\dots,\lambda_4)[L^{\pm1},\mathcal{X},\mathcal{X}_1,\mathcal{X}_2,\mathcal{Y}]$ and
 %[\lambda_0,\lambda_1,\lambda_2,\lambda_0^{-1},\lambda_1^{-1},\lambda_2^{-1}] and
 \begin{enumerate}
 \item[$\bullet$]
 the degree of $\text{\em Cont}^{\mathsf{A}}_\Gamma(e)$ with respect to $\mathcal{Y}$ is $1$,
 \item[$\bullet$]
 the coefficient of $\mathcal{Y}$ in 
 $\text{\em Cont}^{\mathsf{A}}_\Gamma(e)$
 is 
 $$(-1)^{k+l+1}\frac{R_{1\, k-1} R_{1\, l-1}}{5 L^3 \lambda_i^{k-2} \lambda_j^{l-2}}\, .$$ 
% where $(k,l)$ is the component of $I$ coresponding %to $e$.
 \end{enumerate}
\end{Lemma}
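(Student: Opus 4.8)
The plan is to start from the edge contribution of Proposition \ref{VE} and reduce the bracket $e_i(\overline{\mathds{V}}_{ij}-\tfrac{\delta_{ij}}{e_i(x_1+x_2)})e_j$ to an explicit double series in the Zagier--Zinger functions $R_{bm}$ by means of the basic relation \eqref{wdvv} (restricted to $t=0$). Writing $\phi^a=e_a\phi_a$ and expanding each idempotent $\phi_a$ in the power basis via $H^b=\sum_a\lambda_a^b\phi_a$, the relation \eqref{wdvv} becomes
\begin{equation*}
e_i\overline{\mathds{V}}_{ij}e_j=\frac{1}{x_1+x_2}\sum_{b,c}M_{bc}\,\overline{\mathds{S}}_i(H^b)|_{z=x_1}\,\overline{\mathds{S}}_j(H^c)|_{z=x_2}\,,\qquad M_{bc}=\sum_a e_a(V^{-1})_{ab}(V^{-1})_{ac}\,,
\end{equation*}
where $V_{ba}=\lambda_a^b$ is the Vandermonde matrix. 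First I would insert the specialization \eqref{spez}, $\lambda_a=\zeta^a$. Then $e_a=\zeta^{-2a}$ and $(V^{-1})_{ab}=\tfrac15\zeta^{-ab}$, so the orthogonality of fifth roots of unity collapses $M_{bc}$ to $\tfrac15$ when $b+c\equiv 3\ (\mathrm{mod}\ 5)$ and to $0$ otherwise. Thus only the pairs $(b,c)\in\{(0,3),(1,2),(2,1),(3,0),(4,4)\}$ survive.

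Next I would substitute the asymptotic expansions \eqref{VS}. The exponential prefactors $e^{\mu\lambda_i/z}$ cancel exactly against the factors $e^{-\lann1,1\rann^{p_i,0+}_{0,2}/x_1}$ and $e^{-\lann1,1\rann^{p_j,0+}_{0,2}/x_2}$ in the edge formula, since $\lann1,1\rann^{p_i,0+}_{0,2}=\mu\lambda_i$. The decisive simplification is that for every surviving pair the product of $C$-prefactors $\tfrac{L^b\lambda_i^b}{C_0\cdots C_b}\cdot\tfrac{L^c\lambda_j^c}{C_0\cdots C_c}$ reduces to $\tfrac{\lambda_i^b\lambda_j^c}{L^2}$: using $C_a=C_{4-a}$ and $C_0C_1C_2C_3C_4=L^5$ one checks $(C_0\cdots C_b)(C_0\cdots C_c)=L^{b+c+2}$ whenever $b+c\equiv 3$. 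Hence all $C$-dependence disappears, and the bracket becomes $\tfrac{1}{5L^2(x_1+x_2)}\sum R_{bm}R_{cn}\lambda_i^{b-m}\lambda_j^{c-n}x_1^mx_2^n$. Membership in $\CC(\lambda_0,\dots,\lambda_4)[L^{\pm1},\mathcal{X},\mathcal{X}_1,\mathcal{X}_2,\mathcal{Y}]$ then follows at once from Proposition \ref{RPoly2}, which places every $R_{bm}$ in this ring.

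For the two refined assertions I would track the $\mathcal{Y}$-dependence through Proposition \ref{RPoly2}: $\mathcal{Y}$ enters only through $R_{2m}=Q_{2m}-\tfrac{R_{1\,m-1}}{L}\mathcal{Y}$, while $R_{0m},R_{1m},R_{3m},R_{4m}$ are $\mathcal{Y}$-free. Among the surviving pairs, the class $H^2$ occurs only in $(1,2)$ and $(2,1)$, always paired with the $\mathcal{Y}$-free $H^1$; since $2+2\not\equiv3$ there is no $(2,2)$ term, so no $\mathcal{Y}^2$ can arise and the $\mathcal{Y}$-degree is exactly $1$. To extract the coefficient I would collect the $\mathcal{Y}$-parts of $(1,2)$ and $(2,1)$, substitute $R_{2n}\mapsto-\tfrac{R_{1\,n-1}}{L}$, and reindex; the two contributions combine into $-\tfrac{(x_1+x_2)}{L}\sum_{m,n}R_{1m}R_{1n}\lambda_i^{1-m}\lambda_j^{1-n}x_1^mx_2^n$, whose factor $x_1+x_2$ cancels the denominator. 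Reading off the coefficient of $x_1^{k-1}x_2^{l-1}$ and multiplying by the sign $(-1)^{k+l}$ of Proposition \ref{VE} yields exactly $(-1)^{k+l+1}\tfrac{R_{1\,k-1}R_{1\,l-1}}{5L^3\lambda_i^{k-2}\lambda_j^{l-2}}$.

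The hardest part is the bookkeeping that makes the $C$-prefactors collapse uniformly to $\tfrac{\lambda_i^b\lambda_j^c}{L^2}$ and, in the $\mathcal{Y}$-coefficient, the recombination of the $(1,2)$ and $(2,1)$ contributions into a multiple of $x_1+x_2$; both rely essentially on the Calabi--Yau relations $C_a=C_{4-a}$ and $\prod_a C_a=L^5$ together with the root-of-unity form of $M_{bc}$. A minor point still to dispatch is the degenerate term $\tfrac{\delta_{ij}}{e_i(x_1+x_2)}$ in the self-edge case $i=j$: it is a pure rational function free of $\mathcal{Y}$, so it affects neither the $\mathcal{Y}$-degree nor the $\mathcal{Y}$-coefficient and only adjusts the $\mathcal{Y}$-free part already covered by the membership statement.
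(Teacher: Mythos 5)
Your proposal is correct and follows essentially the same route as the paper's proof: starting from Proposition \ref{VE}, rewriting the edge bracket via the relation \eqref{wdvv}, substituting the asymptotic expansions \eqref{VS}, and concluding from Proposition \ref{RPoly2}. The paper compresses the final step into one sentence, whereas you make explicit the change of basis from $\phi_r,\phi^r$ to powers of $H$, the root-of-unity selection rule $b+c\equiv 3\ (\mathrm{mod}\ 5)$, and the cancellation of the $C$-prefactors via $C_a=C_{4-a}$ and $\prod_a C_a=L^5$ --- all of which check out and are exactly the bookkeeping the paper's terse proof leaves to the reader.
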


\begin{proof}
 By Proposition \ref{VE}, 
$$\text{Cont}^{\mathsf{A}}_\Gamma (e) = 
    (-1)^{k+l}\left[e^{-\frac{\mu \lambda_i}{x}-\frac{\mu \lambda_j}{y}}e_i\left(\overline{\mathds{V}}_{ij}-\frac{\delta_{ij}}{e_i(x+y)}\right)e_j
    \right]_{x^{k-1} y^{l-1}}\, .$$
 Using also the equation
 \begin{align*}
     e_i \overline{\mathds{V}}_{ij} (x, y) e_j  = 
\frac{\sum _{r=0}^4 \overline{\mathds{S}}_i (\phi_r)|_{z=x} \, \overline{\mathds{S}}_j (\phi ^r )|_{z=y}}{x+ y}\, ,
 \end{align*}
we write $\text{Cont}^{\mathsf{A}}_\Gamma (e)$
as
 \begin{align*}
\left[(-1)^{k+l} e^{-\frac{\mu \lambda_i}{x}-\frac{\mu \lambda_j}{y}}\sum_{r=0}^4\overline{\mathds{S}}_i(\phi_r)|_{z=x}\, \overline{\mathds{S}}_j(\phi^r)|_{z=y}
\right]_{x^{k}y^{l-1}-x^{k+1}y^{l-2}+
%x^{b_1+3}y^{b_2-2}-....
\ldots +(-1)^{k-1} x^{k+l-1}}
\end{align*}
where the subscript signifies a (signed) sum
of the respective coefficients.
If we substitute the asymptotic expansions \eqref{VS} for
$$\overline{\mathds{S}}_i(1)\, , \ \
\overline{\mathds{S}}_i(H)\, , \ \
\overline{\mathds{S}}_i(H^2)\,  , \ \
\overline{\mathds{S}}_i(H^3)\,  , \ \ 
\overline{\mathds{S}}_i(H^4)
$$ in the above expression, the Lemma follows from Proposition \ref{RPoly2}.
\end{proof}

Similarly, we obtain the following result using Proposition \ref{RPoly3}.
\begin{Lemma}\label{L3}
 We have $\text{\em Cont}^{\mathsf{A}}_{\Gamma}(e) \hspace{-1pt}\in  \CC(\lambda_0,\ldots,\lambda_4)[L^{\pm 1},\mathcal{X},A_2,A_4,A_6]$ and 
 \begin{itemize}
     \item [$\bullet$] the degree of $\text{\em Cont}^{\mathsf{A}}_{\Gamma}$ with respect to $\mathcal{X}$ is 1,
     \item [$\bullet$] the coefficient of $\mathcal{X}$ in $\text{\em Cont}^{\mathsf{A}}_{\Gamma}$ is
     $$(-1)^{k+l+1}\left(\frac{R_{0\, k-1} R_{2\, l-1}}{5 L^3 \lambda_i^{k-1} \lambda_j^{l-3}}+\frac{R_{2\, k-1} R_{0\, l-1}}{5 L^3 \lambda_i^{k-3} \lambda_j^{l-1}}\right)\,.$$
 \end{itemize}
\end{Lemma}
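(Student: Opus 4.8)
The plan is to run the argument of Lemma \ref{L2} essentially verbatim, feeding in Proposition \ref{RPoly3} in place of Proposition \ref{RPoly2}. First I would rewrite the edge term exactly as there, starting from
\[
\text{Cont}^{\mathsf{A}}_\Gamma (e) = (-1)^{k+l}\left[e^{-\frac{\mu\lambda_i}{x}-\frac{\mu\lambda_j}{y}}\,e_i\left(\overline{\mathds{V}}_{ij}-\frac{\delta_{ij}}{e_i(x+y)}\right)e_j\right]_{x^{k-1}y^{l-1}},
\]
substituting the factorization \eqref{wdvv} restricted to $t=0$ and cancelling the exponential prefactors against the factors $e^{\mu\lambda_i/z}$ in the asymptotic expansions \eqref{VS}. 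This presents $\text{Cont}^{\mathsf{A}}_\Gamma(e)$ as a signed coefficient-extraction of the double series $\sum_{r}\overline{\mathds{S}}_i(\phi_r)|_{z=x}\,\overline{\mathds{S}}_j(\phi^r)|_{z=y}$.

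Next I would insert the explicit series \eqref{VS} for each $\overline{\mathds{S}}_i(H^m)$, so that every coefficient of the resulting double series in $x$ and $y$ becomes a polynomial, over $\CC(\lambda_0,\ldots,\lambda_4)[L^{\pm1}]$ (using $C_0C_1C_2C_3C_4=L^5$ and $C_i=C_{4-i}$), in the entries $R_{mk}$. At this point Proposition \ref{RPoly3} supplies the whole conclusion: $R_{0k},R_{2k},R_{4k}\in\CC[L^{\pm1}][A_2,A_4,A_6]$ carry no $\mathcal{X}$, whereas $R_{1k}$ and $R_{3k}$ are each linear in $\mathcal{X}$ with leading coefficients $-R_{0\,k-1}/L$ and $-R_{2\,k-1}/L$. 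Membership $\text{Cont}^{\mathsf{A}}_\Gamma(e)\in\CC(\lambda_0,\ldots,\lambda_4)[L^{\pm1},\mathcal{X},A_2,A_4,A_6]$ is then immediate.

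The two quantitative assertions are the real content. For the degree bound I would note that $\mathcal{X}$ enters only through $R_{1k}$ and $R_{3k}$, so the only a priori quadratic contributions are those in which both $\overline{\mathds{S}}$-factors supply an $R_1$- or $R_3$-coefficient; I expect these to cancel by exactly the mechanism that forces degree $1$ in $\mathcal{Y}$ in Lemma \ref{L2}, namely the pairing structure $\sum_r\phi_r\otimes\phi^r$ of \eqref{wdvv} together with the relations among the $C_i$. Granting linearity, the coefficient of $\mathcal{X}$ is obtained by retaining exactly one $\mathcal{X}$-linear factor; the signed extraction in $x,y$ and the index shifts built into \eqref{VS} then assemble it into $(-1)^{k+l+1}\big(\frac{R_{0\,k-1}R_{2\,l-1}}{5L^3\lambda_i^{k-1}\lambda_j^{l-3}}+\frac{R_{2\,k-1}R_{0\,l-1}}{5L^3\lambda_i^{k-3}\lambda_j^{l-1}}\big)$, which is symmetric under interchanging the two half-edges as it must be.

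The main obstacle is bookkeeping rather than conceptual: matching the signed coefficient-extraction in $x$ and $y$ with the $k$-index shifts of the $R_{mk}$, and confirming that the quadratic-in-$\mathcal{X}$ terms cancel. Since every ingredient is explicit, I would arrange the computation so that this cancellation and the linear coefficient are read off together from the $R_1$--$R_3$ and $R_0$--$R_2$ pairings, in direct parallel with the proof of Lemma \ref{L2}.
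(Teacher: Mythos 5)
Your proposal is correct and is essentially the paper's own proof: the paper disposes of Lemma \ref{L3} in a single line by rerunning the argument of Lemma \ref{L2} with Proposition \ref{RPoly3} substituted for Proposition \ref{RPoly2}, which is exactly your plan. The one refinement worth recording is that the quadratic-in-$\mathcal{X}$ terms you expect to cancel in fact never arise: under the diagonal splitting in \eqref{wdvv} (with $C_i=C_{4-i}$ and $C_0C_1C_2C_3C_4=L^5$) the factor $\overline{\mathds{S}}(H)$ is only ever paired with $\overline{\mathds{S}}(H^2)$ and $\overline{\mathds{S}}(H^3)$ only with $\overline{\mathds{S}}(1)$, so each $\mathcal{X}$-carrying coefficient $R_{1\bullet}$ or $R_{3\bullet}$ is always multiplied by an $\mathcal{X}$-free one among $R_{0\bullet},R_{2\bullet},R_{4\bullet}$, and linearity in $\mathcal{X}$ is automatic.
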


\subsection{Legs}
Using the contribution formula of Proposition \ref{VEL},
\begin{eqnarray*}
     \text{Cont}^{\mathsf{A}}_\Gamma(l)
    &=&
    (-1)^{\mathsf{A}(l)-1}\left[e^{-\frac{\lann1,1\rann^{\mathsf{p}(l),0+}_{0,2}}{z}}
       \overline{\mathds{S}}_{\mathsf{p}(l)}(H^{k_l})\right]_{z^{\mathsf{A}(l)-1}}\, ,
 \end{eqnarray*}
 we easily conclude
 
\begin{enumerate}
 \item[$\bullet$]
 when the insertion at the marking $l$ is $H^0$,$$C_0\cdot \text{Cont}^{\mathsf{A}}_\Gamma(l)\in
\CC(\lambda_0,\dots,\lambda_4)[L^{\pm1}]\, ,$$
 \item[$\bullet$]
 when the insertion at the marking $l$ is $H^1$,
 $$C_0 C_1 \cdot \text{Cont}^{\mathsf{A}}_\Gamma(l)\in
\CC(\lambda_0,\dots,\lambda_4)[L^{\pm1},\mathcal{X}]\, ,$$
\item[$\bullet$]
 when the insertion at the marking $l$ is $H^2$,
 $$C_0C_1C_2 \cdot \text{Cont}^{\mathsf{A}}_\Gamma(l)\in
\CC(\lambda_0,\dots,\lambda_4)[L^{\pm1},\mathcal{X},\mathcal{X}_1,\mathcal{Y}]\, ,$$
\item[$\bullet$]
 when the insertion at the marking $l$ is $H^3$,
 $$C_0C_1C_2C_3 \cdot \text{Cont}^{\mathsf{A}}_\Gamma(l)\in
\CC(\lambda_0,\dots,\lambda_4)[L^{\pm1},\mathcal{X},\mathcal{X}_1,\mathcal{X}_2]\, ,$$
\item[$\bullet$]
 when the insertion at the marking $l$ is $H^4$,
 $$C_0C_1C_2C_3C_4 \cdot \text{Cont}^{\mathsf{A}}_\Gamma(l)\in
\CC(\lambda_0,\dots,\lambda_4)[L^{\pm1},\mathcal{X},\mathcal{X}_1,\mathcal{X}_2]\,.$$
\end{enumerate}

\section{Holomorphic anomaly for the formal quintic}
\label{hafp}

\subsection{Proof of Theorem \ref{ooo5}}

By definition, we have
\begin{align*}\label{ffww}
K_2(q)=& -\frac{1}{L^5}\mathcal{X}\, , \\
A_2(q)=& \frac{1}{L^5}\left(-\frac{1}{5}\mathcal{Y}-\frac{2}{5}\mathcal{X}-\frac{3}{25}\right)\, , \\
A_4(q)=& \frac{1}{L^{10}}\left(-\frac{1}{25}\mathcal{X}^2-\frac{1}{25}\mathcal{X}\mathcal{Y}+\frac{1}{25}\mathcal{X}_1+\frac{2}{25^2} \right)\, , \\
A_6(q)=& \frac{1}{10 \cdot 5^5 L^{15}}\Big(4+125\mathcal{X}_1+50\mathcal{X}(1+10\mathcal{X}_1)\\
&-5L^5(1+10\mathcal{X}+25\mathcal{X}^2+25\mathcal{X}_1)+125\mathcal{X}_2-125\mathcal{X}^2(\mathcal{Y}-1)\Big)\,.
\end{align*}
Hence, statement (i),
$$\widetilde{\mathcal{F}}_g^{\mathsf{B}} (q) \in \mathbb{C}[L^{\pm1}][C_0^{\pm 1},K_2,A_2,A_4,A_6]\, ,$$
follows from Proposition \ref{RPoly2}, \ref{RPoly3}, \ref{VE}
and  Lemmas \ref{L1} - \ref{L3}.

Since 
$$\frac{\partial}{\partial T} = \frac{q}{C_1}\frac{ \partial}{\partial q}\,, $$
statement (ii),
\begin{equation}\label{vvtt}
\frac{\partial^k \widetilde{\mathcal{F}}_g^{\mathsf{B}}}{\partial T^k}(q) \in \mathbb{C}[L^{\pm1}][C_0^{\pm 1},C_1^{-1},K_2,A_2,A_4,A_6]\, ,
\end{equation}
follows since the ring
$$\mathbb{C}[L^{\pm1}][C_0^{\pm 1},C_1^{-1},K_2,A_2,A_4,A_6]=\mathbb{C}[L^{\pm1}][C_0^{\pm 1},C_1^{-1},\mathcal{X},\mathcal{X}_1,\mathcal{X}_2,\mathcal{Y}]$$
is closed under the action of the differential operator $$\DD=q\frac{\partial}{\partial q}\, $$
by \eqref{drule}.
The degree of $C_1^{-1}$ in \eqref{vvtt}
is $1$ which yields statement (iii).
\qed

\begin{Rmk}\label{C0C1}
 {\em The proof of Theorem \ref{ooo5}  
 actually yields:}
 $$C_0^{2-2g}\cdot \widetilde{\mathcal{F}}^{\mathsf{B}}_g\,, \ \ C_0^{2-2g} C_1^k \cdot \frac{\partial^k \widetilde{\mathcal{F}}^{\mathsf{B}}_g}
 {\partial T^k} \,\in\, \CC[L^{\pm 1}][A_2,A_4,A_6,K_2]\,.$$
\end{Rmk}

\subsection{Proof of Theorem \ref{ttt5}: first equation}
\label{prttt}

Let $\Gamma \in \mathsf{G}_{g}(\PP^4)$ be a decorated graph. Let us fix an edge $f\in\mathsf{E}(\Gamma)$:
\begin{enumerate}
\item[$\bullet$] if $\Gamma$ is connected after 
deleting $f$, denote the resulting graph by $$\Gamma^0_f\in \mathsf{G}_{g-1,2}(\PP^4)\, ,$$
\item[$\bullet \bullet$] if $\Gamma$ is disconnected after deleting $f$, denote the resulting two graphs by $$\Gamma^1_f\in \mathsf{G}_{g_1,1}(\PP^4) \ \ \ 
\text{and}\ \ \  \Gamma^2_f\in \mathsf{G}_{g_2,1}(\PP^4)$$
where $g=g_1+g_2$.
\end{enumerate}
There is no canonical
order for the 2 new markings. 
We will always sum over the 2 labellings. So more precisely, the graph
$\Gamma^0_f$ in case $\bullet$
should be viewed as sum
of 2 graphs
$$\Gamma^0_{f,(1,2)} +
\Gamma^0_{f,(2,1)}\, .$$
Similarly, in case $\bullet\bullet$,
we will sum over the ordering of $g_1$ and $g_2$. As usually, the summation
will be later compensated by a factor of
$\frac{1}{2}$ in the formulas.

By Proposition \ref{VE}, we have
the following formula for the contribution 
of the graph $\Gamma$ to the 
theory of the formal quintic,
 $$
 \text{Cont}_\Gamma
     =\frac{1}{|\text{Aut}(\Gamma)|}
     \sum_{\mathsf{A} \in \ZZ_{\ge 0}^{\mathsf{F}}} \prod_{v\in \mathsf{V}} 
     \text{Cont}^{\mathsf{A}}_\Gamma (v)
     \prod_{e\in \mathsf{E}} \text{Cont}^{\mathsf{A}}_\Gamma(e)\, .
 $$

%Recall the following decomposition of $\sum_{d} %{[Q_{g, 0} (X, d)]^{\vir}_\Gamma q^d}$ into vertex %term and edge term
%\begin{align*}
%    &[\sum_d q^d \overline{Q}_{g}(K\PP^2,d)]^{\vir}_{\Gamma}=\\
%     &\frac{1}{Aut(\Gamma)}\sum_{I \in \ZZ_{\ge 0}^{\Gamma(F)}} \prod_v Cont^I_\Gamma (v)\prod _e Cont^I_\Gamma(e).
%\end{align*}

Let $f$ connect the $\T$-fixed points $p_i, p_j \in \PP^4$. Let
the $\mathsf{A}$-values of the respective
half-edges be $(k,l)$. By Lemma \ref{L2}, we have
\begin{equation}\label{Coeff}
\frac{\partial \text{Cont}^{\mathsf{A}}_\Gamma(f)}{\partial \mathcal{Y}} =(-1)^{k+l+1}\frac{R_{1\, k-1} R_{1\, l-1}}{5 L^3 \lambda_i^{k-2} \lambda_j^{l-2}}\, .
\end{equation}

\noindent $\bullet$ If $\Gamma$ is connected after deleting $f$, we have
%by Lemma \ref{L2}, we obtain
\begin{multline*}
%\frac{1}{\text{Aut}(\Gamma)}&\sum_{I \in \ZZ^{\Gamma(F)}} \frac{d Cont^I_\Gamma(f)}{dX} \prod_v Cont^I_\Gamma (v)\prod_{e \ne f}{Cont^I_\Gamma(e)} = 
\frac{1}{|\text{Aut}(\Gamma)|}
     \sum_{\mathsf{A} \in \ZZ_{\ge 0}^{\mathsf{F}}} 
     \left(-\frac{5L^5}{C_0^2C^2_1}\right)
     \frac{\partial {\text{Cont}}^{\mathsf{A}}_\Gamma(f)}{\partial \mathcal{Y}} 
     \prod_{v\in \mathsf{V}} 
     \text{Cont}^{\mathsf{A}}_\Gamma (v)
     \prod_{e\in \mathsf{E},\, e\neq f} \text{Cont}^{\mathsf{A}}_\Gamma(e) \\=
\text{Cont}_{\Gamma^0_f}(H,H) \, .
\end{multline*}
The derivation is simply by using \eqref{Coeff} on the left
and Proposition \ref{VEL} on the right.
%Here, $\text{Cont}_{\Gamma^0_f}$ is the full contribution 
%of $\Gamma^0_f$ to 
%$$\sum_{d\geq 0} q^d %\left[\overline{Q}_{g-1,2}(K\PP^2,d)\right]^{\text{vir}}\, .$$

\vspace{5pt}
\noindent $\bullet\bullet$
If $\Gamma$ is disconnected after deleting $f$, we obtain
\begin{multline*}
\frac{1}{|\text{Aut}(\Gamma)|}
     \sum_{\mathsf{A} \in \ZZ_{\ge 0}^{\mathsf{F}}} 
     \left(-\frac{5L^5}{C_0^2C^2_1}\right)
     \frac{\partial {\text{Cont}}^{\mathsf{A}}_\Gamma(f)}{\partial \mathcal{Y}} 
     \prod_{v\in \mathsf{V}} 
     \text{Cont}^{\mathsf{A}}_\Gamma (v)
     \prod_{e\in \mathsf{E},\, e\neq f} \text{Cont}^{\mathsf{A}}_\Gamma(e)\\
=\text{Cont}_{\Gamma^1_f}(H) \,
\text{Cont}_{\Gamma^2_f}(H)\, 
\end{multline*}
by the same method. 
%The sum here is over all distributions
%of 
%where $\text{Cont}_{\Gamma^1_f}$ and $\text{Cont}_{\Gamma^2_f}$ 
%are the full contribution 
%of $\Gamma^1_f$ and $\Gamma^2_f$
%to 
%$$\sum_{d\geq 0} q^d %\left[\overline{Q}_{g_1,1}(K\PP^2,d)\right]^{\text{vir}}
%\ \ \ \text{and} \ \ \
%\sum_{d\geq 0} q^d %\left[\overline{Q}_{g_2,1}(K\PP^2,d)\right]^{\text{vir}}
%\, $$
%r%espectively.

By combining the above two equations for all 
the edges of all the graphs $\Gamma\in \mathsf{G}_g(\PP^4)$
and using the vanishing
\begin{align*}
\frac{\partial {\text{Cont}}^{\mathsf{A}}_\Gamma(v)}{\partial \mathcal{Y}}=0
\end{align*}
of Lemma \ref{L1}, we obtain
%Since
%\begin{align*}
% \frac{\partial \prod_e Cont^I_\Gamma(e)}{\partial X}=\frac{\partial Cont^I_\Gamma(f)}{\partial X}\prod_{e \ne f} Cont^I_\Gamma(e)+ othercombinations,
%\end{align*}
%combining above two equations for all possible edges, we conclude
\begin{multline}\label{greww}
\left(-\frac{L^5}{5C_0^2C^2_1}\right) \frac{\partial}{\partial \mathcal{Y}} 
 \lan  \ran^{\mathsf{SQ}}_{g,0}= \frac{1}{2}\sum_{i=1}^{g-1} \lan H\ran^{\mathsf{SQ}}_{g-i,1}
\lan H \ran^{\mathsf{SQ}}_{i,1} + \frac{1}{2} \lan H,H\ran^{\mathsf{SQ}}_{g-1,2}\, .
\end{multline}
We have followed here the notation of Section \ref{holp}. 

By definition of $A_2, A_4, A_6$, we have following equations.
\begin{align*}
    &\left(\frac{1}{C_1^2}\frac{\partial}{\partial A_2}-\frac{K_2}{5C_1^2}\frac{\partial}{\partial A_4}+\frac{K_2^2}{50C_1^2}\frac{\partial}{\partial A_6}\right)\mathcal{Y}=-5L^5\,,\\
    &\left(\frac{1}{C_1^2}\frac{\partial}{\partial A_2}-\frac{K_2}{5C_1^2}\frac{\partial}{\partial A_4}+\frac{K_2^2}{50C_1^2}\frac{\partial}{\partial A_6}\right)\mathcal{X}_1=0\,,\\
    &\left(\frac{1}{C_1^2}\frac{\partial}{\partial A_2}-\frac{K_2}{5C_1^2}\frac{\partial}{\partial A_4}+\frac{K_2^2}{50C_1^2}\frac{\partial}{\partial A_6}\right)\mathcal{X}_2=0\,.
\end{align*}
Since $I_0^{2g-2}\lan\ran^{\mathsf{SQ}}_g=\widetilde{\mathcal{F}}_g^{\mathsf{B}}$, the left side of \eqref{greww} after multiplication by $I_0^{2g-2}$  is, by the chain rule,
$$\frac{1}{C_0^2C_1^2}\frac{\partial \widetilde{\mathcal{F}}^{\mathsf{B}}_g}{\partial A_2}-\frac{K_2}{5C_0^2C_1^2}\frac{\partial \widetilde{\mathcal{F}}^{\mathsf{B}}_g}{\partial A_4}+\frac{K_2^2}{50C_0^2C_1^2}\frac{\partial \widetilde{\mathcal{F}}^{\mathsf{B}}_g}{\partial A_6}\in \CC[L^{\pm 1}][C_0^{\pm 1},C_1^{-1},K_2,A_2,A_4,A_6]\,.$$

On the right side of \eqref{greww}, we have 
$$ I_0^{2(g-i)-2+1}\lan H  \ran^{\mathsf{SQ}}_{g-i,1}\, =\, \widetilde{\mathcal{F}}_{g-i,1}^{\mathsf{B}}(q)\, =\,  \widetilde{\mathcal{F}}^{\mathsf{GW}}_{g-i,1}(Q(q))\, ,$$
where the first equality is by definition and the second is by
wall-crossing \eqref{jjed}. Then,
$$\widetilde{\mathcal{F}}^{\mathsf{GW}}_{g-i,1}(Q(q))\ = \ \frac{\partial \widetilde{\mathcal{F}}^{\mathsf{GW}}_{g-i}}{\partial T}(Q(q)) \ =\ 
\frac{\partial\widetilde{\mathcal{F}}^{\mathsf{B}}_{g-i}}{\partial T}(q) 
$$
where the first equality is by the divisor equation in
Gromov-Witten theory and the second is again by wall-crossing
\eqref{jjed}. So we conclude
\begin{equation}\label{aa234}
 I_0^{2(g-i)-2+1}\lan H  \ran^{\mathsf{SQ}}_{g-i,1} =\frac{\partial\widetilde{\mathcal{F}}^{\mathsf{B}}_{g-i}}{\partial T}(q)\, \in \mathbb{C}[[q]]\, .
 \end{equation}
Similarly, we obtain
\begin{eqnarray}\label{bb234}
 I_0^{2(g-i)-2+1} \lan H  \ran^{\mathsf{SQ}}_{i,1} &=&\frac{\partial\widetilde{\mathcal{F}}^{\mathsf{B}}_{i}}{\partial T}(q)\, 
\, \in \mathbb{C}[[q]]\, ,
\\ \label{cc234}
 I_0^{2(g-1)-2+2} \lan H,H  \ran^{\mathsf{SQ}}_{g-1,2} &=&\frac{\partial^2\widetilde{\mathcal{F}}^{\mathsf{B}}_{g-1}}{\partial T^2}(q)\,
\, \in \mathbb{C}[[q]]\, .
\end{eqnarray}
The above equations transform \eqref{greww}, after multiplication by $I_0^{2g-2}$, to 
exactly the first holomorphic anomaly equation of Theorem \ref{ttt5},
$$\frac{1}{C_0^2C_1^2}\frac{\partial \widetilde{\mathcal{F}}_g^{\mathsf{B}}}{\partial{A_2}}-\frac{1}{5C_0^2C_1^2}\frac{\partial \widetilde{\mathcal{F}}_g^{\mathsf{B}}}{\partial{A_4}}K_2+\frac{1}{50C_0^2C_1^2}\frac{\partial \widetilde{\mathcal{F}}_g^{\mathsf{B}}}{\partial{A_6}}K_2^2
= \frac{1}{2}\sum_{i=1}^{g-1} 
\frac{\partial \widetilde{\mathcal{F}}_{g-i}^{\mathsf{B}}}{\partial{T}}
\frac{\partial \widetilde{\mathcal{F}}_i^{\mathsf{B}}}{\partial{T}}
+
\frac{1}{2}
\frac{\partial^2 \widetilde{\mathcal{F}}_{g-1}^{\mathsf{B}}}{\partial{T}^2}\,
$$
as an equality in $\mathbb{C}[[q]]$.

In order to lift
the first
holomorphic anomaly
equation to the ring
$$\mathbb{C}[L^{\pm1}][A_2,A_4,A_6,C_0^{\pm1},C_1^{-1},K_2]\, ,$$
we must lift
the equalities
\eqref{aa234}-\eqref{cc234}. The proof
is identical to 
the parallel lifting
for $K\proj^2$ given
in \cite[Section 7.3]{LP1}.

\subsection{Proof of Theorem \ref{ttt5}: second equation}

%Let $\Gamma \in \mathsf{G}_{g}(\PP^4)$ be a decorated graph. Let us fix an edge $f\in\mathsf{E}(\Gamma)$:
%\begin{enumerate}
%\item[$\bullet$] if $\Gamma$ is connected after 
%deleting $f$, denote the resulting graph by $$\Gamma^0_f\in \mathsf{G}_{g-1,2}(\PP^4)\, ,$$
%\item[$\bullet \bullet$] if $\Gamma$ is disconnected after deleting $f$, denote the resulting two graphs %by $$\Gamma^1_f\in \mathsf{G}_{g_1,1}(\PP^4) \ \ \ 
%\text{and}\ \ \  \Gamma^2_f\in \mathsf{G}_{g_2,1}(\PP^4)$$
%where $g=g_1+g_2$.
%\end{enumerate}
%Note that $$\Gamma'_f \in G_{g-1,2}$$ and
%$$\Gamma^1_f\in G_{g_1,1}, \Gamma^2_f\in %%G_{g_2,1},$$
%for some $g_1,g_2$ such that $g_1+g_2=g$.
%Here we also used same convention for the resulting graphs after deleting $f$ as in Section 6.2.

By Proposition \ref{VE}, we have
the following formula for the contribution 
of the graph $\Gamma$ to the stable quotient
theory of formal quintic,
 $$
 \text{Cont}_\Gamma
     =\frac{1}{|\text{Aut}(\Gamma)|}
     \sum_{\mathsf{A} \in \ZZ_{\ge 0}^{\mathsf{F}}} \prod_{v\in \mathsf{V}} 
     \text{Cont}^{\mathsf{A}}_\Gamma (v)
     \prod_{e\in \mathsf{E}} \text{Cont}^{\mathsf{A}}_\Gamma(e)\, .
 $$

%Recall the following decomposition of $\sum_{d} %{[Q_{g, 0} (X, d)]^{\vir}_\Gamma q^d}$ into vertex %term and edge term
%\begin{align*}
%    &[\sum_d q^d \overline{Q}_{g}(K\PP^2,d)]^{\vir}_{\Gamma}=\\
%     &\frac{1}{Aut(\Gamma)}\sum_{I \in \ZZ_{\ge 0}^{\Gamma(F)}} \prod_v Cont^I_\Gamma (v)\prod _e Cont^I_\Gamma(e).
%\end{align*}

Let $f$ connect the $\T$-fixed points $p_i, p_j \in \PP^4$. Let
the $\mathsf{A}$-values of the respective
half-edges be $(k,l)$. By Lemma \ref{L3}, we have
\begin{equation}\label{Coeff2}
\frac{\partial \text{Cont}^{\mathsf{A}}_\Gamma(f)}{\partial \mathcal{X}} =(-1)^{k+l+1}\left(\frac{R_{0\, k-1} R_{2\, l-1}}{5 L^3 \lambda_i^{k-1} \lambda_j^{l-3}}+\frac{R_{2\, k-1} R_{0\, l-1}}{5 L^3 \lambda_i^{k-3} \lambda_j^{l-1}}\right)\, .
\end{equation}

\noindent $\bullet$ If $\Gamma$ is connected after deleting $f$, we have
%by Lemma \ref{L2}, we obtain
\begin{multline*}
%\frac{1}{\text{Aut}(\Gamma)}&\sum_{I \in \ZZ^{\Gamma(F)}} \frac{d Cont^I_\Gamma(f)}{dX} \prod_v Cont^I_\Gamma (v)\prod_{e \ne f}{Cont^I_\Gamma(e)} = 
\frac{1}{|\text{Aut}(\Gamma)|}
     \sum_{\mathsf{A} \in \ZZ_{\ge 0}^{\mathsf{F}}} 
     \left(-\frac{5L^5}{C^2_1}\right)
     \frac{\partial {\text{Cont}}^{\mathsf{A}}_\Gamma(f)}{\partial \mathcal{X}} 
     \prod_{v\in \mathsf{V}} 
     \text{Cont}^{\mathsf{A}}_\Gamma (v)
     \prod_{e\in \mathsf{E},\, e\neq f} \text{Cont}^{\mathsf{A}}_\Gamma(e) \\=
\text{Cont}_{\Gamma^0_f}(1,H^2)\, .
\end{multline*}
The derivation is simply by using \eqref{Coeff2} on the left
and Proposition \ref{VEL} on the right.
%Here, $\text{Cont}_{\Gamma^0_f}$ is the full contribution 
%of $\Gamma^0_f$ to 
%$$\sum_{d\geq 0} q^d %\left[\overline{Q}_{g-1,2}(K\PP^2,d)\right]^{\text{vir}}\, .$$

\vspace{5pt}
\noindent $\bullet\bullet$
If $\Gamma$ is disconnected after deleting $f$, we obtain
\begin{multline*}
\frac{1}{|\text{Aut}(\Gamma)|}
     \sum_{\mathsf{A} \in \ZZ_{\ge 0}^{\mathsf{F}}} 
     \left(-\frac{5L^5}{C^2_1}\right)
     \frac{\partial {\text{Cont}}^{\mathsf{A}}_\Gamma(f)}{\partial \mathcal{X}} 
     \prod_{v\in \mathsf{V}} 
     \text{Cont}^{\mathsf{A}}_\Gamma (v)
     \prod_{e\in \mathsf{E},\, e\neq f} \text{Cont}^{\mathsf{A}}_\Gamma(e)\\
=\text{Cont}_{\Gamma^1_f}(1) \,
\text{Cont}_{\Gamma^2_f}(H^2)\, 
\end{multline*}
by the same method. 
%The sum here is over all distributions
%of 
%where $\text{Cont}_{\Gamma^1_f}$ and $\text{Cont}_{\Gamma^2_f}$ 
%are the full contribution 
%of $\Gamma^1_f$ and $\Gamma^2_f$
%to 
%$$\sum_{d\geq 0} q^d %\left[\overline{Q}_{g_1,1}(K\PP^2,d)\right]^{\text{vir}}
%\ \ \ \text{and} \ \ \
%\sum_{d\geq 0} q^d %\left[\overline{Q}_{g_2,1}(K\PP^2,d)\right]^{\text{vir}}
%\, $$
%r%espectively.

By combining the above two equations for all 
the edges of all the graphs $\Gamma\in \mathsf{G}_g(\PP^4)$
and using the vanishing
\begin{align*}
\frac{\partial {\text{Cont}}^{\mathsf{A}}_\Gamma(v)}{\partial \mathcal{X}}=0
\end{align*}
of Lemma \ref{L1}, we obtain
%Since
%\begin{align*}
% \frac{\partial \prod_e Cont^I_\Gamma(e)}{\partial X}=\frac{\partial Cont^I_\Gamma(f)}{\partial X}\prod_{e \ne f} Cont^I_\Gamma(e)+ othercombinations,
%\end{align*}
%combining above two equations for all possible edges, we conclude
\begin{multline}\label{greww2}
\left(-\frac{5L^5}{C^2_1}\right) \frac{\partial}{\partial \mathcal{X}} 
 \lan  \ran^{\mathsf{SQ}}_{g,0}= \sum_{i=1}^{g-1} \lan 1\ran^{\mathsf{SQ}}_{g-i,1}
\lan H^2 \ran^{\mathsf{SQ}}_{i,1} + \lan 1,H^2\ran^{\mathsf{SQ}}_{g-1,2} =0\, .
\end{multline}

The second equality in the above equations follow from the string equation for formal stable quotient invariants. Since 
$$K_2=-\frac{1}{L^5}\mathcal{X}\,,$$ the equation \eqref{greww2} is equivalent to the second holomorphic
anomaly equation of Theorem \ref{ttt5} as an equality
in $\com[[q]]$.

In order to lift
the second
holomorphic anomaly
equation to the ring
$$\mathbb{C}[L^{\pm1}][A_2,A_4,A_6,C_0^{\pm1},C_1^{-1},K_2]\, ,$$
we must lift
the equalities
\begin{eqnarray*}
\lan 1\ran^{\mathsf{SQ}}_{g-i,1}& = & 0 \, , \\
\lan 1,H^2\ran^{\mathsf{SQ}}_{g-1,2} & = & 0 \, .
\end{eqnarray*}
 The proof
follows from the properties of the unit $1$
in a CohFT. Specifically, the method
of the proof of \cite[Proposition 2.12]{PPZ}
is used. We leave the details to the reader.

\subsection{Genus one invariants}
We do not study the genus 1 unpointed series $\widetilde{\mathcal{F}}^{\mathsf{B}}_1(q)$ in the paper, so we take
\begin{eqnarray*}
 I_0\cdot\lan H  \ran^{\mathsf{SQ}}_{1,1} &=&\frac{\partial\widetilde{\mathcal{F}}^{\mathsf{B}}_{1}}{\partial T}\, ,\\
 I_0^2\cdot\lan H,H  \ran^{\mathsf{SQ}}_{1,2} &=&\frac{\partial^2\widetilde{\mathcal{F}}^{\mathsf{B}}_{1}}{\partial T^2}\, .
\end{eqnarray*}
as definitions of the right side in the genus 1 case.
There is no difficulty in calculating these series explicitly
using Proposition \ref{VEL},

\begin{eqnarray*}
\frac{\partial \widetilde{\mathcal{F}}_1^{\mathsf{B}}}{\partial{T}} &= & \frac{ L^5}{C_1}\left(\frac{1}{2}A_2+\frac{19}{24}K_2+\frac{1}{12}-\frac{19}{120}\frac{1}{L^5}\right),\\
\frac{\partial^2 \widetilde{\mathcal{F}}_1^{\mathsf{B}}}{\partial{T}^2}& = & \frac{1}{C_1}\mathsf{D}\left(\frac{L^5}{C_1}\left(\frac{1}{2}A_2+\frac{19}{24}K_2+\frac{1}{12}-\frac{19}{120}\frac{1}{L^5}\right)\right)
.
\end{eqnarray*}

\subsection{Bounding the degree.}\label{Btd}
For the holomorphic anomaly equation
for $K\proj^2$, the integration constants
can be bounded \cite[Section 7.5]{LP1}.
A parallel result hold for the formal quintic.

The degrees in $L$ of the term of 
$$\widetilde{\mathcal{F}}^{\mathsf{SQ}}_g\in\CC[L^{\pm 1}][A_2,A_4,A_6,K_2]$$
for formal quintic always fall in the range
\begin{align}\label{DB}
    [15-15g,10g-10]
\end{align}
In particular, the constant (in $A_2,A_4,A_6,K_2$) term of $\widetilde{\mathcal{F}}^{\mathsf{SQ}}_g$ missed by the holomorphic anomaly equation for formal quintic is a Laurent polynomial in $L$ with degree in the range \eqref{DB}. The bound \eqref{DB} is a consequence of Proposition \ref{VE}, the vertex and edge analysis of Section \ref{svel}, and the following result.

\begin{Lemma}
The degrees in $L$ of $R_{ip}$ fall in the range
\begin{align*}
    [-i,4p+1]\,.
\end{align*}
\end{Lemma}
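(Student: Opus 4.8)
The plan is to prove the two bounds packaged in the statement — the upper bound $\deg_L R_{ip}\le 4p+1$ and the lower bound $\ord_L R_{ip}\ge -i$ on the top and bottom powers of $L$ — simultaneously by induction on $p$, treating all $i\in\{0,1,2,3,4\}$ together. Throughout I regard $R_{ip}$ as an element of the free ring $\CC[L^{\pm1}][\mathcal{X},\mathcal{X}_1,\mathcal{X}_2,\mathcal{Y}]$, where Lemma \ref{RR} and Proposition \ref{RPoly2} place it, graded by $\deg_L L=1$ and $\deg_L\mathcal{X}=\deg_L\mathcal{X}_1=\deg_L\mathcal{X}_2=\deg_L\mathcal{Y}=0$; then $\deg_L$ and $\ord_L$ are the top and bottom $L$-degrees of the resulting Laurent polynomial in $L$.

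The engine is the recursion of Lemma \ref{RR} together with one elementary bookkeeping fact about $\DD$. First I would record that, as a derivation of $\CC[L^{\pm1}][\mathcal{X},\mathcal{X}_1,\mathcal{X}_2,\mathcal{Y}]$, $\DD$ raises the top $L$-degree by at most $5$ and never lowers the bottom $L$-degree. Indeed $\DD L=\frac{1}{5}(L^6-L)$ (from $L^{-5}=1-5^5q$) has $\deg_L(\DD L)=\deg_L L+5$ and $\ord_L(\DD L)=\ord_L L$; on the remaining generators $\DD\mathcal{X}=\mathcal{X}_1$ and $\DD\mathcal{X}_1=\mathcal{X}_2$ are of $L$-degree $0$, while $\DD\mathcal{X}_2$ and $\DD\mathcal{Y}$ have top $L$-degree $5$ and bottom $L$-degree $0$ by the relations \eqref{drule} and \eqref{drule2}. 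The Leibniz rule then propagates these bounds to arbitrary elements. Since dividing by $L$ shifts $\deg_L$ and $\ord_L$ down by exactly $1$, this yields the key per-term estimate: if $\deg_L f\le d$ and $\ord_L f\ge e$, then $\frac{\DD f}{L}$ has $\deg_L\le d+4$ and $\ord_L\ge e-1$, and each of $\frac{\mathcal{X}}{L}f$, $\frac{\mathcal{Y}}{L}f$, $\frac{\DD L}{L^2}f$ occurring in Lemma \ref{RR} likewise has $\deg_L\le d+4$ and $\ord_L\ge e-1$ (using $\deg_L\frac{\DD L}{L^2}=4$ and $\ord_L\frac{\DD L}{L^2}=-1$).

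Granting the base case $i=0$ below, the induction step then reads directly off the five relations of Lemma \ref{RR}. Assume $\deg_L R_{jp}\le 4p+1$ and $\ord_L R_{jp}\ge -j$ for all $j$, together with $\deg_L R_{0,p+1}\le 4(p+1)+1$ and $\ord_L R_{0,p+1}\ge 0$. Applying the estimate above to the first relation bounds $R_{1,p+1}$ by $\deg_L\le 4p+5=4(p+1)+1$ and $\ord_L\ge -1$; feeding this into the second relation gives $R_{2,p+1}$ with $\ord_L\ge -2$; and the third and fourth relations give $R_{3,p+1}$ and $R_{4,p+1}$ with $\ord_L\ge -3$ and $\ord_L\ge -4$, each still of top degree $\le 4(p+1)+1$. (The fifth relation of Lemma \ref{RR} is then an automatically satisfied consistency identity and is not needed.) This is exactly the range $[-i,4p+1]$ at level $p+1$, so the induction closes once the seed $i=0$ is supplied at each level.

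The remaining, and genuinely delicate, point is the base case: that $R_p=R_{0p}\in\CC[L^{\pm1}]$ satisfies $\ord_L R_p\ge 0$ and $\deg_L R_p\le 4p+1$. Membership in $\CC[L^{\pm1}]$ is Proposition \ref{RPoly} (Zagier–Zinger), but the degree bound must be extracted from the Picard–Fuchs recursion itself. Here I would conjugate the Picard–Fuchs operator for $\overline{\mathds{I}}|_{H=\lambda_i}$ by $e^{\mu\lambda_i/z}$, using $1+\DD\mu=L$, which replaces $M=H+z\DD$ by $\lambda_i L+z\DD$ and produces a recursion for the $R_p$ in the variable $z/\lambda_i$. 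Under the specialization \eqref{spez} one has $\lambda_i^5=1$, so the no-derivative (diagonal) coefficient of $R_n$ at each order is $\lambda_i^5 L^5-1-\lambda_i^5(L^5-1)=0$; hence each order in $z/\lambda_i$ is a first-order $\DD$-equation determining $R_p$ from $R_{p-1},\dots,R_{p-5}$. Tracking $L$-degrees through this recursion gives the gain of exactly $4$ in top degree per order and nonnegativity of the bottom degree. I expect this Picard–Fuchs degree estimate — in particular controlling the inversion of $\DD$ via $\DD L^m=\frac{m}{5}(L^{m+5}-L^m)$ — to be the main obstacle; the propagation through Lemma \ref{RR} described above is then routine.
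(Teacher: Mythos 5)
Your argument is correct and takes essentially the same route as the paper's own proof, which consists of two sentences --- the bounds for $R_{0p}$ are attributed to the arguments of \cite{ZaZi}, and the bounds for the remaining $R_{ip}$ are propagated through the recursion coming from \eqref{S1}, i.e.\ Lemma \ref{RR} --- and your degree bookkeeping for $\DD$, $L^{-1}$ and $\frac{\DD L}{L^2}$ correctly supplies the propagation step the paper leaves implicit (the only detail you skip, the $p=0$ seeds $R_{i0}=L$ for $i>0$, is immediate from the normalization \eqref{VS}). The Picard--Fuchs estimate placing $\deg_L R_{0p}$ in $[0,4p+1]$, which you flag as the remaining obstacle, is exactly the piece the paper likewise defers to Zagier--Zinger, so your proposal neither diverges from nor falls short of the paper's own level of detail.
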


\begin{proof}
 The proof for the functions $R_{0p}$ follows from the arguments of \cite{ZaZi}. The proof for the other $R_{ip}$ follows from Lemma \ref{S1}.
\end{proof}

\end{document}